\newtheorem{theorem}{Theorem}[section]
\newtheorem*{thm}{Theorem}
\newtheorem*{thmA}{Theorem A}
\newtheorem*{thmB}{Theorem B}
\newtheorem*{thmmain}{Theorem}
\newtheorem{lemma}[theorem]{Lemma}
\newtheorem{proposition}[theorem]{Proposition}
\newtheorem{corollary}[theorem]{Corollary}
\theoremstyle{definition}
\theoremstyle{remark}
\newtheorem{remark}[theorem]{Remark}
\numberwithin{equation}{section}
\begin{document}
\title[The planar Busemann-Petty centroid inequality and its stability]
 {The planar Busemann-Petty centroid inequality and its stability}
\author[M.N. Ivaki]{Mohammad N. Ivaki}
\address{Institut f\"{u}r Diskrete Mathematik und Geometrie, Technische Universit\"{a}t Wien,
Wiedner Hauptstr. 8--10, 1040 Wien, Austria}
\curraddr{}
\email{mohammad.ivaki@tuwien.ac.at}

\subjclass[2010]{Primary 52A40, 53C44, 52A10; Secondary 35K55, 53A15}
\keywords{affine support function, Banach-Mazur distance, centro-affine normal flow, centroid body, geometric evolution equation, Hausdorff metric, Busemann-Petty centroid inequality, stability}

\begin{abstract}
In [Centro-affine invariants for smooth convex bodies, Int. Math. Res. Notices. doi: 10.1093/imrn/rnr110, 2011] Stancu introduced a family of centro-affine normal flows, $p$-flow, for $1\leq p<\infty.$ Here we investigate the asymptotic behavior of the planar $p$-flow for $p=\infty$ in the class of smooth, origin-symmetric convex bodies. First, we prove that the $\infty$-flow evolves suitably normalized origin-symmetric solutions to the unit disk in the Hausdorff metric, modulo $SL(2).$ Second, using the $\infty$-flow and a Harnack estimate for this flow, we prove a stability version of the planar Busemann-Petty centroid inequality in the Banach-Mazur distance. Third, we prove that the convergence of normalized solutions in the Hausdorff metric can be improved to convergence in the $\mathcal{C}^{\infty}$ topology.
\end{abstract}

\maketitle
\section{Introduction}
The setting of this paper is $n$-dimensional Euclidean space. A compact convex subset of $\mathbb{R}^{n}$ with non-empty interior is called a \emph{convex body}. The set of convex bodies in $\mathbb{R}^{n}$ is denoted by $\mathcal{K}^n$. Write $\mathcal{K}^n_{e}$ for the set of origin-symmetric convex bodies and $\mathcal{K}^n_{0}$ for the set of convex bodies whose interiors contain the origin. Also write respectively $\mathcal{F}^n$, $\mathcal{F}^n_0$, and $\mathcal{F}^n_e$  for the set of smooth ($\mathcal{C}^{\infty}$-smooth), strictly convex bodies in $\mathcal{K}^n$, $\mathcal{K}^n_{0}$, and $\mathcal{K}^n_{e}$.

The support function of $K\in \mathcal{K}^n$, $h_K:\mathbb{S}^{n-1}\to\mathbb{R}$, is defined by \[h_{K}(u)=\max_{x\in \partial K}\langle x,u\rangle,\]
where $\langle \cdot,\cdot\rangle$ stands for the usual inner product of $\mathbb{R}^n.$
For $K\in\mathcal{K}^n$, write $V(K)$ for its Lebesgue measure as a subset of $\mathbb{R}^{n}$.
The centroid body $\Gamma K\in \mathcal{K}_e^n$ of convex body $K$ is the convex body whose support function is given by
\begin{equation}\label{e: support centroid}
h_{\Gamma K}(u)=\frac{1}{V(K)}\int_{K}|\langle u,x\rangle|dx.
\end{equation}
It was proved by Petty that $\Gamma \Phi  K=\Phi \Gamma K$ for $\Phi \in GL(n)$, \cite{P} (for newer references, see also \cite[Theorem 9.1.3]{Gardner} and \cite[Lemma 2.6]{LYZ2}). Moreover, a very interesting theorem of Petty states that the centroid body of a convex body is always of class $\mathcal{C}^{2}_+$ (the class of convex bodies with two times continuously differentiable boundary hypersurface and positive principal curvatures everywhere), \cite{P}.
Geometrically, for an origin-symmetric convex body $K$, the boundary of $\Gamma K$ is the locus of the centroids of all the halves of $K$ obtained by cutting $K$ with hyperplanes through the origin.
This concept dates back at least to Dupin. The Busemann-Petty centroid inequality, which was conjectured by Blaschke \cite{BW} and established by Petty in his seminal work \cite{P} using the Busemann random simplex inequality, states that
\begin{equation}\label{e: ineq centroid}
\frac{V(\Gamma K)}{V(K)}\geq \left(\frac{2\omega_{n-1}}{(n+1)\omega_n}\right)^n,
\end{equation}
with equality if and only if $K$ is an origin centered ellipsoid. See important extensions of this inequality by Campi, Gronchi, Haberl, Schuster, Lutwak, Yang, and Zhang in the context of the $L_p$ Brunn-Minkowski theory and the Orlicz Brunn-Minkowski theory \cite{CG1,CG2,HS,li2011new,Lutwak4,LYZ1,LYZ2}.

Let $K$ be a body in $\mathcal{F}^n_0$, whose boundary $\partial K$ is smoothly embedded in $\mathbb{R}^n$ by
$$X_K:\partial K\to\mathbb{R}^n.$$
We write $\nu:\partial K\to \mathbb{S}^{n-1}$ for the Gauss map of $\partial K$. That is, at each point $x\in\partial K$, $\nu(x)$ is the unit outwards normal at $x$.
The support function of $\partial K$ can also be described as
$$h_{K}(u):=\langle X_K(\nu^{-1}(u)),u\rangle.$$

The standard metric on $\mathbb{S}^{n-1}$ is denoted by $\bar{g}_{ij}$ and its standard Levi-Civita connection is denoted by $\bar{\nabla}$.
Write $\mathcal{G}_K$ for the Gauss curvature of $\partial K$, and $\mathcal{S}_K$ for the reciprocal Gauss curvature, as a function on the unit sphere. They are related to the support function by
\[\frac{1}{\mathcal{G}_K(\nu^{-1})}=\mathcal{S}_K:=\det_{\bar{g}}(\bar{\nabla}_i\bar{\nabla}_jh_K+\bar{g}_{ij}h_K).\]
When there are not any ambiguities we will drop the sub-scripts $K$ and $K_t.$

The flow whose definition will be given, was first introduced by Stancu in \cite{S}. In \cite{S} Stancu introduces new equicentro-affine invariants, and she provides a geometric interpretation of the $L_{\Phi}$ affine surface area introduced by Ludwig and Reitzner \cite{LR}. Further applications are given by Stancu in \cite{S4} in connection to the Paouris-Werner invariant defined on convex bodies \cite{PW}.

Let $K\in \mathcal{F}_{0}^n$. A family of convex bodies $\{K_t\}_t\in \mathcal{K}_{0}^n$  given by the smooth map $X:\partial K\times[0,T)\to \mathbb{R}^n$ is said to be a solution to the $p$-flow if $X$ satisfies the initial value problem
\begin{equation}\label{e: flow0}
 \partial_{t}X(x,t)=-\left(\frac{\mathcal{G}(x,t)}{\langle X(x,t), \nu(x,t)\rangle^{n+1}}\right)^{\frac{p}{p+n}-\frac{1}{n+1}}\mathcal{G}^{\frac{1}{n+1}}(x,t)\, \nu(x,t),~~
 X(\cdot,0)=X_K,
\end{equation}
for a fixed $1< p<\infty$. In this equation, $0<T<\infty$ is the maximal time that the solution exists, and $\nu(x,t)$ is the unit normal to the hypersurface $ X(\partial K,t)=\partial K_t$ at $X(x,t).$

The long time behavior of the flow in $\mathbb{R}^n$ with $K_0\in \mathcal{F}_{e}^n$ was investigated in \cite{Ivaki,IS,S6}. It was proved that for $1<p<\frac{n}{n-2}$ the volume-preserving $p$-flow, which keeps the volume of the evolving bodies ﬁxed and equal to the volume of the unit ball, evolves each body in $ \mathcal{F}_{e}^{n}$ to the unit ball in $\mathcal{C}^{\infty}$, modulo $SL(n).$ Two applications arising from the tools developed in \cite{Ivaki} to the $L_{-2}$ Minkowski problem and to the stability of the $p$-affine isoperimetric inequality in $\mathbb{R}^2$ were given in \cite{Ivaki1,Ivaki2}. The case $p=1$, corresponds to the well-known affine normal flow. This case in dimension two was addressed by Sapiro and Tannenbaum \cite{ST} and in higher dimensions by Andrews \cite{BA,BA2}. It was proved by Andrews that the volume-preserving affine normal flow evolves any convex initial bounded open set exponentially fast in the $\mathcal{C}^{\infty}$ topology to an ellipsoid. Ancient solutions, existence and regularity of solutions to the affine normal flow on non-compact strictly convex hypersurfaces have been treated in \cite{LT} by Loftin and Tsui.

It is easy to see from the definition of the support function that as convex bodies $K_t$ evolve by (\ref{e: flow0}), their corresponding support functions satisfy the partial differential equation
\begin{equation}\label{e: flow1}
\partial_t h(u,t)=-h(u,t)\left(\frac{1}{\mathcal{S}h^{n+1}}\right)^{\frac{p}{p+n}}(u,t),~ h(\cdot,0)=h_{K_0}(\cdot),
\end{equation}
see also \cite{S}.
The short time existence and uniqueness of solutions for smooth and strictly convex initial bodies follow from the strict parabolicity of the equation and were established in \cite{S}.

In this paper, we employ flow (\ref{e: flow1}) with $K_0\in\mathcal{F}_e^2$ and for the case $p=\infty$ and $n=2$:
\begin{align}\label{e: asli}
\left\{
  \begin{array}{ll}
    \partial_t h(u,t)=-\frac{1}{h^2\mathcal{S}}(u,t)=-\frac{1}{h^2(u,t)\left(h_{\theta\theta}+h\right)(u,t)}, \\
    h(\cdot,0)=h_{K_0}(\cdot).
  \end{array}
\right
.
\end{align}
Notice that the solution to (\ref{e: asli}) remains origin-symmetric. Here and then, we identify the unit normal $u=(\cos \theta, \sin \theta)$ with $\theta$.
The short time existence and uniqueness of solutions for a smooth and strictly convex initial body follow from the strict parabolicity of the equation.

Write $B$ for the unit disk of $\mathbb{R}^2.$ The first main result of the paper is contained in the following theorem.
\begin{thmA}
Let $K_0\in\mathcal{F}_{e}^2.$ Then there exists a unique solution $X:\mathbb{S}^1\times [0,T)\to\mathbb{R}^2$ of flow (\ref{e: asli}) with initial data $X_{K_0}$. The solution remains smooth and strictly convex on $[0,T)$ for a finite time $T>0$. The rescaled convex bodies given by $\left(V(B)/V(K_{t})\right)^{1/2}K_{t}$ converge sequentially in $\mathcal{C}^{\infty}$ to the unit disk, modulo $SL(2)$, as $t\rightarrow T.$
\end{thmA}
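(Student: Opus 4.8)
The plan is to read off from the flow a Lyapunov functional whose only critical points are origin-centred ellipses, to establish scale- and $SL(2)$-invariant a priori estimates up to the extinction time, and then to pass to a subsequential limit. \emph{Finite-time existence.} First I would observe that $\partial_t h<0$ along (\ref{e: asli}), so the bodies are nested, $K_t\subseteq K_0$, whence $K_t^\circ\supseteq K_0^\circ$ and $V(K_t^\circ)\ge V(K_0^\circ)>0$. The first variation of volume, the equation (\ref{e: asli}), and the planar identity $V(K^\circ)=\tfrac12\int_{\mathbb{S}^1}h_K^{-2}\,d\theta$ give $\tfrac{d}{dt}V(K_t)=\int_{\mathbb{S}^1}\mathcal{S}\,\partial_t h\,d\theta=-\int_{\mathbb{S}^1}h^{-2}\,d\theta=-2V(K_t^\circ)\le-2V(K_0^\circ)$, so the maximal existence time obeys $T\le V(K_0)/(2V(K_0^\circ))<\infty$; strict parabolicity keeps the solution smooth and strictly convex on $[0,T)$, and, once the estimates below are available, one checks $V(K_t)\to0$ as $t\to T$.

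\emph{The Lyapunov functional.} Using $\tfrac{d}{dt}V(K_t)=-2V(K_t^\circ)$ together with $\tfrac{d}{dt}V(K_t^\circ)=-\int_{\mathbb{S}^1}h^{-3}\partial_t h\,d\theta=\int_{\mathbb{S}^1}h^{-5}\mathcal{S}^{-1}\,d\theta$, I would compute
\[
\frac{d}{dt}\bigl(V(K_t)V(K_t^\circ)\bigr)=\frac12\left(\int_{\mathbb{S}^1}h\mathcal{S}\,d\theta\int_{\mathbb{S}^1}\frac{d\theta}{h^5\mathcal{S}}-\Bigl(\int_{\mathbb{S}^1}h^{-2}\,d\theta\Bigr)^2\right)\ge0,
\]
the inequality being Cauchy--Schwarz applied to $\sqrt{h\mathcal{S}}$ and $(h^5\mathcal{S})^{-1/2}$, whose product is $h^{-2}$. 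Thus the Mahler volume $V(K_t)V(K_t^\circ)$ is nondecreasing; it is invariant under $SL(2)$ and dilations and, by the Blaschke--Santal\'o inequality, is at most $V(B)^2=\pi^2$ with equality only for origin-centred ellipses. Hence it increases to a limit $\mathcal{M}\le\pi^2$, the time-integral of the Cauchy--Schwarz deficit above is finite, and equality in Cauchy--Schwarz forces $h^3\mathcal{S}\equiv\text{const}$, i.e.\ constant centro-affine curvature — a condition satisfied precisely by origin-centred ellipses.

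\emph{Estimates and the limit.} Set $\widetilde K_t:=(V(B)/V(K_t))^{1/2}K_t$, of fixed area $\pi$. A $C^0$ bound comes for free from John's ellipsoid theorem: there are $\Phi_t\in SL(2)$ with $\tfrac1{\sqrt2}B\subseteq\Phi_t\widetilde K_t\subseteq\sqrt2\,B$, hence $\tfrac1{\sqrt2}\le h_{\Phi_t\widetilde K_t}\le\sqrt2$. The key is a two-sided pinching $c_1\le h^3\mathcal{S}\le c_2$ for the normalized flow; since $h^3\mathcal{S}$ is $SL(2)$-invariant, combining it with the $C^0$ bound gives two-sided bounds on $\mathcal{S}=h_{\theta\theta}+h$, hence uniform parabolicity and a uniform $C^{1,1}$ estimate, after which Krylov--Safonov and Schauder theory bootstrap to uniform $C^k$ estimates for every $k$. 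The starting point for the pinching is the identity for the unnormalized flow, valid at any spatial maximum of $Q:=h^3\mathcal{S}$, namely $\partial_t Q=-4+h^4Q_{\theta\theta}Q^{-2}\le-4$, which yields an upper bound on $Q$; the lower bound is harder (see below). With the $C^k$ estimates in hand, any sequence $t_i\to T$ admits a subsequence along which $\Phi_{t_i}\widetilde K_{t_i}\to K_\infty$ in $C^\infty$, with $K_\infty\in\mathcal{F}_e^2$, $V(K_\infty)=\pi$ and $V(K_\infty)V(K_\infty^\circ)=\mathcal{M}$; since the scale- and $SL(2)$-invariant Cauchy--Schwarz deficit has finite time-integral and, by the estimates, bounded flow-derivative, it tends to $0$, so $K_\infty$ realizes equality in Cauchy--Schwarz and is therefore an origin-centred ellipse of area $\pi$. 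A suitable element of $SL(2)$ maps $K_\infty$ onto $B$, which is the assertion.

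\emph{The main obstacle.} The curvature pinching for the normalized flow is the crux. The maximum principle applied to $Q=h^3\mathcal{S}$ produces only the upper bound; at a spatial minimum the diffusion term has the wrong sign to close a lower bound by the same device, and $Q$ degenerates as $t\to T$. I expect to need a Harnack-type estimate for (\ref{e: asli}) — the second ingredient advertised in the introduction — controlling the spatial oscillation of the speed near the extinction time, used together with the monotonicity of the Mahler volume, to preclude concentration of centro-affine curvature. A secondary subtlety is that the $SL(2)$-normalizations $\Phi_t$ vary with $t$, so $\Phi_t\widetilde K_t$ is not itself a solution of the flow; this forces every pinching estimate to be carried out for the $SL(2)$-invariant quantity $h^3\mathcal{S}$ and only married to the $C^0$ normalization afterwards.
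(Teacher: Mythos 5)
Your Lyapunov functional is genuinely different from the paper's and, for the purposes of Theorem A alone, more elementary: you obtain the monotonicity of the Mahler volume $V(K_t)V(K_t^{\ast})$ by a one-line Cauchy--Schwarz computation, whereas the paper establishes the monotonicity of the Busemann--Petty ratio $V(\Gamma K_t)/V(K_t)$ (Corollary~\ref{cor: monotonicity of petty-centroid}) through the evolution equations of $h_{K_t^{\ast}}$, $\Gamma K_t$, $\Pi K_t^{\ast}$, the identity (\ref{e: centroid and projection of curvature image}), and the Minkowski inequality. Both deficits vanish precisely when $h^3\mathcal{S}$ is constant, i.e.\ when $\Lambda K=K$, which by Petty's Lemma~8.1 in \cite{P1} characterizes origin-centred ellipses; your pointwise computation $\partial_t Q=-4+h^4Q_{\theta\theta}Q^{-2}$ at a spatial maximum of $Q:=h^3\mathcal{S}$ checks out. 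Be aware, however, that the paper's choice of functional is not arbitrary: the Busemann--Petty ratio is what drives Theorem~B, and Theorem~B is then fed back into the proof of the $\mathcal{C}^{\infty}$ convergence in Theorem~A.

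That feedback loop is exactly where your proposal has a genuine gap, over and above the acknowledged difficulty with the lower bound on $h^3\mathcal{S}$. You propose to get the $C^0$ normalization from John's ellipsoid, which only yields $d_{\mathcal{BM}}(\Phi_t\widetilde K_t,B)\le\sqrt 2$. That is much too weak for the argument you will need: in Lemmas~\ref{lem: lower bound on the speed} and \ref{lem: upper bound on the speed} the paper requires $r_+(\Phi_tK_t)/r_-(\Phi_tK_t)\le\eta$ with $\eta<1.5^{1/4}\approx 1.107$ near the extinction time, and the comparison-principle estimate $r_-\ge\bigl(4(2/\eta^4-1)\bigr)^{1/4}$ already fails once $\eta\ge 2^{1/4}$, let alone $\sqrt 2$. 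The paper gets this strong pinching from Theorem~B combined with the fact that, by monotonicity, the subsequential Hausdorff convergence upgrades $V(\Gamma K_t)/V(K_t)\to(4/3\pi)^2$ into a full limit as $t\to T$. In your Mahler-volume framework the natural substitute would be a stability version of the Blaschke--Santal\'o inequality (e.g.\ B\"or\"oczky \cite{B}): once you know $V(K_t)V(K_t^{\ast})\to\pi^2$, stability would give $d_{\mathcal{BM}}(K_t,B)\to 1$, after which the Harnack estimate (Lemma~\ref{lem: Harnack est}), the displacement bound (Corollary~\ref{cor: displacement bound}), and Tso's estimate (Lemma~\ref{lem: upper G}) close the pinching of $\mathcal{G}/h^3$ exactly as in Section~6. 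Without that $C^0$ input, a Harnack estimate alone will not preclude concentration of centro-affine curvature.

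A smaller flaw: your claim that the invariant Cauchy--Schwarz deficit tends to $0$ does not follow from ``finite time-integral plus bounded time-derivative'' when $T<\infty$ (a non-zero constant on a finite interval is a counterexample). Either first extract, as in Proposition~\ref{prop: 1 step to asym shape}, a sequence $t_i\to T$ along which the deficit tends to $0$ and only then pass to a convergent subsequence, or argue that a subsequential $\mathcal{C}^{\infty}$-limit $K_\infty$ with positive deficit would, upon restarting the flow and using continuous dependence together with the monotonicity of $V(K_t)V(K_t^{\ast})$, exceed the limiting value $\mathcal{M}$, a contradiction.
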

It is, probably, worth mentioning that the method to conclude the long time behavior in this paper is significantly different from the method of \cite{Ivaki} and in particular relies on a number of affinely associated bodies from convex geometry, namely $K^{\ast}$, $\Pi K$, and $\Lambda K$ (See the next section for the definitions of these bodies.). Essentially, monotonicity of $V(\Gamma K_t)/V(K_t)$ along flow (\ref{e: asli}) plays a key role. A byproduct of the monotonicity is a stability version of planar Busemann-Petty centroid inequality.

Consider an inequality $\mathcal{I}:\mathcal{K}^n\to \mathbb{R}$, $\mathcal{I}(K)\geq 0$, for which the equality is precisely obtained for a family $\mathcal{M}\subseteq\mathcal{K}^n$. If for a convex body $L$ and some $\varepsilon>0$ there holds $\mathcal{I}(L)\leq \varepsilon$, what can be said about the distance of $L$, in an appropriate distance, from the objects in $\mathcal{M}$? Questions of this type investigate the stability of geometric inequalities and have appeared in the work of Minkowski and Bonnesen. See the beautiful survey \cite{Groemer} of Groemer for a wealth of information and references.

In recent times stability of several significant inequalities has been addressed which most of these geometric inequalities have  balls, ellipsoids, or simplices as objects for occurrence of the equality. To give examples, we mention  stability versions of the Brunn-Minkowski inequality due to Diskant \cite{Dis}, and due to A. Figalli, F. Maggi and A. Pratelli \cite{FMP}, stability of the Orlicz-Petty projection inequality \cite{Bor}, stability of the Rogers-Shephard inequality \cite{B1}, stability of the Blaschke-Santal\'{o} inequality, and the affine isoperimetric inequality in $\mathbb{R}^n$ \cite{B} by B\"{o}r\"{o}czky, stability of the reverse Blaschke-Santal\'{o} inequality by B\"{o}r\"{o}czky and Hug \cite{BH}, stability of the Pr\'{e}kopa-Leindler inequality by Ball and B\"{o}r\"{o}czky \cite{BB,BB2}, and more recently, stability of the functional forms of the Blaschke-Santal\'{o} inequality by Barthe, B\"{o}r\"{o}czky and Fradelizi \cite{BBF}. The second aim of this paper is to prove a stability version of the planar Busemann-Petty centroid inequality using (\ref{e: asli}). Within the last few years, a substantial amount of research was devoted to investigate applications of geometric flows to different areas of mathematics. In particular, there are several major contributions of geometric flows to convex geometry: a proof of the affine isoperimetric inequality by Andrews using the affine normal flow \cite{BA}, obtaining the necessary and sufficient conditions for the existence of a solution to the discrete $L_0$-Minkowski problem using crystalline curvature flow by Stancu \cite{S0,S5,S3} and independently by Andrews \cite{BA5}, an application of the affine normal flow to the regularity of minimizers of Mahler volume by Stancu \cite{S1}, and obtaining quermassintegral inequalities for $k$-convex star-shaped domains using a family of expanding flows \cite{PJ1}.

To state our stability result, we recall the Banach-Mazur distance. A natural tool in affine geometry of convex bodies is the Banach-Mazur distance which for two convex bodies $K,\bar{K}\in \mathcal{K}^n$ is defined by
\[d_{\mathcal{BM}}(K,\bar{K})=\min\{ \lambda\geq 1: K-x\subseteq \Phi (\bar{K}-y)\subseteq \lambda (K-x),~\Phi\in GL(n),~x,y\in\mathbb{R}^n\}.\]
For origin-symmetric bodies we can assume $x=y$ coincides with the origin.
It is easy to see that $d_{\mathcal{BM}}(K, \Phi \bar{K})=d_{\mathcal{BM}}(K,\bar{K})$ for all $\Phi\in GL(n).$ Moreover, the Banach-Mazur distance is multiplicative. That is,  for $K_1,K_2,K_3\in \mathcal{K}_e^n$ the following inequality holds:
\[d_{\mathcal{BM}}(K_1,K_3)\leq d_{\mathcal{BM}}(K_1,K_2)d_{\mathcal{BM}}(K_2,K_3).\]

The second main result of the paper is contained in the following theorem.
\begin{thmB}
There exist $\varepsilon_0>0$,\ and $\gamma>0$ such that the following holds:
If $0<\varepsilon<\varepsilon_0$ and $K$ is a convex body in $\mathbb{R}^2$ such that $\frac{V(\Gamma K)}{V(K)}\leq \left(\frac{4}{3\pi}\right)^2(1+\varepsilon)$, then $d_{\mathcal{BM}}(K,B)\leq 1+\gamma\varepsilon^{1/8}.$
Furthermore, if $K$ is origin-symmetric, then
$d_{\mathcal{BM}}(K,B)\leq 1+\gamma\varepsilon^{1/4}.$
\end{thmB}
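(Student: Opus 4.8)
The plan is to use the $\infty$-flow \eqref{e: asli} as a deformation that decreases the ratio $V(\Gamma K_t)/V(K_t)$ while controlling how far the body can be from a disk. The core idea: run the normalized flow starting from $K$; by Theorem~A (and the monotonicity of $V(\Gamma K_t)/V(K_t)$ asserted in the introduction) the normalized bodies subconverge to an ellipsoid, so for $t$ close to the final time $T$ we have $d_{\mathcal{BM}}(K_t,B)$ small. It then remains to bound $d_{\mathcal{BM}}(K,B)$ by $d_{\mathcal{BM}}(K_t,B)$ plus an error controlled by $\varepsilon$, which is where the Harnack estimate for \eqref{e: asli} enters: it furnishes a quantitative lower bound on the speed, hence an estimate for how far $\partial K$ can travel in a short time interval, and thus an estimate for $d_{\mathcal{BM}}(K,K_t)$. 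Finally one optimizes over $t$ — the length of the time interval controls $d_{\mathcal{BM}}(K,K_t)$ (small $t$ good) while the deficit $\varepsilon$ controls $d_{\mathcal{BM}}(K_t,B)$ (we need $t$ large enough that the flow has undone the deficit) — and the $\varepsilon^{1/8}$ (resp.\ $\varepsilon^{1/4}$) exponent falls out of balancing these two competing estimates.

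First I would reduce to the smooth, suitably normalized case: by continuity of $K\mapsto V(\Gamma K)/V(K)$ in the Hausdorff metric and density of $\mathcal{F}_e^2$ (resp.\ $\mathcal{F}_0^2$), and using the $GL(2)$-invariance $\Gamma\Phi K=\Phi\Gamma K$ together with the affine invariance of $d_{\mathcal{BM}}$, it suffices to prove the statement for $K\in\mathcal{F}^2$ in a fixed affine position (say John position) with $V(K)=V(B)$. Since the quantity $V(\Gamma K)/V(K)$ is $GL(2)$-invariant and $\geq(4/(3\pi))^2$ with equality exactly on ellipses, the hypothesis says $K$ has small Busemann-Petty deficit.

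Next, the quantitative heart. Along \eqref{e: asli} one tracks $Q(t):=V(\Gamma K_t)/V(K_t)$ and shows $Q'(t)\leq 0$, with a quantitative statement: either $Q$ is already within (a controlled multiple of) the deficit of the optimal value, or it strictly decreases at a definite rate — this gives, by integrating, an a priori bound on the time $t_\varepsilon$ one must flow to reach $d_{\mathcal{BM}}(K_{t_\varepsilon},B)\leq 1+C\varepsilon^{\alpha}$ for a suitable $\alpha$ (here one invokes Theorem~A together with a quantitative compactness/continuity argument turning smallness of the deficit into smallness of the Banach-Mazur distance after the flow). Simultaneously, the Harnack estimate bounds the normal speed $\frac{1}{h^2(h_{\theta\theta}+h)}$ from below (away from the origin) in terms of geometric quantities that are themselves controlled once $V(K_t)=V(B)$ and $d_{\mathcal{BM}}(K_t,B)$ is bounded; combined with an upper bound on the speed coming from a lower bound on $\mathcal{S}$ (equivalently, an upper curvature bound, which one again extracts from the flow — this is the standard way these centro-affine flows are controlled), one gets $\|h_{K_t}-h_K\|_\infty\leq C\,t^{\beta}$ on the relevant time interval, hence $d_{\mathcal{BM}}(K,K_t)\leq 1+C t^{\beta}$. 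Choosing $t$ to balance $t^{\beta}$ against $\varepsilon^{\alpha}$ yields the claimed exponents; the origin-symmetric case gains a factor because in $\mathcal{K}^2_e$ the Banach-Mazur distance is controlled by the ($L^2$- or Hausdorff-type) asphericity with an exponent one better than in the general case.

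The main obstacle I anticipate is the quantitative link between the Busemann-Petty deficit $\varepsilon$ and the \emph{time} needed for the flow to bring $K_t$ to within $1+C\varepsilon^{\alpha}$ of $B$ in Banach-Mazur distance: Theorem~A is only a sequential $\mathcal C^\infty$-convergence statement modulo $SL(2)$, so converting it into an effective estimate requires either a rate of decay for $Q(t)-(4/(3\pi))^2$ along the flow (a differential inequality of the form $-Q'\geq c\,(Q-Q_{\min})^{\kappa}$, or better, linear stability of the linearized flow at the disk giving exponential decay), or a compactness argument that is uniform over all smooth origin-symmetric bodies with bounded deficit — and making such a compactness argument quantitative (i.e.\ extracting the exponent $1/4$ or $1/8$ rather than an unspecified modulus of continuity) is delicate. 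A secondary difficulty is ensuring all curvature and inradius/circumradius bounds used in the Harnack-based speed estimate are uniform along the flow on the chosen time interval; this should follow from the monotone quantities available for \eqref{e: asli}, but must be checked so that the constants $\varepsilon_0,\gamma$ genuinely do not depend on $K$.
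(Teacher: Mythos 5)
Your high-level outline (flow, monotonicity, Harnack-controlled displacement, balance a short time $t$ against the deficit $\varepsilon$, and reduce the general case to the symmetric one via Steiner symmetrization) matches the paper's strategy, and you correctly sense where the exponent $\varepsilon^{1/4}$ comes from. But there is a genuine gap in the central step, and it is exactly the one you flag as "the main obstacle": converting Theorem~A into a quantitative $d_{\mathcal{BM}}(K_t,B)\le 1+C\varepsilon^{\alpha}$ estimate. The paper does \emph{not} resolve this by finding a decay rate or linear stability for the flow, nor does it ever flow "close to the final time $T$"; it avoids the issue entirely by a different mechanism, and your plan as written would fail.

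Two concrete reasons your route breaks down. First, the Harnack-based displacement bound $d_{\mathcal{BM}}(K_0,K_t)\le 1+2t\max(\mathcal{G}/h^3)(\cdot,t)$ degenerates near $T$: by Lemma~\ref{lem: upper bound on the speed} the centro-affine curvature blows up like $(T-t)^{-1}$, so flowing to $t$ near $T$ gives no control on $d_{\mathcal{BM}}(K_0,K_t)$ at all, and your intended balance cannot even be set up. Second, the monotonicity $Q'\le 0$ together with $Q(0)\le Q_{\min}(1+\varepsilon)$ and $Q\ge Q_{\min}$ already pins $Q(t)$ to within $\varepsilon$ of the optimal value for \emph{all} $t$, so "flow until the deficit is undone" is vacuous; what is needed is to turn smallness of the \emph{derivative} $Q'(s)$ at some instant into closeness of $K_s$ to a disk, and Theorem~A (sequential qualitative convergence) gives no such pointwise-in-time statement.

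The missing idea the paper supplies is the following: flow only on the short interval $[0,f(\varepsilon)]$ with $f(\varepsilon)=\varepsilon^{1/2}$, on which $1/2\le h_{K_t}\le\sqrt 2$ uniformly. By Busemann--Petty, $\int_0^{f(\varepsilon)}Q'(t)\,dt\ge -(4/3\pi)^2\varepsilon$, so by a pigeonhole argument there is $s\in[0,f(\varepsilon)]$ with $|Q'(s)|\le C\varepsilon/f(\varepsilon)$. The derivative formula (Lemma~\ref{lem: time dev of the ratio}) expresses $Q'(s)$ as a Minkowski mixed-volume deficit $1-V^2(\Lambda K_s^{\ast},K_s^{\ast})/(V(\Lambda K_s^{\ast})V(K_s^{\ast}))$, and Groemer's quantitative stability of Minkowski's inequality (stated after Theorem~A of Section~2) upgrades this to an oscillation bound on $h_{\Lambda K_s^{\ast}}/h_{K_s^{\ast}}$, hence on the affine support function $h_{\Lambda K_s^{\ast}}/\mathcal{G}_{\Lambda K_s^{\ast}}^{1/3}$, hence via Lemma~\ref{lem: ellipsoid app} to $d_{\mathcal{BM}}(\Lambda K_s^{\ast},B)\le 1+O((\varepsilon/f(\varepsilon))^{1/2})$; a similar bound controls $d_{\mathcal{BM}}(K_s^{\ast},\Lambda K_s^{\ast})$, and multiplicativity of $d_{\mathcal{BM}}$ plus the Harnack displacement bound $d_{\mathcal{BM}}(K_0,K_s)\le 1+O(f(\varepsilon)^{1/2})$ (Corollary~\ref{cor: displacement bound}, valid precisely because $s$ stays in the short interval) give the result after balancing. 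In short: the quantitative input is not a convergence rate for the flow, but Groemer's stability theorem applied to the explicit Minkowski deficit appearing in $Q'$; without that ingredient your argument cannot produce a power of $\varepsilon$.
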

Since $\mathcal{I}:\mathcal{K}^2\to \mathbb{R}$ defined by $\mathcal{I}(K):=\frac{V(\Gamma K)}{V(K)}-\left(\frac{4}{3\pi}\right)^2$ is a continuous functional in the Hausdorff distance, it suffices to prove Theorem B for bodies in $\mathcal{F}^2$. Moreover, in light of a theorem of Campi and Gronchi \cite{CG1} which states the volume of the centroid body is not increased after a Steiner symmetrization, and Theorem 1.4 of B\"{o}r\"{o}czky \cite{B}, it is enough to first prove Theorem B for bodies in $\mathcal{F}_e^2.$ The \emph{idea} to prove this result is as follows: Let $K\in\mathcal{F}_e^2$ satisfies the assumption of Theorem B and let $\{K_t\}$ be the solution to flow (\ref{e: asli}) with $K_0=\Phi K$, for an appropriate $\Phi\in GL(2)$. It will be proved that $V(\Gamma K_t)/V(K_t)$ is non-increasing in time. Furthermore, calculating the evolution equation of $V(\Gamma K_t)/V(K_t)$ we prove that its time derivative is controlled by a stable area ratio which is zero only for ellipses. From this observation, we will conclude that for some time $s>0$ and close to zero, $K_s$ must be close to the unit disk in the Banach-Mazur distance. Additionally, we can also control the distance between $K$ and $K_{s}$ in the Banach-Mazur distance (using a Harnack estimate) provided that $\varepsilon$ is small enough. Putting these observations altogether, we are able to prove that $K_0$ is close to the unit disk in the Banach-Mazur distance and so is $K.$ This approach to the stability problems was employed by the author to obtain the stability of the $p$-affine isoperimetric inequality for bodies in $\mathcal{K}_e^2$, \cite{Ivaki2}.

The paper is structured as follows: In the next section we recall some definitions and results from convex geometry. Section \ref{sec: basic prop} focuses on establishing basic properties of (\ref{e: asli}). We show that evolving bodies remain smooth, strictly convex and the area of the evolving convex bodies converge to zero in finite time. In Section \ref{sec: long time} we study the long time behavior of (\ref{e: asli}). To study the convergence of solutions, we resort to the evolution equation of $V(\Gamma K_t)/V(K_t)$ along the flow. The crucial result is that $V(\Gamma K_t)/V(K_t)$ is non-increasing along the flow. This observation implies that $\left(V(B)/V(K_t)\right)^{1/2}K_t$ converge in the Banach-Mazur distance to a limiting shape $\bar{K}_{\infty}$ with the property $\Lambda \bar{K}_{\infty}=\bar{K}_{\infty}$. It is here where using a theorem of Petty \cite{P1} on the latter equality in dimension two, we conclude the convergence of solutions to the unit disk modulo $SL(2)$. In Section \ref{sec: stability} we prove Theorem B. In the final section, we prove the sequential convergence of the normalized solution in $\mathcal{C}^{\infty}$ to the unit disk, modulo $SL(2).$

\textbf{Acknowledgment.}\\
For her many valuable suggestions, I am obliged to Monika Ludwig. I am indebted to the referee whose comments have led to improvements of this article. The work of the author was supported in part by Austrian Science Fund (FWF) Project P25515-N25.
\section{background and notation}
If $K,L$ are convex bodies and $0<a<\infty$, then the Minkowski sum $K+aL$ is defined by $h_{K+aL}=h_K+ah_L$ and the mixed volume $V_1(K,L)$ ($V(K,L)$ for planar convex bodies) of $K$ and $L$ is defined by
\[V_1(K,L)=\frac{1}{n}\lim_{a\to0^{+}}\frac{V(K+aL)-V(K)}{a}.\]
A fundamental fact is that corresponding to each convex body $K$, there is a unique Borel measure $S_K$ on the unit sphere such that
\[V_1(K,L)=\frac{1}{n}\int_{\mathbb{S}^{n-1}}h_L(u)dS_K(u)\]
for each convex body $L$. The measure $S_K$ is called the surface area measure of $K.$ Recall that if $K$ is $\mathcal{C}^2_+$, then $S_K$ is absolutely continuous with respect to $\sigma$, and the Radon-Nikodym derivative $dS_K(u)/d\sigma(u)$ defined on $\mathbb{S}^{n-1}$ is the reciprocal Gauss curvature of $\partial K$ at the point of $\partial K$ whose outer normal is $u.$
For a body $K\in \mathcal{K}^{n}$,
\[V(K)=V_1(K,K)=\frac{1}{n}\int_{\mathbb{S}^{n-1}}h_K(u)dS_K(u).\]
Of significant importance in convex geometry is the Minkowski mixed volume inequality. Minkowski's mixed volume inequality states that for $K,L\in\mathcal{K}^n,$
\[V_1(K,L)^n\geq V(K)^{n-1}V(L).\]
In the class of origin-symmetric convex bodies, equality hold if and only if $K=cL$ for some $c>0.$

In $\mathbb{R}^2$ a stronger version of Minkowski's inequality was obtained by Groemer \cite{Groemer1}. We provide his result for bodies in $\mathcal{K}_e^2:$
\begin{thmmain}[Groemer, \cite{Groemer1}]
Let $K,L\in\mathcal{K}_e^2$ and set $D(K)=2\max\limits_{\mathbb{S}^1}h_{K}$, then
\begin{align*}\frac{V(K,L)^2}{V(K)V(L)}-1&\geq \frac{V(K)}{4D^2(K)}\max_{u\in\mathbb{S}^1}\left|\frac{h_K(u)}{V(K)^{\frac{1}{2}}}-\frac{h_L(u)}{V(L)^{\frac{1}{2}}}\right|^2.
\end{align*}
\end{thmmain}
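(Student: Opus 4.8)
The plan is to run the usual Fourier-analytic argument for Minkowski-type inequalities on the circle, reinforced by the pointwise control that the diameter supplies — in essence Groemer's method. First I would normalize. Both sides are unchanged when $K$ and $L$ are rescaled independently (on the left $V(K,L)^2/(V(K)V(L))$ is scale-invariant in each argument; on the right $h_K/\sqrt{V(K)}$, $h_L/\sqrt{V(L)}$ and $V(K)/D^2(K)$ are each scale-invariant), so I may assume $V(K)=V(L)=\pi$; and since all quantities involved are continuous in the Hausdorff metric, I may assume $K,L\in\mathcal{F}^2_e$, so that $h_K,h_L$ are smooth and carry only even Fourier harmonics. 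Write $g=h_K-h_L$, $M=\max_{\mathbb{S}^1}|g|$, $D=D(K)$, and
\[
I=\int_0^{2\pi}\bigl((g')^2-g^2\bigr)\,d\theta .
\]
Expanding $V(K,L)=\tfrac12\int_0^{2\pi}(h_Kh_L-h_K'h_L')\,d\theta$ and using $V(K)=V(L)=\pi$ gives $V(K,L)=\pi+\tfrac14I$, while Minkowski's mixed volume inequality (as quoted above, in the equality-characterized origin-symmetric form) yields $I\geq0$, with $I=0$ only if $g\equiv0$. Hence $V(K,L)\geq\pi$, and
\[
\frac{V(K,L)^2}{V(K)V(L)}-1=\Bigl(\frac{V(K,L)}{\pi}-1\Bigr)\Bigl(\frac{V(K,L)}{\pi}+1\Bigr)\geq2\Bigl(\frac{V(K,L)}{\pi}-1\Bigr)=\frac{I}{2\pi};
\]
since $\max_{\mathbb{S}^1}\bigl|h_K/\sqrt{\pi}-h_L/\sqrt{\pi}\bigr|^2=M^2/\pi$, it therefore suffices to prove the one-dimensional estimate $I\geq\pi M^2/(2D^2)$.

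For this estimate I would assemble four facts about $g$. (i) A Lipschitz bound: for an origin-symmetric body every boundary point has Euclidean norm at most half the diameter, so $|h_K'|\leq D/2$ and $|h_L'|\leq D(L)/2$, and since $h_L\leq h_K+M$ forces $D(L)\leq D+2M$ we get $\|g'\|_\infty\leq D+M$. (ii) An Agmon-type (``bump'') inequality: if $|g(\theta_0)|=M$, then on the arc $J\ni\theta_0$ where $|g|\geq M/2$ one has $\int_J|g'|\geq M$ (as $g$ runs from $M$ at $\theta_0$ down to $M/2$ on each side) and $|J|\leq4\|g\|_2^2/M^2$, so $\int_J(g')^2\geq M^4/(4\|g\|_2^2)$ and hence $\|g\|_2^2\,\|g'\|_2^2\geq M^4/4$. (iii) Wirtinger's inequality: since $g$ carries no first harmonic, its mean-free part $\tilde g=g-\bar g$ obeys $\|\tilde g'\|_2^2\geq4\|\tilde g\|_2^2$, which rearranges to $I\geq3\|g\|_2^2-8\pi\bar g^2$. (iv) Control of the mean: subtracting the Fourier identities $V(K)=\pi$ and $V(L)=\pi$ from one another gives
\[
\tfrac12\bigl(a_0^2-c_0^2\bigr)=\sum_{k\geq2}(k^2-1)\,\langle\hat g_k,\hat h_{K,k}+\hat h_{L,k}\rangle ,
\]
with $a_0,c_0$ the zeroth Fourier coefficients of $h_K,h_L$ and $\hat{(\cdot)}_k$ the $k$-th coefficient vector; applying the Cauchy--Schwarz inequality, re-expressing the two resulting sums of squares in terms of $I$, $(a_0-c_0)^2$, $(a_0+c_0)^2$ and $V(K,L)$, and bounding $a_0+c_0\leq D+D(L)\leq2(D+M)$, yields $8\pi\bar g^2\leq I(D+M)^2$.

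Assembling (iii) and (iv) gives $\|g\|_2^2\leq\tfrac13I\bigl(1+(D+M)^2\bigr)$, and feeding this together with $\|g'\|_2^2=\|g\|_2^2+I$ into the Agmon inequality (ii) produces a lower bound of the form $I\geq c\,M^2/\bigl(4+(D+M)^2\bigr)$ for an absolute constant $c$; using that $V(K)=\pi$ forces $\max h_K\geq1$ and so $D\geq2$, this becomes $I\geq c'\,M^2/D^2$ as long as $M$ is not large relative to $D$, which closes the proof in that range. In the complementary regime, where $M$ is large and $L$ is consequently a very thin body with $D(L)$ comparable to $M$, I would argue geometrically instead, noting that $V(K,L)=\tfrac12\int_0^{2\pi}h_K\,dS_L$ is, up to a bounded factor, $M$ times the width of $K$ transverse to the long axis of $L$, and that every width of $K$ is at least its minimal width, which is at least $V(K)/D(K)$; this makes $I=4(V(K,L)-\pi)$ comfortably exceed $\pi M^2/(2D^2)$. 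A short case analysis — by the size of $M$ relative to $D$, and within the moderate range by the size of $|\bar g|$ relative to $M$ (when $|\bar g|>M/2$ one concludes directly from (iv)) — glues the pieces together.

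I expect the main obstacle to be quantitative rather than structural. Wirtinger's inequality only controls the oscillatory part $\tilde g$ of $g=h_K-h_L$, so the genuinely new input is step (iv): squeezing enough control of the mean $\bar g$ out of the single scalar constraint $V(K)=V(L)$. Moreover the inequality is essentially sharp — equality is approached when $K=L$ is an ellipse — so the various constants in (ii)--(iv) and in the reduction $V(K,L)^2/(V(K)V(L))-1\geq I/(2\pi)$ (which discards the positive term $\tfrac1{16}(I/\pi)^2$) must be tracked carefully: the crude bookkeeping above yields a constant $c'$ falling short of the required value $\pi/2$, and it is likely that restoring that discarded term, or sharpening the Wirtinger step via $\sum_{k\geq2}(k^2-1)|\hat g_k|^2\geq\tfrac34\sum_{k\geq2}k^2|\hat g_k|^2$, together with a tighter organization of the case analysis (or Groemer's original argument), is needed to land exactly the factor $V(K)/(4D^2)$ in the statement.
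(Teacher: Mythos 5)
The paper does not prove this statement; it is quoted as a known theorem with a citation to Groemer \cite{Groemer1}, so there is no in-paper argument to compare against. Judged on its own merits, your proposal lays out a plausible Fourier-analytic strategy in the tradition of Groemer's planar stability work, and the preliminary reductions are correct: the identity $V(K,L)=\pi+\tfrac14 I$ with $I=\int((g')^2-g^2)$, the sign $I\ge 0$ from Minkowski, the linearization $\tfrac{V(K,L)^2}{\pi^2}-1\ge\tfrac{I}{2\pi}$, the Lipschitz bound $\|g'\|_\infty\le D+M$, the Agmon estimate $\|g\|_2^2\|g'\|_2^2\ge M^4/4$, and the Wirtinger step $I\ge 3\|g\|_2^2-8\pi\bar g^2$ all check out. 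But the argument does not close, and you candidly say so.

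The genuine gap is concentrated in step (iv) and in the endgame. Step (iv) — squeezing control of $\bar g$ out of $V(K)=V(L)$ — is the one nonstandard ingredient and it is asserted, not derived. The displayed Parseval identity has the wrong prefactor (one gets $2(a_0^2-c_0^2)=\sum_{k\ge2}(k^2-1)\bigl\langle\hat g_k,\hat h_{K,k}+\hat h_{L,k}\bigr\rangle$, not $\tfrac12(a_0^2-c_0^2)$), and the target estimate $8\pi\bar g^2\le I(D+M)^2$ is not actually reached: running Cauchy--Schwarz naively, $\sum_{k\ge2}(k^2-1)\lvert\hat h_{K,k}+\hat h_{L,k}\rvert^2=2(a_0+c_0)^2-4-4V(K,L)/\pi$, and unless you exploit the subtracted term $4+4V(K,L)/\pi$ the resulting inequality for $\bar g$ is vacuous. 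Even granting (iv), your own assembly yields roughly $I\gtrsim\tfrac32 M^2/D^2$, short of the required $\tfrac{\pi}{2}M^2/D^2$. Finally, the large-$M$ case is only gestured at: the geometric argument controls $I$ linearly in $M/D$ (via $V(K,L)\gtrsim \ell\,\pi/(2D)$ with $\ell\sim 2M$), while the target is quadratic in $M/D$, so you must restore the discarded $\tfrac{1}{16}(I/\pi)^2$ term there — you say as much but do not carry it out. What you have is the right skeleton with most of the right lemmas, but the delicate bookkeeping that lands the exact constant $V(K)/(4D^2)$ is missing, so this is a strategy rather than a proof.
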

The projection body, $\Pi K\in \mathcal{K}^n_e,$ of convex body $K$ is the convex body whose support function is given by
\begin{equation}\label{e: support projection}
h_{\Pi K}(u)=\frac{1}{2}\int_{\partial K}|\langle u,v\rangle|dv,
\end{equation}
where the integration is done with respect to $(n-1)$-Hausdorff measure.
\begin{remark}\label{rem: projection dimension two}
If $L\in\mathcal{K}^2_e$, then $\Pi L= L^{\pi/2}+L^{-\pi/2}=2L^{\pi/2}$, where $h_{L^{\pi/2}}(\theta)=h_L(\theta+\pi/2)$ and $h_{L^{-\pi/2}}(\theta)=h_L(\theta-\pi/2)$, see \cite[Theorem 4.1.4]{Gardner} (Convex bodies $L^{ \pi/2}$ and $L^{-\pi/2}$ are rotations of $L$ counter-clockwise and clockwise through $90^{\circ}$ respectively.).
\end{remark}
The polar body, $K^{\ast}$, of $K\in \mathcal{K}^n_0$ is the convex body defined by
$$K^{\ast}=\{x\in\mathbb{R}^n: \langle x,y\rangle\leq 1 \mbox{~for~all~}y\in K\}.$$
A fundamental affine inequality is the Petty projection inequality which states for $K\in\mathcal{K}^n$,
\begin{equation}\label{e: projection inequality}
V(K)^{n-1} V((\Pi K)^{\ast})\leq \left(\frac{\omega_{n}}{\omega_{n-1}}\right)^n,
\end{equation}
with equality if and only if $K$ is an ellipsoid. This inequality was proved by Petty \cite{P2}, using the Busemann-Petty centroid inequality. Another proof of the Petty projection inequality using the Busemann-Petty centroid inequality is provided by Lutwak through a class reduction technique, \cite{Lutwak}. It is worth pointing out that the Petty projection inequality is a strengthened form of the classical isoperimetric inequality. Extensions of (\ref{e: projection inequality}) are given in \cite{Lutwak1,Lutwak2}.

For $x\in\operatorname{int}K$, let $K^x:=(K-x)^{\ast}.$ The Santal\'{o} point of $K$, denoted by $s$, is the unique point in int $K$ such that
$$V(K^s)\leq V(K^x)$$
for all $x\in\operatorname{int}K.$ The Blaschke-Santal\`{o} inequality \cite{BW1,Santalo} states that
\begin{equation}\label{e: blascke-santalo inequality}
V(K^s)V(K)\leq \omega_n^2,
\end{equation}
with equality if and only if $K$ is an ellipsoid. The equality condition was proved by Saint Raymond \cite{SR} in the symmetric case and Petty \cite{P1} in the general case. A proof of this inequality is also given via the affine normal flow by Andrews \cite{BA,BA2}.

The Santal\'{o} point of $K$ is characterized by the following property
$$\int_{\mathbb{S}^{n-1}}\frac{u}{h_{K-s}^{n+1}(u)}d\sigma(u)=0,$$
where $\sigma$ is the spherical Lebesgue measure on $\mathbb{S}^{n-1}.$ Thus, for an arbitrary convex body $K$, $h_{K-s}^{-(n+1)}$ satisfies the sufficient condition of Minkowski's existence theorem in $\mathbb{R}^n$. So there exists a unique convex body (up to translation), denoted by $\Lambda K$, whose surface area measure, $S_{\Lambda K}$, satisfies
\begin{equation}\label{e: def Lambda}
dS_{\Lambda K}=\frac{V(K)}{V(K^s)}h_{K-s}^{-(n+1)}d\sigma.
\end{equation}
Moreover, $\Lambda \Phi K=\Phi \Lambda  K$ (up to translation) for $\Phi \in GL(n)$, see \cite[Lemma 7.12]{Lutwak3}.
\begin{remark}\label{re: curvature image symmetric}
By the uniqueness of the solution to the even Minkowski problem, if $K\in\mathcal{K}_e^2$, then there is a unique origin-symmetric solution to (\ref{e: def Lambda}). Indeed, for each solution $\Lambda K$, we have $\Lambda K+\vec{a}\in \mathcal{K}_e^2$, for some vector $\vec{a}$ (see, for example, \cite[p. 370]{Lutwak3}). In the sequel, we always assume, without loss of generality, that if $L\in\mathcal{K}_e^2,$ then $\Lambda L\in \mathcal{K}_e^2.$  Also notice that for $K\in\mathcal{K}^2_e$ the property $\Lambda \Phi  K=\Phi \Lambda K$ (up to translation), for $\Phi  \in GL(2)$, implies that $(\Lambda K)^{\pi/2}=\Lambda K^{\pi/2}$.
\end{remark}
A useful characterization of the centroid operator is given by Lutwak \cite[Lemma 5]{Lutwak}:
\begin{equation}\label{e: centroid and projection of curvature image}
\Gamma (K-c)=\frac{2}{(n+1)V(K^c)}\Pi\Lambda K^c,
\end{equation}
where $c$ denotes the centroid of $K.$

Let $K$ be an origin-symmetric convex body, then the existence of John's ellipsoid implies $d_{\mathcal{BM}}(K,B)\leq \sqrt{n}$, \cite{J} (See also \cite{FG} for a simple proof.). In particular, this implies that for each $K\in\mathcal{K}_e^2$, there is a linear transformation $\Phi\in GL(2)$ such that $1\leq h_{\Phi K}\leq \sqrt{2}.$

Given a body $K\in\mathcal{K}^n$, the inner radius of $K$, $r_-(K)$, is the radius of the largest ball contained in $K$; the outer radius of $K$, $r_+(K)$, is the radius of the smallest ball containing $K$. For each $K\in\mathcal{K}_e^n$, the smallest and the largest balls as above will be centered at the origin of $\mathbb{R}^n$.

We conclude this section by mentioning that for $K\in\mathcal{K}^n_e$ the Santal\'{o} point and the centroid coincide with the origin of $\mathbb{R}^n$.

\section{Basic properties of the flow}\label{sec: basic prop}
Arguments of this section are standard. For completeness, we sketch their proofs. Recall that $V(K)=\displaystyle \frac{1}{2} \int_{\mathbb{S}^1}h_K\mathcal{S}_K\, d\theta$ and
$V(K^{\ast})=\displaystyle \frac{1}{2}\int_{\mathbb{S}^1}\frac{1}{h_K^2}d\theta$. The following evolution equations can be derived by direct computation.
\begin{lemma}\label{lem: ev equations} Under flow (\ref{e: asli}) one has
\begin{equation}\label{e: ev equation}
\frac{\partial}{\partial t}\mathcal{S}=-\left(h^{-2}\mathcal{S}^{-1}\right)_{\theta\theta}-h^{-2}\mathcal{S}^{-1},
\end{equation}
\begin{equation}\label{e: volume}
\frac{d}{dt}V(K_t)=-2V(K_t^{\ast}).
\end{equation}
\end{lemma}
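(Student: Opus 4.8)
The strategy is a direct computation starting from the evolution of the support function in \eqref{e: asli}, using the standard formulae recalled at the start of Section \ref{sec: basic prop}. First I would establish \eqref{e: ev equation}. Recall that in the plane $\mathcal{S}=h_{\theta\theta}+h$, so differentiating in $t$ and commuting $\partial_t$ with the $\theta$-derivatives (legitimate since the solution is smooth on $[0,T)$) gives $\partial_t\mathcal{S}=(\partial_t h)_{\theta\theta}+\partial_t h$. Substituting $\partial_t h=-h^{-2}\mathcal{S}^{-1}$ from \eqref{e: asli} yields exactly $\partial_t\mathcal{S}=-\left(h^{-2}\mathcal{S}^{-1}\right)_{\theta\theta}-h^{-2}\mathcal{S}^{-1}$, which is \eqref{e: ev equation}. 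No estimate is needed here; it is purely the observation that $\mathcal{S}$ is a linear second-order elliptic operator applied to $h$, so its time derivative is the same operator applied to $\partial_t h$.

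Next I would derive \eqref{e: volume}. Starting from $V(K_t)=\frac12\int_{\mathbb{S}^1}h\,\mathcal{S}\,d\theta$ and differentiating under the integral sign, $\frac{d}{dt}V(K_t)=\frac12\int_{\mathbb{S}^1}\left((\partial_t h)\mathcal{S}+h\,\partial_t\mathcal{S}\right)d\theta$. The cleaner route, however, is to use the mixed-volume identity: for a family of convex bodies with support functions $h(\cdot,t)$ one has $\frac{d}{dt}V(K_t)=\int_{\mathbb{S}^1}(\partial_t h)\,dS_{K_t}=\int_{\mathbb{S}^1}(\partial_t h)\,\mathcal{S}\,d\theta$, valid in the plane because $dS_{K_t}=\mathcal{S}\,d\theta$ for $\mathcal{C}^2_+$ bodies. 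Plugging in $\partial_t h=-h^{-2}\mathcal{S}^{-1}$ collapses the integrand: $\frac{d}{dt}V(K_t)=-\int_{\mathbb{S}^1}h^{-2}\mathcal{S}^{-1}\,\mathcal{S}\,d\theta=-\int_{\mathbb{S}^1}h^{-2}\,d\theta=-2V(K_t^{\ast})$, using $V(K^{\ast})=\frac12\int_{\mathbb{S}^1}h^{-2}\,d\theta$. If one prefers to avoid quoting the mixed-volume differentiation formula, the same conclusion follows from the symmetry $\int h\,\partial_t\mathcal{S}\,d\theta=\int (\partial_t h)\,\mathcal{S}\,d\theta$, which is integration by parts twice on $\mathbb{S}^1$ applied to $\int h\,(\partial_t h)_{\theta\theta}\,d\theta=\int (\partial_t h)\,h_{\theta\theta}\,d\theta$ (no boundary terms on the circle), combined with \eqref{e: ev equation}.

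There is essentially no obstacle here — the computations are routine and the paper itself flags them as standard. The only point requiring a word of care is the justification of differentiating under the integral and commuting $\partial_t$ with $\bar\nabla$, which is immediate from the smoothness of the solution on $[0,T)$ guaranteed by short-time existence; and, if one uses the mixed-volume route for \eqref{e: volume}, recalling that $K_t\in\mathcal{C}^2_+$ so that $dS_{K_t}=\mathcal{S}_{K_t}\,d\theta$ with no singular part. I would present \eqref{e: ev equation} first and then use it (via the integration-by-parts symmetry) to obtain \eqref{e: volume}, so the proof is self-contained within the section.
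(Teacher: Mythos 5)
Your proposal is correct and is exactly the ``direct computation'' the paper alludes to without writing out: differentiate $\mathcal{S}=h_{\theta\theta}+h$ in $t$ and substitute $\partial_t h=-h^{-2}\mathcal{S}^{-1}$ for \eqref{e: ev equation}, then differentiate $V(K_t)=\tfrac12\int h\,\mathcal{S}\,d\theta$ and use either the mixed-volume variation formula or the integration-by-parts symmetry to collapse the integrand to $-h^{-2}$, giving $-2V(K_t^{\ast})$. The paper gives no proof for this lemma (it only records the two volume formulae beforehand and states the equations follow by direct computation), so your write-up supplies precisely the omitted routine argument, with appropriate care about differentiating under the integral.
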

\begin{proposition} \label{prop: ev speed}
The time dependent quantity $\displaystyle\min_{\theta\in\mathbb{S}^1}\left(h^{-2}\mathcal{S}^{-1}\right)(\theta,t)$ increases in time under (\ref{e: asli}).
\end{proposition}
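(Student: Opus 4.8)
The plan is to track the evolution of the function $w := h^{-2}\mathcal{S}^{-1}$, which is precisely the (negative of the) normal speed in \eqref{e: asli}, and to show that its spatial minimum is non-decreasing by a parabolic maximum principle argument. First I would derive the evolution equation for $w$ itself. Since $\partial_t h = -w$, we have $\partial_t(h^{-2}) = 2h^{-3}w$, and from \eqref{e: ev equation} we already know $\partial_t \mathcal{S} = -w_{\theta\theta} - w$, hence $\partial_t(\mathcal{S}^{-1}) = \mathcal{S}^{-2}(w_{\theta\theta}+w)$. Combining via the product rule,
\begin{equation*}
\partial_t w = \partial_t(h^{-2})\,\mathcal{S}^{-1} + h^{-2}\,\partial_t(\mathcal{S}^{-1}) = 2h^{-3}\mathcal{S}^{-1}w + h^{-2}\mathcal{S}^{-2}(w_{\theta\theta}+w).
\end{equation*}
Writing $a := h^{-2}\mathcal{S}^{-2} > 0$ this becomes $\partial_t w = a\, w_{\theta\theta} + \bigl(2h^{-1} + h^{-2}\mathcal{S}^{-1}\bigr)w$, which I would rewrite more cleanly as $\partial_t w = a\, w_{\theta\theta} + b\, w$ with $b := 2h^{-1} + h^{-2}\mathcal{S}^{-1} > 0$ (note $b>0$ because $h>0$ and $\mathcal{S}>0$ by strict convexity and the origin-symmetry, so the origin is interior).

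Next I would apply the parabolic minimum principle on the compact manifold $\mathbb{S}^1$. Fix $t_0 \in [0,T)$ and let $\theta_0$ be a point where $w(\cdot,t_0)$ attains its minimum $m(t_0) := \min_{\theta} w(\theta,t_0)$. At an interior spatial minimum we have $w_{\theta\theta}(\theta_0,t_0) \geq 0$, and since $a>0$ the diffusion term is non-negative there; moreover $b>0$ and $w>0$ (the speed is positive throughout since $h, \mathcal{S} < \infty$), so $b\,w \geq 0$ at that point as well. Hence $\partial_t w(\theta_0,t_0) \geq 0$. A standard argument — either via Hamilton's trick for the evolution of the minimum of a smooth family, or by comparing $w$ with the solution of the ODE $\dot\mu = b_{\min}(t)\mu$ — upgrades this pointwise inequality to the monotonicity statement $\frac{d}{dt}\, m(t) \geq 0$ in the sense of forward difference quotients (Dini derivatives), which gives that $m(t)$ is non-decreasing on $[0,T)$.

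The main technical point to handle carefully is the justification that $\min_\theta w(\theta,t)$ is a (locally Lipschitz) function of $t$ whose lower Dini derivative is controlled by $\partial_t w$ at the minimizing point; this is exactly Hamilton's lemma on the differentiability of extrema of smooth one-parameter families, and I would simply cite the standard reference rather than reprove it. The only other thing to verify is the positivity of $b$ and $w$, which, as noted, follows from $0 < h < \infty$ and $0 < \mathcal{S} < \infty$ on $[0,T)$ — guaranteed by smoothness and strict convexity of $K_t$ together with the fact that the origin stays in the interior (the body remains origin-symmetric). I do not expect any genuine obstacle here: the sign of every term in the evolution equation for $w$ is favorable, so the maximum principle applies directly; the content is essentially the computation of $\partial_t w$ and the observation that the zeroth-order coefficient is positive rather than negative.
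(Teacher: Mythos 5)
Your proposal is correct and follows the paper's argument exactly: compute $\partial_t\left(h^{-2}\mathcal{S}^{-1}\right)$ from the flow equation and the evolution of $\mathcal{S}$, observe that every term on the right-hand side is nonnegative at a spatial minimum, and invoke the parabolic maximum principle. One small algebraic slip in the final rewriting: with $w:=h^{-2}\mathcal{S}^{-1}$ the zeroth-order coefficient is $b=2h^{-3}\mathcal{S}^{-1}+h^{-2}\mathcal{S}^{-2}=w\left(2h^{-1}+\mathcal{S}^{-1}\right)$, not $2h^{-1}+h^{-2}\mathcal{S}^{-1}$; since only $b>0$ is used, this does not affect the conclusion.
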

\begin{proof}
Using evolution equations (\ref{e: asli}) and (\ref{e: ev equation}), we obtain
\begin{align}\label{e: laplacian}
\frac{\partial}{\partial t}\left(h^{-2}\mathcal{S}^{-1}\right)=&\left(\frac{\partial}{\partial t}h^{-2}\right)\mathcal{S}^{-1}+h^{-2}\left(\frac{\partial}{\partial t}\mathcal{S}^{-1}\right)\nonumber\\
=&\mathcal{S}^{-2}h^{-2}\left[
\left(h^{-2}\mathcal{S}^{-1}\right)_{\theta\theta}+h^{-2}\mathcal{S}^{-1}\right] \nonumber\\
&+2h^{-3}\mathcal{S}^{-1}\left(h^{-2}\mathcal{S}^{-1}\right).\nonumber
\end{align}
Standard parabolic maximum principle completes the proof.
\end{proof}
An immediate consequence is the preservation of the strict convexity.
\begin{corollary}\label{cor: Convexity is preserved}
The strict convexity of the evolving bodies is preserved as long as the flow exists.
\end{corollary}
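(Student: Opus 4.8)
The plan is to deduce the corollary immediately from Proposition \ref{prop: ev speed}. The key observation is that strict convexity of $K_t$ is precisely the statement $\mathcal{S}_{K_t}=h_{\theta\theta}+h>0$ on $\mathbb{S}^1$, and that this is equivalent to $h^{-2}\mathcal{S}^{-1}>0$ provided we know $h>0$. The latter is automatic: each $K_t$ is an origin-symmetric convex body with nonempty interior, hence the origin lies in its interior, and therefore $h_{K_t}>0$ on all of $\mathbb{S}^1$.

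First I would record that, since $K_0\in\mathcal{F}_e^2$, the number $c_0:=\min_{\theta\in\mathbb{S}^1}(h^{-2}\mathcal{S}^{-1})(\theta,0)$ is strictly positive: indeed $h_{K_0}>0$ and $\mathcal{S}_{K_0}>0$, and $\mathbb{S}^1$ is compact. Next, on the maximal interval $[0,T)$ on which the smooth solution of (\ref{e: asli}) exists, Proposition \ref{prop: ev speed} gives $\min_{\theta\in\mathbb{S}^1}(h^{-2}\mathcal{S}^{-1})(\theta,t)\geq c_0>0$ for every $t$, so $h^{-2}\mathcal{S}^{-1}\geq c_0>0$ pointwise. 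Since $h^{-2}>0$, this forces $\mathcal{S}^{-1}>0$, i.e. $\mathcal{S}>0$, which is exactly the strict convexity of $K_t$.

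The only point that needs a word of care is the mild circularity: Proposition \ref{prop: ev speed} is proved via the parabolic maximum principle, which presupposes that (\ref{e: asli}) remains parabolic, i.e. that $\mathcal{S}>0$ along the way. I would handle this in the standard fashion: let $T'$ be the supremum of times up to which $\mathcal{S}>0$ holds on $[0,T']$; on $[0,T')$ the argument above applies and yields $h^{-2}\mathcal{S}^{-1}\geq c_0$, and by continuity this bound persists at $t=T'$ whenever $T'<T$, so $\mathcal{S}(\cdot,T')>0$ as well, contradicting the maximality of $T'$. Hence $T'=T$. Apart from this bookkeeping, there is no genuine obstacle; the substance of the corollary is entirely contained in Proposition \ref{prop: ev speed}.
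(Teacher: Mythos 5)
Your argument is correct and is essentially the paper's own proof: both deduce strict convexity from Proposition \ref{prop: ev speed}, which gives $\mathcal{G}/h^2 = h^{-2}\mathcal{S}^{-1} \ge \min_{\mathbb{S}^1}(\mathcal{G}/h^2)(\cdot,0) > 0$ as long as the flow exists, hence $\mathcal{G}>0$. The extra bookkeeping you include about the (mild) circularity with the maximum principle is a reasonable precaution that the paper leaves implicit, but it does not change the substance of the argument.
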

\begin{proof}
By Proposition ~\ref{prop: ev speed} as long as the flow exists,
\[\min_{\theta\in\mathbb{S}^1}\left(\mathcal{G}/h^2\right)(\theta,t)\ge
\min_{\theta\in\mathbb{S}^1}\left(\mathcal{G}/h^2\right)(\theta,0).\]
Therefore
\[\mathcal{G}(\theta, t)\ge h^{2}(\theta, t)\min_{\theta\in\mathbb{S}^1}\left(\mathcal{G}/h^2\right)(\theta,0)> 0.\]
So the claim follows.
\end{proof}
\begin{lemma}\label{lem: upper bound for speed}
If there exists an $ r>0$ such that $h_{K_t}\ge r$ on $[0,T)$, then $\{\mathcal{G}_{K_t}\}$ is uniformly bounded from above on $[0,T)$.
\end{lemma}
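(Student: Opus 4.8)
The plan is to use a Tso-type auxiliary function. First I would record the two elementary facts: strict convexity is preserved (Corollary~\ref{cor: Convexity is preserved}), so $\mathcal{S}>0$ and the normal speed $\varphi:=h^{-2}\mathcal{S}^{-1}=h^{-2}\mathcal{G}$ is positive; and since $\partial_t h=-\varphi<0$ the support functions are non-increasing in time, hence $h\le R:=\max_{\mathbb{S}^1}h_{K_0}$. Combined with the hypothesis this gives $r\le h\le R$ on $[0,T)$. Choosing $a:=r/2$, so that $h-a\ge r/2>0$ throughout, it suffices to bound $\varphi$ from above, because $\mathcal{G}=h^2\varphi\le R^2\varphi$.

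The key step is to derive and analyse the evolution equation of $\Theta:=\dfrac{\varphi}{h-a}$. Starting from $\partial_t h=-\varphi$ and the evolution equation for $\varphi=h^{-2}\mathcal{S}^{-1}$ already computed in the proof of Proposition~\ref{prop: ev speed}, namely $\partial_t\varphi=h^2\varphi^2\varphi_{\theta\theta}+h^2\varphi^3+2h^{-1}\varphi^2$, I would obtain a parabolic equation for $\Theta$ and evaluate it at a point where $\theta\mapsto\Theta(\theta,t)$ attains its spatial maximum. There $\Theta_\theta=0$ and $\Theta_{\theta\theta}\le0$, so $\varphi_{\theta\theta}\le\Theta h_{\theta\theta}$; substituting this together with $\varphi=\Theta(h-a)$ and the identity $h_{\theta\theta}=\mathcal{S}-h=\frac{1}{h^2\Theta(h-a)}-h$ should make the third-order contribution cancel, leaving
\[
\frac{d}{dt}\max_{\mathbb{S}^1}\Theta(\cdot,t)\ \le\ 2\Theta^2+2h^{-1}(h-a)\,\Theta^2-a\,h^2(h-a)\,\Theta^3\ \le\ 4\,\Theta^2-\tfrac{r^4}{4}\,\Theta^3,
\]
where the last step uses $0<h^{-1}(h-a)<1$, $a=r/2$, $h\ge r$ and $h-a\ge r/2$; here $\max_{\mathbb{S}^1}\Theta(\cdot,t)$ is Lipschitz in $t$ and the inequality is understood for its upper Dini derivative, as in Hamilton's trick.

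From the differential inequality $\frac{d}{dt}\max\Theta\le 4(\max\Theta)^2-\tfrac{r^4}{4}(\max\Theta)^3$ I would conclude that $\max\Theta$ cannot increase once it exceeds $16/r^4$; since $\max_{\mathbb{S}^1}\Theta(\cdot,0)<\infty$ by smoothness and strict convexity of $K_0\in\mathcal{F}^2_e$, this yields
\[
\max_{\mathbb{S}^1}\Theta(\cdot,t)\ \le\ \max\Big\{\max_{\mathbb{S}^1}\Theta(\cdot,0),\ \tfrac{16}{r^4}\Big\}=:C\qquad\text{for all }t\in[0,T).
\]
Then $\varphi=\Theta(h-a)\le CR$ and therefore $\mathcal{G}_{K_t}=h^2\varphi\le CR^3$ on $[0,T)$, which is the asserted uniform bound.

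I expect the main obstacle to be the bookkeeping of the cancellation in the evolution equation of $\Theta$. A direct application of the maximum principle to $\mathcal{G}$ (or to $\varphi$) is not enough: there the reaction term has the wrong sign and only gives $\frac{d}{dt}\max\mathcal{G}\lesssim(\max\mathcal{G})^3$, which does not rule out blow-up in finite time. Dividing by $h-a$ is precisely what, after the leading-order terms cancel, produces the favourable cubic term $-\frac{r^4}{4}\Theta^3$ dominating the quadratic one for large $\Theta$; getting this cancellation and the resulting signs exactly right is the only delicate point, and it is exactly the step in which the hypothesis $h\ge r>0$ is used.
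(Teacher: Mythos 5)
Your proof is correct and follows essentially the same route as the paper: both use Tso's trick with exactly the same auxiliary function $\Theta=\Omega=\dfrac{\mathcal{G}/h^{2}}{\,h-r/2\,}$, apply the first- and second-derivative tests at a spatial maximum, use $h_{\theta\theta}=\mathcal{S}-h$ to extract a favourable cubic term, and invoke $r\le h\le\max_{\mathbb{S}^1}h_{K_0}$ to close. The only cosmetic difference is in the final step: the paper evaluates $\partial_t\Omega\ge 0$ at the spacetime maximum and reads off $\mathcal{G}\le 4/\rho$ there directly, whereas you phrase it as a differential inequality $\frac{d}{dt}\max\Theta\le 4\Theta^2-\tfrac{r^4}{4}\Theta^3$ for the spatial maximum (Hamilton's trick) and deduce $\max\Theta\le\max\{\Theta(\cdot,0),16/r^4\}$; these are equivalent and your computation of the reaction terms matches the paper's.
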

\begin{proof}
We only sketch a proof that is based on Tso's trick \cite{Tso}. Define $\Omega(\theta,t):= \frac{h^{-2}\mathcal{S}^{-1}}{h-\rho}$, where $\rho=\frac{1}{2}r$. We may assume, without loss of generality, that the maximum of $\Omega$ occurs in $(0,T)$.
At the point where the maximum of $\Omega$ occurs, we have
$$\Omega_{\theta}=0, ~~ \Omega_{\theta\theta}\leq 0,$$
and
\[0\leq\frac{\partial}{\partial t}\Omega\leq\frac{1}{h-\rho}\left[h^{-2}\mathcal{S}^{-2}\left(-\frac{\rho h^{-2}\mathcal{S}^{-1}- h^{-2}}{h-\rho}\right)+\mathcal{S}^{-1}\frac{\partial}{\partial t} h^{-2}+ \frac{(h^{-4}\mathcal{S}^{-2})}{h-\rho}\right].\]
This last inequality gives
$$-\rho\mathcal{G}-2\frac{\rho}{h}+4\ge 0\Rightarrow 0<\mathcal{G}\leq \frac{4}{\rho}.$$
This gives an upper bound on $\Omega$ and thus claim follows.
\end{proof}
\begin{proposition}\label{prop: area goes to zero}
Let $T$ be the maximal time of existence of the solution to flow (\ref{e: asli}) with $K_0 \in \mathcal{K}_{e}^2$. Then $T$ is finite and $V(K_t)$ tend to zero as $t$ approaches $T$.
\end{proposition}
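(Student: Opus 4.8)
The plan is to read off everything from the area evolution equation (\ref{e: volume}) together with the fact that the support function is pointwise decreasing along the flow.

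\emph{Finiteness of $T$.} Since $K_0\in\mathcal{K}^2_e$, each $K_t$ is origin-symmetric, so the origin is an interior point and $h_{K_t}>0$; together with the preservation of strict convexity (Corollary~\ref{cor: Convexity is preserved}) this makes the right-hand side of (\ref{e: asli}) strictly negative, so $h_{K_t}$ is pointwise non-increasing in $t$. Hence $K_t\subseteq K_0$, therefore $K_0^{\ast}\subseteq K_t^{\ast}$ and $V(K_t^{\ast})\ge V(K_0^{\ast})>0$. Plugging this into (\ref{e: volume}) gives $\frac{d}{dt}V(K_t)\le -2V(K_0^{\ast})$, and integrating together with $V(K_t)>0$ on $[0,T)$ forces $T\le V(K_0)/(2V(K_0^{\ast}))<\infty$.

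\emph{Vanishing of the area.} By (\ref{e: volume}) the function $t\mapsto V(K_t)$ is non-increasing, so $V_\infty:=\lim_{t\to T}V(K_t)$ exists; I would argue by contradiction and suppose $V_\infty>0$. On the one hand $h_{K_t}\le R:=\max_{\mathbb{S}^1}h_{K_0}$ by the monotonicity above, so $K_t\subseteq RB$. On the other hand, if $\min_{\mathbb{S}^1}h_{K_t}=\varepsilon$ is attained in the direction $u_0$, origin-symmetry puts $K_t$ inside the slab $\{\,|\langle x,u_0\rangle|\le\varepsilon\,\}$, whence $V(K_t)\le 2\varepsilon\cdot 2R$. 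Thus $\min_{\mathbb{S}^1}h_{K_t}\ge V_\infty/(4R)=:r>0$ uniformly on $[0,T)$.

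\emph{Reaching a contradiction with maximality.} With $0<r\le h_{K_t}\le R$ on $[0,T)$, Lemma~\ref{lem: upper bound for speed} bounds $\mathcal{G}_{K_t}$ uniformly from above, while Corollary~\ref{cor: Convexity is preserved} gives $\mathcal{G}_{K_t}\ge r^2\min_{\mathbb{S}^1}(\mathcal{G}/h^2)(\cdot,0)>0$; hence $\mathcal{S}_{K_t}$ lies between two positive constants on $[0,T)$. Then (\ref{e: asli}) is a uniformly parabolic scalar equation on $\mathbb{S}^1$ with coefficients bounded in $\mathcal{C}^0$, and a standard bootstrap of parabolic Schauder estimates yields uniform bounds on every $\mathcal{C}^k$ norm of $h_{K_t}$ on $[0,T)$. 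Consequently $h_{K_t}\to h_{K_T}$ in $\mathcal{C}^\infty$ with $K_T\in\mathcal{F}^2_e$, so the short-time existence for (\ref{e: asli}) with initial datum $K_T$ extends the solution beyond $T$, contradicting the maximality of $T$. Therefore $V_\infty=0$. The only step that is not elementary is this last regularity upgrade, but once $h$ and $\mathcal{S}$ are two-sided bounded the equation is non-degenerate parabolic, so the regularity theory (the same used for short-time existence in \cite{S}, cf. also Tso's argument \cite{Tso}) applies without difficulty; I would only take care that all estimates are uniform up to $t=T$.
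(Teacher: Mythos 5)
Your proof is correct. For the vanishing of the area you take essentially the paper's route (just spelled out more explicitly): assuming $\lim_{t\to T}V(K_t)>0$ you derive a uniform two-sided bound $0<r\le h_{K_t}\le R$, feed this into Corollary~\ref{cor: Convexity is preserved} and Lemma~\ref{lem: upper bound for speed} to get two-sided bounds on $\mathcal{G}$, and then bootstrap via Schauder (Krylov \cite{K}) to uniform $\mathcal{C}^k$ bounds, so that the solution extends past $T$, contradicting maximality. For finiteness of $T$, however, you take a genuinely different route. The paper invokes the comparison principle: a disk $B_{r_+(K_0)}\supseteq K_0$ evolves to a point in finite time $r_+(K_0)^4/4$ (the explicit solution), so $T\le r_+(K_0)^4/4$. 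You instead use the area evolution equation~(\ref{e: volume}) directly: since $h_{K_t}$ decreases pointwise, $K_t\subseteq K_0$, hence $K_t^{\ast}\supseteq K_0^{\ast}$ and $V(K_t^{\ast})\ge V(K_0^{\ast})>0$, giving $\frac{d}{dt}V(K_t)\le -2V(K_0^{\ast})$ and the explicit bound $T\le V(K_0)/(2V(K_0^{\ast}))$. Both are valid; the paper's barrier argument is the standard one for flows with explicit homogeneous-space solutions, while yours is an integral estimate that needs no explicit solution, only the area evolution equation and the inclusion-reversal property of polar duality. The two bounds on $T$ are different but both finite, which is all that is needed.
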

\begin{proof}
The first part of the claim follows from the comparison principle and the fact that the solution to (\ref{e: asli}) starting from a disk centered at the origin exists only on a finite time interval. Having established Corollary \ref{cor: Convexity is preserved}, and Lemma \ref{lem: upper bound for speed}, bounds on higher derivatives of the support function follows from Schauder theory \cite{K}, if $\lim\limits_{t\to T}V(K_t)\neq 0$. This in turn contradicts the maximality of $T$.
\end{proof}
\section{long time behavior}\label{sec: long time}
In this section we calculate the time derivative of $V(\Gamma K_t)/V(K_t)$ and we deduce the asymptotic behavior of the normalized solution, $\left(V(B)/V(K_t)\right)^{1/2}K_t$. We shall begin by rewriting the integral representation of the centroid body of $K\in\mathcal{K}^n_0$ in terms of the support function of the polar body $K^{\ast}.$ To this aim, we need first to introduce the radial function of convex body $K\in\mathcal{K}_0^n.$

The function $\rho_K:\mathbb{S}^{n-1}\to \mathbb{R}$ defined by
\[\rho_K(u):=\max\{\lambda\geq 0:\lambda u\in K\}\]
is called the radial function. This function parameterizes $\partial K$ over the unit sphere by
\[X_{K}=\rho(u)u:\mathbb{S}^{n-1}\to\mathbb{R}^n.\]
It can be shown that $\rho$ is a Lipchitz function. Moreover, $\rho_K(u)=1/h_{K^{\ast}}(u)$, see \cite[Theorem 1.7.6]{Schneider}. Thus, we may rewrite (\ref{e: support centroid}) for $K\in \mathcal{K}_0^n$ as follows
\begin{align*}
h_{\Gamma K}(u)=&\frac{1}{V(K)}\int_{K}|\langle u,x\rangle|dx=\frac{1}{(n+1)V(K)}\int_{\mathbb{S}^{n-1}}|\langle u,v\rangle|\rho_K^{n+1}(v)d\sigma(v)\\
=&\frac{1}{(n+1)V(K)}\int_{\mathbb{S}^{n-1}}|\langle u,v\rangle|h_{K^{\ast}}^{-(n+1)}(v)d\sigma(v).
\end{align*}
From this last integral representation of $h_{\Gamma K}$ it is evident that to calculate the time derivative of $V(\Gamma K_t)$, we would need first to calculate the time derivative of
$h_{K^{\ast}_t}.$
\begin{lemma}\label{lem: ev polar}
Let $K_t$ evolve by (\ref{e: asli}). Then polar bodies, $K^{\ast}_t$, evolve according to
\[\partial_t h_{K^\ast_t}(u)=h_{K^\ast_t}^4(u)\mathcal{S}_{K^\ast_t}(u).\]
\end{lemma}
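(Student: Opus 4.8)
The plan is to pass through the polar correspondence $\rho_{K^\ast}=1/h_K$ and translate the resulting radial motion of $\partial K^\ast_t$ into a motion of its support function. Since $\rho_{K^\ast_t}(u)=h_{K_t}^{-1}(u)$ as functions on $\mathbb S^1$, differentiating in time and inserting (\ref{e: asli}) gives
$$\partial_t\rho_{K^\ast_t}(u)=-\frac{\partial_t h_{K_t}(u)}{h_{K_t}^2(u)}=\left(\frac{1}{h_{K_t}^4\mathcal S_{K_t}}\right)(u),$$
so the boundary point $Y(u,t):=u/h_{K_t}(u)$ of $K^\ast_t$ moves purely radially with $\partial_t Y=\bigl(h_{K_t}^{-4}\mathcal S_{K_t}^{-1}\bigr)(u)\,u$.

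Next I would convert this into a motion of $h_{K^\ast_t}$, using the elementary fact that for any parameterization $Y(\cdot,t)$ of $\partial K^\ast_t$ one has $\partial_t h_{K^\ast_t}(\nu)=\langle\partial_t Y,\nu\rangle$ evaluated at the outer normal $\nu$ of $\partial K^\ast_t$ at $Y$ (the tangential component of $\partial_t Y$ and the drift of $\nu$ cancel). Let $X_K(\theta)=h(\theta)(\cos\theta,\sin\theta)+h'(\theta)(-\sin\theta,\cos\theta)$ denote the boundary point of $K$ with outer normal $u=(\cos\theta,\sin\theta)$. The line $\{z:\langle z,u/h_{K_t}(u)\rangle=1\}$ supports $K_t$ at $X_{K_t}(u)$, hence $Y(u,t)\in\partial K^\ast_t$, and standard polar duality for smooth strictly convex bodies identifies its outer normal as $\bar\nu:=X_{K_t}(u)/|X_{K_t}(u)|$. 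Therefore, abbreviating $|X|:=|X_{K_t}(u)|$ and using $\langle u,X_{K_t}(u)\rangle=h_{K_t}(u)$ and $|X|^2=h_{K_t}^2+(h_{K_t}')^2$,
$$\partial_t h_{K^\ast_t}(\bar\nu)=\bigl(h_{K_t}^{-4}\mathcal S_{K_t}^{-1}\bigr)(u)\,\langle u,\bar\nu\rangle=\frac{1}{h_{K_t}^4\mathcal S_{K_t}}\cdot\frac{h_{K_t}}{|X|}=\frac{1}{h_{K_t}^3\mathcal S_{K_t}\,|X|},$$
and likewise $h_{K^\ast_t}(\bar\nu)=\langle Y(u,t),\bar\nu\rangle=1/|X|$.

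It then remains to show this equals $h_{K^\ast_t}^4(\bar\nu)\,\mathcal S_{K^\ast_t}(\bar\nu)$, i.e. to establish the centro-affine reciprocity $\mathcal S_{K^\ast}(\bar\nu)=|X|^3/(h_K^3\mathcal S_K)$ at dual boundary points. I would parameterize $\partial K^\ast$ by $\theta$ via $Y(\theta)=u/h_K(u)$ and compute two things: first, its arc-length element is $ds_{K^\ast}=|\partial_\theta Y|\,d\theta=(|X|/h_K^2)\,d\theta$, since $\partial_\theta Y=(h_K u^\perp-h_K' u)/h_K^2$; second, the normal angle $\bar\phi$ of $\partial K^\ast$, determined by $e^{i\bar\phi}=X_K/|X|$, satisfies $d\bar\phi/d\theta=h_K\mathcal S_K/|X|^2$, obtained by differentiating $e^{i\bar\phi}=X_K/|X|$ and using $\partial_\theta X_K=\mathcal S_K u^\perp$. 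Dividing, the radius of curvature of $\partial K^\ast$ satisfies $\mathcal S_{K^\ast}(\bar\nu)=ds_{K^\ast}/d\bar\phi=|X|^3/(h_K^3\mathcal S_K)$. Combining, $h_{K^\ast_t}^4(\bar\nu)\,\mathcal S_{K^\ast_t}(\bar\nu)=|X|^{-4}\cdot|X|^3 h_{K_t}^{-3}\mathcal S_{K_t}^{-1}=\partial_t h_{K^\ast_t}(\bar\nu)$; since $\bar\nu$ runs over all of $\mathbb S^1$ as $u$ does, the asserted evolution equation holds at every direction.

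The main obstacle is this last step: pinning down the correct dual-point correspondence between directions of $\partial K$ and of $\partial K^\ast$, and carrying out the change of variables for $\mathcal S_{K^\ast}$ without sign errors. Once the reciprocity $h_K^3\mathcal S_K\cdot h_{K^\ast}^3\mathcal S_{K^\ast}=1$ at dual points is in hand, the rest is a short computation.
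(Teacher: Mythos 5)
Your proof is correct. The paper omits its own argument (pointing to the analogous \cite[Theorem 2.2]{Ivaki3}), and yours follows the expected route: differentiate $\rho_{K^*_t}=1/h_{K_t}$, convert the purely radial speed of $\partial K^*_t$ to a normal speed via the dual-point correspondence $u\mapsto\bar\nu=X_{K_t}(u)/|X_{K_t}(u)|$, and reduce everything to the planar centro-affine reciprocity $(h_K^3\mathcal{S}_K)(u)\cdot(h_{K^*}^3\mathcal{S}_{K^*})(\bar\nu)=1$ at dual boundary points. The only substantive difference is that you re-derive this reciprocity from scratch via the computation of $ds_{K^*}/d\theta$ and $d\bar\phi/d\theta$, whereas the paper treats it as known (it is the planar case of the Salkowski--Kaltenbach--Hug identity, \cite[Theorem 2.8]{Hug}), so your version is more self-contained at the cost of an extra explicit change-of-variables calculation.
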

Employing the evolution equation of polar bodies was first introduced by Stancu \cite{S} in the context of centro-affine normal flows.
In \cite{Ivaki3, IS} it was shown that the evolution equation of polar bodies combined with Tso's trick and Salkowski-Kaltenbach-Hug identity (see \cite[Theorem 2.8]{Hug}) provide a useful tool for obtaining regularity of solutions to a class of geometric flows. Proof of Lemma \ref{lem: ev polar} is omitted because of its similarity to the proof of \cite[Theorem 2.2]{Ivaki3}. We will now turn to the evolution equation of $h_{\Gamma K_t}.$
\begin{lemma}\label{lem: ev support of centroid body}
Let $K_t$ evolve by (\ref{e: asli}). Then centroid bodies, $\Gamma K_t$, evolve according to
\[\partial_t h_{\Gamma K_t}(u)=\frac{2V(K_t^{\ast})}{V(K_t)}h_{\Gamma K_t}(u)-\frac{2}{V(K_t)}h_{\Pi K_{t}^{\ast}}(u).\]
\end{lemma}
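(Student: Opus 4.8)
The plan is to differentiate the integral representation
\[
h_{\Gamma K_t}(u)=\frac{1}{(n+1)V(K_t)}\int_{\mathbb{S}^{n-1}}|\langle u,v\rangle|\,h_{K_t^{\ast}}^{-(n+1)}(v)\,d\sigma(v)
\]
in time (here $n=2$), using the product rule on the three $t$-dependent pieces: the prefactor $1/V(K_t)$, and the integrand $h_{K_t^{\ast}}^{-3}$. For the prefactor I would invoke \eqref{e: volume}, which gives $\frac{d}{dt}V(K_t)=-2V(K_t^{\ast})$, so differentiating $1/V(K_t)$ produces a factor $\frac{2V(K_t^{\ast})}{V(K_t)^2}$; pairing this with the (undifferentiated) integral reconstructs $h_{\Gamma K_t}$ and yields the first term $\frac{2V(K_t^{\ast})}{V(K_t)}h_{\Gamma K_t}(u)$.

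For the second term I would differentiate $h_{K_t^{\ast}}^{-3}$ and use Lemma~\ref{lem: ev polar}, which says $\partial_t h_{K_t^{\ast}}=h_{K_t^{\ast}}^{4}\mathcal{S}_{K_t^{\ast}}$. Thus $\partial_t\bigl(h_{K_t^{\ast}}^{-3}\bigr)=-3h_{K_t^{\ast}}^{-4}\partial_t h_{K_t^{\ast}}=-3\,\mathcal{S}_{K_t^{\ast}}$. Plugging this in, the contribution of the differentiated integrand is
\[
\frac{1}{3V(K_t)}\int_{\mathbb{S}^{1}}|\langle u,v\rangle|\,\bigl(-3\,\mathcal{S}_{K_t^{\ast}}(v)\bigr)\,d\sigma(v)
=-\frac{1}{V(K_t)}\int_{\mathbb{S}^{1}}|\langle u,v\rangle|\,\mathcal{S}_{K_t^{\ast}}(v)\,d\sigma(v).
\]
The remaining task is to recognize this last integral as $2\,h_{\Pi K_t^{\ast}}(u)$. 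This follows from the defining formula \eqref{e: support projection} for the projection body together with the fact that, since $K_t^{\ast}\in\mathcal{C}^2_+$, its surface area measure has density $\mathcal{S}_{K_t^{\ast}}$ with respect to $\sigma$ (as recalled in Section~2): parametrizing $\partial K_t^{\ast}$ by the Gauss map turns $\int_{\partial K_t^{\ast}}|\langle u,w\rangle|\,dw$ into $\int_{\mathbb{S}^{1}}|\langle u,v\rangle|\,\mathcal{S}_{K_t^{\ast}}(v)\,d\sigma(v)$, and $h_{\Pi K_t^{\ast}}(u)=\tfrac12$ of that integral. Combining the two contributions gives exactly
\[
\partial_t h_{\Gamma K_t}(u)=\frac{2V(K_t^{\ast})}{V(K_t)}h_{\Gamma K_t}(u)-\frac{2}{V(K_t)}h_{\Pi K_t^{\ast}}(u),
\]
as claimed.

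The computation is essentially routine; the only points requiring a little care are the differentiation under the integral sign (justified by the smoothness and strict convexity of $K_t$, hence of $K_t^{\ast}$, on compact time subintervals, which keeps the integrand and its $t$-derivative uniformly bounded), and the change of variables identifying the surface-integral form of $h_{\Pi K_t^{\ast}}$ with its spherical-integral form. I would expect the bookkeeping of constants — the $\tfrac{1}{n+1}=\tfrac13$, the $3$ from the power rule, and the $\tfrac12$ in the projection-body definition — to be the main place where an error could creep in, so I would track those factors explicitly rather than absorb them.
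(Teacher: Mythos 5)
Your proposal reproduces the paper's own argument essentially verbatim: differentiate the spherical-integral representation of $h_{\Gamma K_t}$, use \eqref{e: volume} for the $1/V(K_t)$ prefactor and Lemma~\ref{lem: ev polar} to get $\partial_t\bigl(h_{K_t^\ast}^{-3}\bigr)=-3\,\mathcal{S}_{K_t^\ast}$, then recognize the resulting integral as $2h_{\Pi K_t^\ast}$ via the surface-area-measure density. The justification for differentiating under the integral sign is also the one the paper gives, so the two proofs coincide in both substance and bookkeeping.
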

\begin{proof}
We will use the evolution equation of $V(K_t)$ in Lemma \ref{lem: ev equations} and the evolution equation of $h_{K_t^{\ast}}$ stated in Lemma \ref{lem: ev polar}.
Since for a small neighborhood of $t$, $\partial_t h_{K_t^{\ast}} $ are bounded and $h_{K_t^{\ast}}$ are bounded from above and from below by positive numbers, letting $\partial_t$ to commute with $\int_{\mathbb{S}^1}$ is justified.
\begin{align*}
&\partial_th_{\Gamma K_t}(u)\\
&=\partial_t\left(\frac{1}{3V(K_t)}\int_{\mathbb{S}^{1}}|\langle u,v\rangle|h_{K^{\ast}_t}^{-3}(v)d\sigma(v)\right)\\
&=\frac{-\frac{d}{dt}V(K_t)}{3V^2(K_t)}\int_{\mathbb{S}^{1}}|\langle u,v\rangle|h_{K^{\ast}_t}^{-3}(v)d\sigma(v)+
\frac{1}{3V(K_t)}\int_{\mathbb{S}^{1}}|\langle u,v\rangle|\partial_t\left(h_{K^{\ast}_t}^{-3}(v)\right)d\sigma(v)\\
&=\frac{2V(K_t^{\ast})}{3V^2(K_t)}\int_{\mathbb{S}^{1}}|\langle u,v\rangle|h_{K^{\ast}_t}^{-3}(v)d\sigma(v)-
\frac{1}{V(K_t)}\int_{\mathbb{S}^{1}}|\langle u,v\rangle|\mathcal{S}_{K_{t}^{\ast}}(v)d\sigma(v)\\
&=\frac{2V(K_t^{\ast})}{V(K_t)}h_{\Gamma K_t}(u)-\frac{2}{V(K_t)}h_{\Pi K_{t}^{\ast}}(u).
\end{align*}
On the last line, we used the fact that we may rewrite (\ref{e: support projection}) for $K\in\mathcal{C}^2_{+}$ as follows
\begin{equation*}
h_{\Pi K}(u)=\frac{1}{2}\int_{\partial K}|\langle u,v\rangle|dv=\frac{1}{2}\int_{\mathbb{S}^1}|\langle u,v\rangle|\mathcal{S}(v)d\sigma(v).
\end{equation*}
\end{proof}
\begin{lemma}\label{lem: ev area of centroid body}
As $K_t$ evolve by (\ref{e: asli}), centroid bodies, $\Gamma K_t$, evolve according to
\[\frac{d}{dt} V(\Gamma K_t)=\frac{4V(K_t^{\ast})}{V(K_t)}V(\Gamma K_t)-\frac{4}{V(K_t)}V(\Gamma K_t,\Pi K_{t}^{\ast}).\]
\end{lemma}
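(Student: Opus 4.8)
The plan is to reduce everything to Lemma \ref{lem: ev support of centroid body} via the first variation of area. Recall that for any sufficiently regular family of convex bodies $L_t$ in the plane, the area evolves by
\[
\frac{d}{dt}V(L_t)=\int_{\mathbb{S}^1}\partial_t h_{L_t}(u)\,dS_{L_t}(u),
\]
and when $L_t\in\mathcal{C}^2_+$ this reads $\frac{d}{dt}V(L_t)=\int_{\mathbb{S}^1}\partial_t h_{L_t}\,\mathcal{S}_{L_t}\,d\theta$. (One may also derive this directly by differentiating $V(L_t)=\frac12\int_{\mathbb{S}^1}h_{L_t}\mathcal{S}_{L_t}\,d\theta$ and using that the operator $f\mapsto f_{\theta\theta}+f$ is self-adjoint on $\mathbb{S}^1$, so that $\int_{\mathbb{S}^1}h_{L_t}\,\partial_t\mathcal{S}_{L_t}\,d\theta=\int_{\mathbb{S}^1}\partial_t h_{L_t}\,\mathcal{S}_{L_t}\,d\theta$.) By Petty's theorem, $\Gamma K_t$ is of class $\mathcal{C}^2_+$ for every $t$, so this formula applies with $L_t=\Gamma K_t$.

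Substituting the expression for $\partial_t h_{\Gamma K_t}$ from Lemma \ref{lem: ev support of centroid body} gives
\[
\frac{d}{dt}V(\Gamma K_t)=\frac{2V(K_t^{\ast})}{V(K_t)}\int_{\mathbb{S}^1}h_{\Gamma K_t}\,dS_{\Gamma K_t}-\frac{2}{V(K_t)}\int_{\mathbb{S}^1}h_{\Pi K_t^{\ast}}\,dS_{\Gamma K_t}.
\]
The first integral equals $2V(\Gamma K_t)$ because $V(L)=\frac12\int_{\mathbb{S}^1}h_L\,dS_L$ in the plane, and the second integral equals $2V(\Gamma K_t,\Pi K_t^{\ast})$ by the definition of the planar mixed volume $V(K,L)=\frac12\int_{\mathbb{S}^1}h_L\,dS_K$. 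Collecting the terms yields exactly
\[
\frac{d}{dt}V(\Gamma K_t)=\frac{4V(K_t^{\ast})}{V(K_t)}V(\Gamma K_t)-\frac{4}{V(K_t)}V(\Gamma K_t,\Pi K_t^{\ast}).
\]

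The only point needing a word of justification is the legitimacy of the first variation formula (equivalently, differentiating under the integral sign). As already observed in the proof of Lemma \ref{lem: ev support of centroid body}, on a small time interval $\partial_t h_{K_t^{\ast}}$ is bounded and $h_{K_t^{\ast}}$ is bounded between positive constants; since $h_{\Gamma K_t}(u)$ is the explicit integral over $\mathbb{S}^1$ of $|\langle u,v\rangle|h_{K_t^{\ast}}^{-3}(v)$, the map $t\mapsto h_{\Gamma K_t}$ is $\mathcal{C}^1$ in $t$ uniformly in $u$ and $\Gamma K_t$ stays uniformly $\mathcal{C}^2_+$, so $S_{\Gamma K_t}$ varies continuously and the formula is valid. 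I expect no genuine obstacle here: the lemma is pure bookkeeping on top of Lemma \ref{lem: ev support of centroid body}, and this regularity remark can be dispatched in a single sentence referring back to the estimates already in place.
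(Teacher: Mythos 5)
Your proposal is correct and takes essentially the same route as the paper: the paper differentiates $V(\Gamma K_t)=\frac12\int_{\mathbb{S}^1}h_{\Gamma K_t}\bigl((h_{\Gamma K_t})_{\theta\theta}+h_{\Gamma K_t}\bigr)\,d\theta$, uses integration by parts (exactly the self-adjointness you note parenthetically) to obtain the first variation formula, and then substitutes Lemma \ref{lem: ev support of centroid body} and identifies the two resulting integrals with $2V(\Gamma K_t)$ and $2V(\Gamma K_t,\Pi K_t^{\ast})$. Your regularity remark about $\Gamma K_t$ being $\mathcal{C}^2_+$ matches the paper's appeal to Petty's theorem and Schneider, so there is no gap.
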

\begin{proof}
Recall that if $K\in\mathcal{K}^n$ is $\mathcal{C}_+^2$, then $h_{K}$ is two times continuously differentiable (see \cite[p. 106]{Schneider}).
As $\Gamma K_t\in\mathcal{C}_+^2$ we can write
\[V(\Gamma K_t)=\frac{1}{2}\int_{\mathbb{S}^1}h_{\Gamma K_t}\left((h_{\Gamma K_t})_{\theta\theta}+h_{\Gamma K_t}\right)d\theta.\]
Furthermore, convex bodies $\Pi K_t^{\ast}=2(K_t^{\ast})^{\pi/2}$ are also $\mathcal{C}_+^2$.
The claim now follows from Lemma \ref{lem: ev support of centroid body}, integration by parts, and the definition of the mixed volume.
\end{proof}
\begin{corollary}\label{cor: monotonicity of petty-centroid}
$\frac{d}{dt}\frac{V(\Gamma K_t)}{V(K_t)}\leq 0$ with equality if and only if $K_t$ is an origin-centered ellipse. In particular, if $K\in\mathcal{K}^2_e$ is smooth and minimizes $\frac{V(\Gamma K)}{V(K)}$, then $K$ must be an origin-centered ellipse.
\end{corollary}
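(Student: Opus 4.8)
The plan is to differentiate the ratio with the quotient rule, using Lemma \ref{lem: ev area of centroid body} together with (\ref{e: volume}), and then to express the whole time derivative through the single affine-invariant quantity $V(\Lambda K_t^{\ast})/V(K_t^{\ast})$. A direct computation yields
\[
\frac{d}{dt}\,\frac{V(\Gamma K_t)}{V(K_t)}=\frac{6\,V(K_t^{\ast})\,V(\Gamma K_t)-4\,V(\Gamma K_t,\Pi K_t^{\ast})}{V(K_t)^{2}},
\]
so it suffices to evaluate the two quantities $V(\Gamma K_t)$ and $V(\Gamma K_t,\Pi K_t^{\ast})$ in a common language.

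First I would record the identity $V(\Lambda L,L)=V(L)$ valid for every $L\in\mathcal{K}_e^{2}$: since the Santal\'o point of $L$ is the origin, (\ref{e: def Lambda}) reads $dS_{\Lambda L}=\tfrac{V(L)}{V(L^{\ast})}h_L^{-3}\,d\sigma$, and therefore
\[
V(\Lambda L,L)=\frac12\int_{\mathbb{S}^{1}}h_L\,dS_{\Lambda L}=\frac{V(L)}{2V(L^{\ast})}\int_{\mathbb{S}^{1}}h_L^{-2}\,d\sigma=V(L),
\]
using $V(L^{\ast})=\tfrac12\int_{\mathbb{S}^{1}}h_L^{-2}\,d\theta$. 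Next, the centroid of $K_t$ is the origin, so Lutwak's identity (\ref{e: centroid and projection of curvature image}) gives $\Gamma K_t=\tfrac{2}{3V(K_t^{\ast})}\,\Pi\Lambda K_t^{\ast}$; combining this with $\Pi M=2M^{\pi/2}$ (Remark \ref{rem: projection dimension two}), the rotation-invariance of (mixed) volumes, and the identity just recorded applied to $L=K_t^{\ast}$, one obtains
\[
V(\Gamma K_t)=\frac{16\,V(\Lambda K_t^{\ast})}{9\,V(K_t^{\ast})^{2}},\qquad V(\Gamma K_t,\Pi K_t^{\ast})=\frac{8}{3}.
\]
Substituting these, the derivative collapses to
\[
\frac{d}{dt}\,\frac{V(\Gamma K_t)}{V(K_t)}=\frac{32}{3\,V(K_t)^{2}}\left(\frac{V(\Lambda K_t^{\ast})}{V(K_t^{\ast})}-1\right).
\]

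The sign is then immediate from Minkowski's mixed volume inequality applied to the pair $(\Lambda K_t^{\ast},K_t^{\ast})$: since $V(\Lambda K_t^{\ast},K_t^{\ast})^{2}\ge V(\Lambda K_t^{\ast})\,V(K_t^{\ast})$ while the left-hand side equals $V(K_t^{\ast})^{2}$, we get $V(\Lambda K_t^{\ast})\le V(K_t^{\ast})$, i.e.\ $\frac{d}{dt}\frac{V(\Gamma K_t)}{V(K_t)}\le 0$. For the equality statement, equality forces equality in Minkowski's inequality, hence $\Lambda K_t^{\ast}=c\,K_t^{\ast}$ for some $c>0$, and I would show this happens precisely for ellipses. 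Since $K_t^{\ast}\in\mathcal{C}^{2}_{+}$ (the polar of a body in $\mathcal{F}_e^{2}$), (\ref{e: def Lambda}) turns $\Lambda K_t^{\ast}=cK_t^{\ast}$ into $h^{3}(h_{\theta\theta}+h)=\mathrm{const}$ for $h:=h_{K_t^{\ast}}$, i.e.\ $h_{\theta\theta}+h=\lambda h^{-3}$ with $\lambda>0$; multiplying by $h_\theta$ and integrating once gives $h_\theta^{2}+h^{2}+\lambda h^{-2}=\mathrm{const}$, after which $w:=h^{2}$ satisfies the \emph{linear} equation $w_{\theta\theta}+4w=\mathrm{const}$, so $h^{2}=E+A\cos 2\theta+B\sin 2\theta$, which is exactly the squared support function of an origin-centered ellipse. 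Conversely, $V(\Lambda L)/V(L)$ is $GL(2)$-invariant and equals $1$ at $B$ (as $\Lambda B=B$, read off from (\ref{e: def Lambda})), hence equals $1$ for every ellipse; so equality in the corollary holds precisely when $K_t$ is an origin-centered ellipse.

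Finally, for the last sentence: if $K\in\mathcal{F}_e^{2}$ minimizes $\frac{V(\Gamma K)}{V(K)}$, let $\{K_t\}$ solve (\ref{e: asli}) with $K_0=K$. Each $K_t$ lies in $\mathcal{K}_e^{2}$, so minimality gives $\frac{V(\Gamma K_t)}{V(K_t)}\ge\frac{V(\Gamma K_0)}{V(K_0)}$, while the monotonicity just proved gives the reverse inequality; thus the ratio is constant, $\frac{d}{dt}\frac{V(\Gamma K_t)}{V(K_t)}\equiv 0$, and by the equality case $K_0=K$ is an origin-centered ellipse. The step I expect to be the main obstacle is precisely this equality analysis, i.e.\ passing from $\Lambda K_t^{\ast}=cK_t^{\ast}$ to ``$K_t$ is an ellipse'': everything else is bookkeeping with (\ref{e: centroid and projection of curvature image}), (\ref{e: def Lambda}), $\Pi M=2M^{\pi/2}$ and Minkowski's inequality, but this last step genuinely requires either integrating the centro-affine ODE above (and checking that every $2\pi$-periodic positive solution is an ellipse support function) or importing Petty's planar classification \cite{P1}, in which case the scaling factor $c$ must be disposed of separately.
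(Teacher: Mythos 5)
Your computation is correct and follows the same skeleton as the paper: apply the quotient rule to the ratio, feed in Lemma~\ref{lem: ev area of centroid body} and~(\ref{e: volume}), translate everything through Lutwak's identity~(\ref{e: centroid and projection of curvature image}) and $\Pi M=2M^{\pi/2}$, reach the formula $\tfrac{32}{3V(K_t)^2}\bigl(V(\Lambda K_t^{\ast})/V(K_t^{\ast})-1\bigr)$, and apply Minkowski's inequality to the pair $(\Lambda K_t^{\ast},K_t^{\ast})$ via $V(\Lambda L,L)=V(L)$. Your explicit evaluations $V(\Gamma K_t)=16V(\Lambda K_t^{\ast})/(9V(K_t^{\ast})^2)$ and $V(\Gamma K_t,\Pi K_t^{\ast})=8/3$ are a slightly different (and arguably cleaner) route to the paper's intermediate display~(\ref{ie: asym behavior1}), and they check out.

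The genuine departure from the paper is your treatment of the equality case. The paper first uses $V(\Lambda L)=V(L)$ together with the homothety condition from equality in Minkowski to force $\Lambda L=L$ (for $L$ origin-symmetric), and then invokes Petty's Lemma~8.1 from~\cite{P1} as a black box to conclude $L$ is an origin-centered ellipse. You instead integrate the centro-affine ODE $h^3(h_{\theta\theta}+h)=\text{const}$ directly: multiplying by $h_\theta$ gives the first integral $h_\theta^2+h^2+\lambda h^{-2}=\text{const}$, after which $w=h^2$ satisfies the linear equation $w_{\theta\theta}+4w=\text{const}$, so $h^2=E+A\cos 2\theta+B\sin 2\theta$, which is the squared support function of an origin-centered ellipse. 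This is self-contained and handles the scaling factor $c$ automatically (the ODE argument never needs $c=1$), so your worry about ``disposing of $c$ separately'' only arises if you take the Petty route, not yours. One bookkeeping point worth making explicit: the ODE identifies $K_t^{\ast}$ as an origin-centered ellipse, and you need the (standard) fact that the polar of an origin-centered ellipse is again an origin-centered ellipse to conclude the same for $K_t$; the paper sidesteps this because Petty's lemma is stated in a form that can be applied directly. Your final paragraph proving the ``in particular'' sentence by running the flow from a minimizer and using monotonicity plus the equality case is exactly the intended argument.
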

\begin{proof} Lemmas \ref{lem: ev equations} and \ref{lem: ev area of centroid body} yield
\begin{align*}
\frac{d}{dt}\frac{V(\Gamma K_t)}{V(K_t)}=&\frac{6V(K_t^{\ast})}{V^2(K_t)}V(\Gamma K_t)-\frac{4}{V^2(K_t)}V(\Gamma K_t,\Pi K_{t}^{\ast}).
\end{align*}
Replacing $\Gamma K_t$ on the right-hand side by its equivalent expression from (\ref{e: centroid and projection of curvature image}) will lead us to
\begin{align}\label{e: last step time derv centroid}
\frac{d}{dt}\frac{V(\Gamma K_t)}{V(K_t)}=&\frac{8}{3V^2(K_t)V(K_t^{\ast})}\left(V(\Pi\Lambda K_t^{\ast})-V(\Pi\Lambda K_t^{\ast},\Pi K_{t}^{\ast})\right).
\end{align}
By Remarks \ref{rem: projection dimension two} and \ref{re: curvature image symmetric} we may rewrite
(\ref{e: last step time derv centroid}) as follows
\begin{align}\label{ie: asym behavior1}
\frac{d}{dt}\frac{V(\Gamma K_t)}{V(K_t)}=&\frac{32}{3V^2(K_t)V(K_t^{\ast})}\left(V((\Lambda K_t^{\ast})^{\pi/2})-V(\Lambda (K_t^{\ast})^{\pi/2},(K_{t}^{\ast})^{\pi/2})\right)\nonumber\\
=&\frac{32}{3V^2(K_t)V(K_t^{\ast})}\left(V((\Lambda K_t^{\ast})^{\pi/2})-V((K_{t}^{\ast})^{\pi/2})\right)\nonumber\\
=&\frac{32}{3V^2(K_t)V(K_t^{\ast})}\left(V(\Lambda K_t^{\ast})-V(K_t^{\ast})\right).
\end{align}
Here we used the easily established identity $V(\Lambda L,L)=V(L)$ for $L\in \mathcal{K}^2$ (This identity follows from the definition of $\Lambda L$ and the definition of the mixed volume, see \cite[Lemma 3]{Lutwak}.). Notice that by the Minkowski inequality $V^2(L)=V^2(\Lambda L,L)\geq V(L)V(\Lambda L).$ Therefore, $V(L)\geq V(\Lambda L)$ for all $L\in \mathcal{K}^2,$ with equality if and only if $\Lambda L$ is a translation of $L$. For $L\in\mathcal{K}^2_e$, equality is achieved only if $\Lambda L=L$, as $\Lambda L\in\mathcal{K}^2_e$. Moreover, \cite[Lemma 8.1]{P1} states that $\Lambda L=L$ if and only if $L$ is an origin-centered ellipse.
\end{proof}
\begin{proposition}\label{prop: 1 step to asym shape} As $K_t$ evolve by (\ref{e: asli}) the following limit holds:
\[\limsup_{t\to T}\frac{1}{V(K_t)V^2(K_t^{\ast})}\left(V(\Lambda K_t^{\ast})-V(K_t^{\ast})\right)=0.\]
\end{proposition}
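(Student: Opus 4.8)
The plan is to integrate the monotonicity formula from Corollary \ref{cor: monotonicity of petty-centroid} in time and exploit that $V(\Gamma K_t)/V(K_t)$ is bounded below. Recall from \eqref{ie: asym behavior1} that
\[
\frac{d}{dt}\frac{V(\Gamma K_t)}{V(K_t)}=\frac{32}{3V^2(K_t)V(K_t^{\ast})}\left(V(\Lambda K_t^{\ast})-V(K_t^{\ast})\right)\leq 0,
\]
so the integrand in the proposition is, up to the positive factor $32/3$, exactly $-\frac{d}{dt}\frac{V(\Gamma K_t)}{V(K_t)}$. Since the Busemann-Petty centroid inequality \eqref{e: ineq centroid} (in the planar case, $(4/(3\pi))^2$) gives $V(\Gamma K_t)/V(K_t)\geq (4/(3\pi))^2>0$ for all $t\in[0,T)$, and this quantity is non-increasing, the limit $\lim_{t\to T}V(\Gamma K_t)/V(K_t)=:\ell$ exists and is finite and positive. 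Integrating from $0$ to $T$,
\[
\int_0^T\frac{1}{V(K_t)V^2(K_t^{\ast})}\left(V(K_t^{\ast})-V(\Lambda K_t^{\ast})\right)dt=\frac{3}{32}\left(\frac{V(\Gamma K_0)}{V(K_0)}-\ell\right)<\infty,
\]
so the integrand is an integrable non-negative function of $t$ on $[0,T)$.

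From integrability alone one only gets that the integrand tends to zero along a sequence $t_k\to T$; to upgrade this to a genuine $\limsup$ statement I would argue that the integrand has bounded variation near $T$, or more simply show it is uniformly continuous (equivalently, has bounded time-derivative) on a subinterval $[t_0,T)$. For this I would use the regularity machinery already in place: away from $T$ the support functions $h_{K_t}$, $h_{K_t^\ast}$ are smooth with bounds depending only on geometric quantities, and the various volumes $V(K_t)$, $V(K_t^\ast)$, $V(\Lambda K_t^\ast)$ together with their time-derivatives can be estimated. A cleaner route, which I would prefer, is to normalize: set $\tilde K_t=(V(B)/V(K_t))^{1/2}K_t$ so that all the bodies have fixed area $\pi$; then $V(\Gamma K_t)/V(K_t)$, $V(K_t^\ast)V(K_t)$, and the ratio $V(\Lambda K_t^\ast)/V(K_t^\ast)$ are all scale-invariant, and one rewrites the integrand purely in terms of the normalized bodies times an explicit power of $V(K_t)$. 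Because $V(K_t)\to 0$ (Proposition \ref{prop: area goes to zero}), one must check that the scale-invariant part, not just the product, goes to zero — and this is where I expect the real work to lie.

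The main obstacle, then, is ruling out the possibility that the scale-invariant defect $1-V(\Lambda\tilde K_t^\ast)/V(\tilde K_t^\ast)$ fails to go to zero while the prefactor $V(K_t)^{\text{(something)}}$ compensates. I would handle this by a compactness argument: the normalized bodies $\tilde K_t^\ast$, after acting by suitable $\Phi_t\in SL(2)$, lie in a fixed compact family (using John's ellipsoid as in the background section, $1\leq h\leq\sqrt2$ after a linear map), so along any sequence $t_k\to T$ a subsequence converges in the Hausdorff metric to some limiting body; continuity of $\Gamma$, of $V(\cdot^\ast)$, and of $\Lambda$ (the latter via continuity of the solution to the even Minkowski problem) lets one pass to the limit. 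Combined with the fact that the time-integral of the defect is finite, I would conclude that the defect itself must tend to zero, not merely along a subsequence but as a genuine limit superior — completing the proof. The delicate point worth isolating as a lemma is the joint continuity/boundedness needed to convert the finite-integral bound into a pointwise $\limsup=0$ statement; everything else is a direct consequence of Corollary \ref{cor: monotonicity of petty-centroid} and the planar Busemann-Petty inequality.
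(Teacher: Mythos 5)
Your proposal contains a significant algebra error that derails the whole argument, and you have also missed the structural idea the paper exploits.

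First, the algebra. Equation (\ref{ie: asym behavior1}) reads
\[
\frac{d}{dt}\frac{V(\Gamma K_t)}{V(K_t)}=\frac{32}{3\,V^2(K_t)V(K_t^{\ast})}\bigl(V(\Lambda K_t^{\ast})-V(K_t^{\ast})\bigr),
\]
with $V^2(K_t)V(K_t^{\ast})$ in the denominator, whereas the quantity in Proposition~\ref{prop: 1 step to asym shape} has $V(K_t)V^2(K_t^{\ast})$. These are not proportional; they differ by the factor $V(K_t)/V(K_t^{\ast})$, which tends to $0$ as $t\to T$. Your displayed integral identity carries this mistake and so does not follow from Corollary~\ref{cor: monotonicity of petty-centroid}. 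Second, even if the integrand were the time derivative of $V(\Gamma K_t)/V(K_t)$, integrability of a nonnegative function on a \emph{finite} interval $[0,T)$ tells you nothing about its pointwise behavior near $T$ (take $f\equiv 1$); your ``tends to zero along a sequence'' step is false, and the subsequent compactness/normalization sketch is vague and does not close the gap. In particular, a compactness argument by itself would only produce a subsequential limit of $V(\Lambda\tilde K_{t_k}^\ast)/V(\tilde K_{t_k}^\ast)$, with no reason for that limit to equal $1$; that is exactly what is being proved here, so the argument would be circular.

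The paper's proof is a short argument by contradiction that turns the ``extra'' factor $V(K_t^\ast)/V(K_t)$ to its advantage via (\ref{e: volume}): since $\frac{d}{dt}\ln V(K_t)=-2V(K_t^{\ast})/V(K_t)$, the derivative formula can be rewritten as
\[
\frac{d}{dt}\frac{V(\Gamma K_t)}{V(K_t)}=\frac{32}{3}\cdot\frac{V(K_t^{\ast})}{V(K_t)}\cdot\frac{V(\Lambda K_t^{\ast})-V(K_t^{\ast})}{V(K_t)V^2(K_t^{\ast})}.
\]
If the proposition's quantity stayed $\le-\varepsilon$ on some $(T-\delta,T)$, one would get $\frac{d}{dt}\frac{V(\Gamma K_t)}{V(K_t)}\le\frac{16}{3}\varepsilon\,\frac{d}{dt}\ln V(K_t)$, and since $V(K_t)\to 0$ (Proposition~\ref{prop: area goes to zero}) integration forces $V(\Gamma K_t)/V(K_t)\to-\infty$, which is absurd. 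The point you missed is precisely this pairing of the mismatched factor with $\frac{d}{dt}\ln V(K_t)$; once noticed, no compactness or uniform-continuity input is needed at all.
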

\begin{proof}
Suppose on the contrary, that there exist $\varepsilon,~\delta>0$ such that for all $t\in (T-\delta,T)$ we have
\[\frac{\left(V(\Lambda K_t^{\ast})-V(K_t^{\ast})\right)}{V(K_t)V^2(K_t^{\ast})}\leq -\varepsilon.\]
Therefore, by Lemma \ref{lem: ev equations} and inequality (\ref{ie: asym behavior1}), we get
\[\frac{d}{dt}\frac{V(\Gamma K_t)}{V(K_t)}\leq \frac{16}{3}\varepsilon\frac{d}{dt}\ln(V(K_t)).\]
Since $\lim\limits_{t\to T}V(K_t)=0$ by Lemma \ref{prop: area goes to zero}, we deduce that
\[\lim_{t\to T} \frac{V(\Gamma K_t)}{V(K_t)}=-\infty.\]
However $V(\Gamma K_t)/V(K_t)$ is manifestly positive.
\end{proof}
\subsection{Convergence of a subsequence of the normalized solution in the Hausdorff metric}\label{sec: Convergence in the Hausdorff metric}
We begin this section by recalling  the weak continuity of surface area measures. If $\{K_i\}_i$ is a sequence in $\mathcal{K}^n_0$, then
\[\lim_{i\to\infty}K_i=K\in\mathcal{K}_0^2\Rightarrow \lim_{i\to\infty}S_{K_i}=S_K,~~\mbox{weakly}.\]
Weak convergence means that for every continuous function $f$ on $\mathbb{S}^{n-1}$ we have
\[\lim_{i\to\infty}\int_{\mathbb{S}^{n-1}}fdS_{K_i}=\int_{\mathbb{S}^{n-1}}fdS_{K}.\]
\begin{lemma}\label{lem: cont curvature image}
If $\{K_i\}$ is a sequence of bodies in $\mathcal{K}^2_e$ converging to a body $K\in\mathcal{K}^2_e$, then
\[\lim_{i\to\infty}\Lambda K_{i}=\Lambda K.\]
\end{lemma}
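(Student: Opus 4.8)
The plan is to use the definition \eqref{e: def Lambda} together with the weak continuity of surface area measures and the uniqueness part of Minkowski's existence theorem. First I would record that since $K_i\to K$ in the Hausdorff metric with all bodies in $\mathcal{K}^2_e$, the origin is the Santal\'o point (hence the centroid) of each $K_i$ and of $K$, so the translations $K_i-s_i$ appearing in \eqref{e: def Lambda} are all equal to $K_i$ itself, and similarly for $K$. Moreover $K\in\mathcal{K}^2_e$ has non-empty interior, so there are constants $0<r\le R<\infty$ with $rB\subseteq K_i\subseteq RB$ for all large $i$ (and for $K$); consequently $r\le h_{K_i}\le R$ uniformly, and the densities $\tfrac{V(K_i)}{V(K_i^{s_i})}h_{K_i}^{-3}$ are uniformly bounded above and below by positive constants. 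By the Blaschke selection theorem, to prove $\Lambda K_i\to \Lambda K$ it suffices to show that every convergent subsequence of $\{\Lambda K_i\}$ has limit $\Lambda K$; by Remark \ref{re: curvature image symmetric} we may take all $\Lambda K_i\in\mathcal{K}^2_e$, so a subsequential limit $L$ lies in $\mathcal{K}^2_e$, and the uniform two-sided density bounds prevent $L$ from being degenerate (it has non-empty interior), so $L\in\mathcal{K}^2_e$ with positive inradius.

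Next I would pass to the limit in the defining equation. Along the chosen subsequence, $\Lambda K_i\to L$ in the Hausdorff metric gives $S_{\Lambda K_i}\to S_L$ weakly by the weak continuity recalled just before the lemma. On the other hand, $V(K_i)\to V(K)$ and $V(K_i^*)\to V(K^*)$ by continuity of volume and of polarity for bodies with the origin in the interior (uniformly bounded away from the boundary here), and $h_{K_i}^{-3}\to h_{K}^{-3}$ uniformly on $\mathbb{S}^1$ because $h_{K_i}\to h_K$ uniformly with $h_{K_i}\ge r>0$. Hence for every continuous $f$ on $\mathbb{S}^1$,
\[
\int_{\mathbb{S}^1} f\, dS_{\Lambda K_i}=\frac{V(K_i)}{V(K_i^*)}\int_{\mathbb{S}^1} f\, h_{K_i}^{-3}\, d\sigma \longrightarrow \frac{V(K)}{V(K^*)}\int_{\mathbb{S}^1} f\, h_{K}^{-3}\, d\sigma=\int_{\mathbb{S}^1} f\, dS_{\Lambda K}.
\]
Combining the two convergences, $S_L = \tfrac{V(K)}{V(K^*)} h_K^{-3}\, d\sigma = S_{\Lambda K}$ as measures on $\mathbb{S}^1$. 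By the uniqueness part of Minkowski's existence theorem (up to translation), $L$ is a translate of $\Lambda K$; since both $L$ and $\Lambda K$ are origin-symmetric, $L=\Lambda K$. As every subsequential limit is $\Lambda K$ and the sequence $\{\Lambda K_i\}$ is precompact, $\Lambda K_i\to\Lambda K$, proving the lemma.

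The only genuinely delicate point is the precompactness of $\{\Lambda K_i\}$, i.e. ensuring the limit body is non-degenerate: this is where the uniform lower bound $h_{K_i}\ge r$ and the convergence $V(K_i^*)\to V(K^*)<\infty$ are used, since together they give a uniform positive lower bound on the total measure $S_{\Lambda K_i}(\mathbb{S}^1)$ and a uniform upper bound on the diameter of $\Lambda K_i$ (via, e.g., the isoperimetric-type estimate $2\,r_-(\Lambda K_i)\le$ width, bounded below, and $S_{\Lambda K_i}(\mathbb{S}^1)$ bounded above forcing the perimeter, hence diameter, to be bounded). Everything else is a routine application of weak continuity and Minkowski uniqueness.
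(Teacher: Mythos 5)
Your proof is correct and follows the same overall strategy as the paper: establish boundedness of $\{\Lambda K_i\}$, extract a Hausdorff-convergent subsequence via Blaschke selection, pass to the limit in the defining identity for $S_{\Lambda K_i}$ using weak continuity of surface area measures, and then identify the limit with $\Lambda K$ by a uniqueness argument. The two places you differ from the paper are worth noting. First, to bound $\{\Lambda K_i\}$ the paper uses the identity $\Pi\Lambda K_i = \tfrac{3}{2}V(K_i)\Gamma K_i^{\ast}$ (a reformulation of (\ref{e: centroid and projection of curvature image})) together with uniform bounds on the centroid bodies $\Gamma K_i^{\ast}$, whereas you bound the total surface area measure $S_{\Lambda K_i}(\mathbb{S}^1)$ (the perimeter in $\mathbb{R}^2$) directly from the two-sided density bound and deduce a diameter bound. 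Both routes are valid; yours is more elementary and avoids invoking (\ref{e: centroid and projection of curvature image}). Second, to get $L=\Lambda K$ from $S_L=S_{\Lambda K}$ you invoke Minkowski's uniqueness theorem, while the paper rederives it on the spot by computing $V(L,Q)=V(\Lambda K,Q)$ for all $Q$ and using the equality case of Minkowski's mixed volume inequality; these are the same idea.

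One small wrinkle in your last paragraph: the stated inequality $2\,r_-(\Lambda K_i)\le\text{width}$ is an \emph{upper} bound on $r_-(\Lambda K_i)$ in terms of width, so the phrase ``bounded below'' does not give the inradius lower bound you seem to be after. But this does not damage the proof: for Blaschke selection you only need the \emph{diameter} upper bound, which you correctly obtain from the perimeter bound, and non-degeneracy of the subsequential limit $L$ then falls out of the identification step for free --- once you know $S_L = \frac{V(K)}{V(K^{\ast})}h_K^{-3}\,d\sigma$, a measure with a strictly positive continuous density, $L$ cannot be a segment or a point, and in fact $V(L)=V(\Lambda K)>0$. (If you prefer to establish the inradius bound directly, the right tool is a Blaschke rolling/curvature bound: the density $dS_{\Lambda K_i}/d\sigma$ bounded below means the radius of curvature of $\partial \Lambda K_i$ is bounded below, which controls $r_-(\Lambda K_i)$ from below.)
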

\begin{proof}
On the one hand, by the definition of $\Lambda K$, for every continuous function $f$ on $\mathbb{S}^{1}$ we have
\begin{align}\label{e: cont curvature image}
\lim_{i\to\infty}\int_{\mathbb{S}^{1}}fdS_{\Lambda K_i}=&\lim_{i\to\infty}\left(\frac{V(K_i)}{V(K^{\ast}_i)}\int_{\mathbb{S}^{1}}fh_{K_i}^{-3}d\sigma\right)\\
=&\frac{V(K)}{V(K^{\ast})}\int_{\mathbb{S}^{1}}fh_{K}^{-3}d\sigma=\int_{\mathbb{S}^{1}}fdS_{\Lambda K}.\nonumber
\end{align}
Notice that to go from the first line to the second line, we have used the bounded convergence theorem: Indeed, $\lim\limits_{i\to\infty}K_i=K$ implies that $\{h_{K_i}\}$ converges uniformly on $\mathbb{S}^1$ to $h_K$. Moreover, by the assumption $K\in\mathcal{K}^2_e$, we have $0<m<h_{K}<M<\infty$, for some constants. Therefore, $\{1/h_{K_i}\}$ is uniformly bounded from above.

On the other hand, $\{\Lambda K_i\}$ is uniformly bounded: Recall from identity (\ref{e: centroid and projection of curvature image}) that ${\Pi\Lambda K_i}=\frac{3}{2}V(K_i){\Gamma K_i^{\ast}}$ and it is clear that $\{\Gamma K_i^{\ast}\}$ is uniformly bounded. Therefore $\{\Pi\Lambda K_i=2(\Lambda K_i)^{\pi/2}\}$ is uniformly bounded. Thus, every a priori chosen subsequence of $\{\Lambda K_i\}$ by Blaschke's selection theorem has a subsequence, denoted by $\{i_k\}_{k}$, such that $\{\Lambda K_{i_k}\}$ converges to a body $L\in\mathcal{K}^2_e.$ Thus $\lim\limits_{k\to\infty}S_{\Lambda K_{i_k}}=S_{L}$, weakly.
Putting these two observations altogether, we infer
\begin{align}\label{e: even uniq}
\int_{\mathbb{S}^{1}}fdS_{L}=\lim_{k\to\infty}\int_{\mathbb{S}^{1}}fdS_{\Lambda K_{i_k}}=\int_{\mathbb{S}^{1}}fdS_{\Lambda K},
\end{align}
for every continuous function $f$ on $\mathbb{S}^{1}$. In particular, (\ref{e: even uniq}) implies that
\[V(L,Q)=V(\Lambda K,Q), \mbox{~for~every~} Q\in\mathcal{K}^2.\]
Therefore, $V^2(\Lambda K,L)=V^2(L)=V^2(\Lambda K)=V(L)V(\Lambda K)$. So by the equality case in Minkowski's mixed volume inequality $L=\Lambda K.$ In particular, the limit $L$ is independent of the subsequence we have chosen a priori. The proof is now complete.
\end{proof}
We now have all necessary ingredients to prove the weak convergence (equivalently, convergence in the Hausdorff metric) of a subsequence of the normalized solution.
\begin{proof}
By Proposition \ref{prop: 1 step to asym shape}, there exists a sequence of times, $\{t_k\}_{k}$, such that $t_k\to T$ and
\begin{align}\label{e: last}
\lim_{t_k\to T}\left(\frac{1}{V(K_{t_k})V(K_{t_k}^{\ast})}\left(\frac{V(\Lambda K_{t_k}^{\ast})}{V(K_{t_k}^{\ast})}-1\right)=:\chi(t_k)\right)=0.
\end{align}
Observe that $\chi(t_k)$, $V(\Lambda K_{t_k}^{\ast})/V(K_{t_k}^{\ast})$, and $V(K_{t_k})V(K_{t_k}^{\ast})$  are all invariant under $GL(2)$.
Define $\bar{K}_t:=\left(V(B)/V(K_t^{\ast})\right)^{1/2}K_t^{\ast}$ and for each $t$ let $\Psi_t\in SL(2)$ be a linear transformation that minimizes the length of $\partial \bar{K}_t.$
By John's ellipsoid theorem the sequence $\{\Psi_{t_k}\bar{K}_{t_k}\}$ is uniformly bounded. Thus, by Blaschke's selection theorem and by passing to a subsequence of times denoted again by $\{t_k\}_{k}$ we deduce that there exists a body $\bar{K}_{\infty}\in\mathcal{K}^2_e$ such that $\lim\limits_{t_k\to T}\Psi_{t_k}\bar{K}_{t_k}=\bar{K}_{\infty}.$ In light of Lemma \ref{lem: cont curvature image}, we have $\lim\limits_{t_k\to T}\Lambda \Psi_{t_k}\bar{K}_{t_k}=\Lambda\bar{K}_{\infty}.$ So from (\ref{e: last}) we get
\[\lim_{t_k\to T}\frac{V(\Lambda \Psi_{t_k}\bar{K}_{t_k})}{V(\Psi_{t_k}\bar{K}_{t_k})}=\frac{V(\Lambda \bar{K}_{\infty})}{V(\bar{K}_{\infty})}=1.\]
Recall that $V(\Lambda \bar{K}_{\infty})=V(\bar{K}_{\infty})$ implies that $\Lambda \bar{K}_{\infty}=\bar{K}_{\infty}$. Now Lemma 8.1 of Petty \cite{P1} yields that $\bar{K}_{\infty}$ must be an origin-centered ellipse. Since the length of $\Psi_{t_k}\bar{K}_{t_k}$ is minimized and $V(\Psi_{t_k}\bar{K}_{t_k})=\pi$, $\bar{K}_{\infty}$ must be the unit disk. Consequently
$\lim\limits_{t_k\to T}\Psi_{t_k}\bar{K}_{t_k}=B$ and
\begin{equation}\label{e: aa}
\lim\limits_{t_k\to T}(\Psi_{t_k}\bar{K}_{t_k})^{\ast}=B, \lim\limits_{t_k\to T}V((\Psi_{t_k}\bar{K}_{t_k})^{\ast})=\pi.
\end{equation}
Let $\Phi_{t_k}$ be the inverse of the transpose of $\Psi_{t_k}$, for each $t_k$. So we get
 \[(\Psi_{t_k}\bar{K}_{t_k})^{\ast}=\left(\frac{V(B)}{V(K_{t_k}^{\ast})}\right)^{-\frac{1}{2}}\Phi_{t_k}K_{t_k}=\left(\frac{V^2(B)}{V(K_{t_k})V(K_{t_k}^{\ast})}\right)^{-\frac{1}{2}}
 \left(\frac{V(B)}{V(K_{t_k})}\right)^{\frac{1}{2}}\Phi_{t_k}K_{t_k}.\]
 Combining (\ref{e: aa}) with  $V\left( \left(\frac{V(B)}{V(K_{t_k})}\right)^{\frac{1}{2}}\Phi_{t_k}K_{t_k}\right)=\pi$ implies
 \[\lim\limits_{t_k\to T}\left(\frac{V^2(B)}{V(K_{t_k})V(K_{t_k}^{\ast})}\right)^{-\frac{1}{2}}=1,\] and in turn
 \[\lim_{t_k\to T}\left(\frac{V(B)}{V(K_{t_k})}\right)^{\frac{1}{2}}\Phi_{t_k}K_{t_k}=B.\]
\end{proof}
\section{Stability of the Planar Busemann-Petty centroid inequality}\label{sec: stability}
We will state several lemmas from \cite{BA4,Ivaki,Ivaki2,Ivaki4} to prepare the proof of Theorem B.
The first lemma is rewriting identity (\ref{ie: asym behavior1}).
\begin{lemma}\label{lem: time dev of the ratio}
Along flow (\ref{e: asli}) we have
\begin{align}\label{ie: asym behavior}
\frac{d}{dt}\frac{V(\Gamma K_t)}{V(K_t)}=&\frac{32V(\Lambda K_t^{\ast})}{3V^2(K_t)V(K_t^{\ast})}\left(1-\frac{V^2(\Lambda K_t^{\ast},K_{t}^{\ast})}{V(\Lambda K_t^{\ast})V(K_t^{\ast})}\right).
\end{align}
\end{lemma}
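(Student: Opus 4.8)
The plan is to obtain (\ref{ie: asym behavior}) as a purely algebraic rewriting of the identity (\ref{ie: asym behavior1}) already established in the previous section; there is no new analytic content here, so I do not anticipate a genuine obstacle. Recall that (\ref{ie: asym behavior1}) reads
\[
\frac{d}{dt}\frac{V(\Gamma K_t)}{V(K_t)}=\frac{32}{3V^2(K_t)V(K_t^{\ast})}\left(V(\Lambda K_t^{\ast})-V(K_t^{\ast})\right).
\]
The single ingredient I would invoke is Lutwak's identity $V(\Lambda L,L)=V(L)$, valid for every $L\in\mathcal{K}^2$ (it already appeared in the proof of Corollary \ref{cor: monotonicity of petty-centroid}), applied with $L=K_t^{\ast}$. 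This gives $V(K_t^{\ast})=V(\Lambda K_t^{\ast},K_t^{\ast})$, and therefore also $V(K_t^{\ast})=V^2(\Lambda K_t^{\ast},K_t^{\ast})/V(K_t^{\ast})$, the quantities involved being positive so that dividing is legitimate.

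Substituting this last expression for the second occurrence of $V(K_t^{\ast})$ inside the parenthesis of (\ref{ie: asym behavior1}) and then factoring out $V(\Lambda K_t^{\ast})$ yields
\[
V(\Lambda K_t^{\ast})-V(K_t^{\ast})=V(\Lambda K_t^{\ast})-\frac{V^2(\Lambda K_t^{\ast},K_t^{\ast})}{V(K_t^{\ast})}=V(\Lambda K_t^{\ast})\left(1-\frac{V^2(\Lambda K_t^{\ast},K_t^{\ast})}{V(\Lambda K_t^{\ast})V(K_t^{\ast})}\right),
\]
which is precisely the bracketed factor appearing in (\ref{ie: asym behavior}). Multiplying through by $32/\bigl(3V^2(K_t)V(K_t^{\ast})\bigr)$ then completes the derivation.

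The point of recasting the time derivative in this particular shape is to isolate the dimensionless quantity $1-V^2(\Lambda K_t^{\ast},K_t^{\ast})/\bigl(V(\Lambda K_t^{\ast})V(K_t^{\ast})\bigr)$, which by the equality case of Minkowski's mixed volume inequality is non-negative and vanishes exactly when $\Lambda K_t^{\ast}$ is homothetic to $K_t^{\ast}$, i.e.\ (for origin-symmetric bodies, invoking \cite[Lemma 8.1]{P1}) for origin-centered ellipses. This normalized Minkowski deficit is the object to which the stronger planar Minkowski inequality of Groemer will later be applied in order to extract a quantitative stability estimate, so the only care required at this step is to verify that every factor displayed is manifestly $SL(2)$-invariant and correctly normalized — which it is, since $V(K_t)V(K_t^{\ast})$, $V(\Lambda K_t^{\ast})/V(K_t^{\ast})$, and the deficit itself are all unchanged under linear maps.
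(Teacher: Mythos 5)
Your derivation is correct and is exactly the approach the paper intends: the text explicitly introduces the lemma as "rewriting identity (\ref{ie: asym behavior1})," and your use of $V(\Lambda L,L)=V(L)$ with $L=K_t^{\ast}$, followed by factoring out $V(\Lambda K_t^{\ast})$, supplies the small algebraic step the paper leaves implicit. One minor slip in your closing commentary: by Minkowski's inequality $V^2(\Lambda K_t^{\ast},K_t^{\ast})\geq V(\Lambda K_t^{\ast})V(K_t^{\ast})$, so the bracketed deficit $1-V^2(\Lambda K_t^{\ast},K_t^{\ast})/\bigl(V(\Lambda K_t^{\ast})V(K_t^{\ast})\bigr)$ is non\emph{positive}, not non-negative — consistent with the monotonicity $\frac{d}{dt}\,V(\Gamma K_t)/V(K_t)\leq 0$ of Corollary~\ref{cor: monotonicity of petty-centroid}. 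The identity itself and its $SL(2)$-invariance are stated correctly.
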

\begin{lemma}\label{lem: app} Under the evolution equation (\ref{e: asli}) we have
\[h_{K_t}(u)\leq h_{K_{0}}(u)\leq h_{K_t}(u)\left(1+2t\left(\frac{\mathcal{G}}{h^3}\right)(u,t)\right).\]
In particular,
\[d_{\mathcal{BM}}(K_0,K_t)\leq \left(1+2t\max_{u\in\mathbb{S}^1}\left(\frac{\mathcal{G}}{h^3}\right)(u,t)\right).\]
\end{lemma}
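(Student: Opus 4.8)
The plan is to separate the two inequalities. The first, $h_{K_t}(u)\le h_{K_0}(u)$, is immediate: by Corollary~\ref{cor: Convexity is preserved} the bodies stay strictly convex, so $\mathcal S=h_{\theta\theta}+h>0$ and the flow speed $f:=h^{-2}\mathcal S^{-1}=\mathcal G/h^{2}$ is positive; hence $\partial_t h=-f<0$ and $t\mapsto h(u,t)$ is decreasing. Integrating $\partial_s h(u,s)=-f(u,s)$ over $[0,t]$ gives $h_{K_0}(u)-h_{K_t}(u)=\int_0^t f(u,s)\,ds$, so, since all quantities are positive, the second inequality $h_{K_0}(u)\le h_{K_t}(u)\big(1+2t(\mathcal G/h^{3})(u,t)\big)$ is equivalent to the pointwise (in $u$) integral Harnack estimate
\[\int_0^t f(u,s)\,ds\le 2t\,f(u,t).\]

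To prove this I would establish the differential Harnack inequality $\partial_s\big(\sqrt{s}\,f(u,s)\big)\ge 0$, equivalently $\partial_t\log f\ge-\tfrac1{2t}$, for each fixed $u$ (this strengthens Proposition~\ref{prop: ev speed}, which only controls $\min_u f$). Granting it, $\sqrt{s}\,f(u,s)\le\sqrt{t}\,f(u,t)$ for $0\le s\le t$, so $f(u,s)\le\sqrt{t/s}\,f(u,t)$ and $\int_0^t f(u,s)\,ds\le\sqrt{t}\,f(u,t)\int_0^t s^{-1/2}\,ds=2t\,f(u,t)$, which is exactly the claim. For the differential Harnack, differentiate the flow: from $\partial_t h^{-2}=2h^{-3}f$, $\partial_t\mathcal S=-(f_{\theta\theta}+f)$ (equation~(\ref{e: ev equation})) and $\partial_t\mathcal S^{-1}=\mathcal S^{-2}(f_{\theta\theta}+f)$ one gets the parabolic equation
\[\partial_t f=f\mathcal S^{-1}f_{\theta\theta}+f^{2}\big(\mathcal S^{-1}+2h^{-1}\big),\]
whose leading coefficient $f\mathcal S^{-1}$ is positive and whose zeroth-order terms are all positive. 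I would then apply the parabolic maximum principle to the Harnack quantity $P:=t\,\partial_t\log f+\tfrac12$: since $K_0\in\mathcal F_e^2$, by short-time existence $f$ is smooth and positive on $\mathbb S^1\times[0,T)$ up to $t=0$, so $\partial_t\log f$ is bounded near $t=0$ and $P\to\tfrac12>0$ as $t\to0^+$; computing the evolution of $\partial_t\log f$ from the displayed equation and using positivity of $\mathcal S^{-1},h,f$, one checks that $P$ satisfies a differential inequality preventing it from first reaching $0$. This is the Harnack scheme already carried out for these centro-affine flows in \cite{BA4,Ivaki2}, and I would follow it.

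The ``in particular'' statement is then formal. Set $\lambda:=1+2t\max_{u\in\mathbb S^1}(\mathcal G/h^{3})(u,t)$, a constant independent of $u$. The two inequalities give $h_{K_t}\le h_{K_0}\le\lambda h_{K_t}=h_{\lambda K_t}$ on $\mathbb S^1$, hence $K_t\subseteq K_0\subseteq\lambda K_t$, and by the definition of the Banach--Mazur distance (with $\Phi=\mathrm{id}$ and $x=y=0$) this yields $d_{\mathcal{BM}}(K_0,K_t)\le\lambda$.

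The main obstacle is the differential Harnack inequality. The evolution of $f$ is a genuinely quasilinear parabolic equation with non-constant leading coefficient and an $h^{-1}$ term in the zeroth-order part, so the Harnack quantity and its evolution equation must be organized so that the terms of the wrong sign are absorbed; the delicate points are pinning down the $t\to 0^{+}$ behaviour of $\partial_t\log f$ and verifying the uniform parabolicity (on compact subintervals of $[0,T)$) needed to invoke the maximum principle. The rest of the argument is routine.
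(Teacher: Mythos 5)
Your proposal is correct and follows essentially the same route as the paper. The paper itself delegates this lemma to a Harnack estimate obtained by Andrews' method \cite{BA1} and then makes the deduction explicit in Section~5 as Lemma~\ref{lem: Harnack est} ($\partial_t(\sqrt{t}\,\mathcal{G}/h^{2})\ge 0$) and Lemma~\ref{lem: app1} (monotonicity of $Q(t)=\tfrac12(h_{K_t}-h_{K_{t_0}})+(t-t_0)\mathcal{G}/h^{2}$). Your direct integration $\int_0^t f(u,s)\,ds\le\sqrt{t}\,f(u,t)\int_0^t s^{-1/2}\,ds=2t\,f(u,t)$ is an equivalent, slightly more transparent way of extracting the same conclusion from the same Harnack estimate: the paper's $Q$-function satisfies $\partial_t Q=t\bigl(\partial_t f+\tfrac{1}{2t}f\bigr)\ge 0$, which is precisely the Harnack inequality you integrate. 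Your computed evolution equation $\partial_t f=f\mathcal{S}^{-1}f_{\theta\theta}+f^{2}(\mathcal{S}^{-1}+2h^{-1})$ agrees with the one in Proposition~\ref{prop: ev speed}, and your handling of the left-hand inequality and the Banach--Mazur bound matches the paper. Like the paper, you defer the proof of the differential Harnack itself to Andrews' scheme, which is acceptable here since the paper also omits it, citing \cite{Ivaki4,Ivaki5}.
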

To prove Lemma \ref{lem: app}, one may first obtain a Harnack estimate using the method of \cite{BA1} from which the right-hand side follows. The left-hand side holds trivially as the flow is a shrinking one. The details are given by the author for the $p$-centro affine normal flows in \cite{Ivaki4,Ivaki5} (See also Lemmas \ref{lem: Harnack est} and \ref{lem: app1}.).
\begin{lemma}\label{lem: upper G}
For any smooth, strictly convex solution $\{K_t\}$ of evolution equation (\ref{e: asli}) with $0<R_{-}\leq h_{K_t}\leq R_{+}< \infty$, for $t\in[0,\delta]$, and some positive numbers $R_{\pm}$, we have
\[\mathcal{G}_{K_t}\leq C_0+C_1t^{-1/2},\]
where $C_0,C_1$ are constants depending on $R_{-}$ and $R_{+}.$
\end{lemma}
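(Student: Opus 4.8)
The plan is to follow Tso's trick in the same spirit as Lemma~\ref{lem: upper bound for speed}, but this time squeezing out the stronger, time-decaying bound $C_0+C_1t^{-1/2}$ rather than a bound that is uniform only on a time interval bounded away from $0$. First I would fix $\rho:=R_-/2$ and introduce the auxiliary quantity
\[
\Omega(\theta,t):=\frac{t\,(h^{-2}\mathcal{S}^{-1})(\theta,t)}{h(\theta,t)-\rho},
\]
which vanishes at $t=0$ and is well defined since $h\geq R_->\rho$ on $[0,\delta]$. Using the evolution equations (\ref{e: asli}) and (\ref{e: ev equation}) I would compute $\partial_t\Omega$ and apply the parabolic maximum principle on the space-time region $\mathbb{S}^1\times[0,\delta]$. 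At an interior maximum point of $\Omega$ we have $\Omega_\theta=0$ and $\Omega_{\theta\theta}\leq 0$; after using these to eliminate the $\mathcal{S}^{-1}$ second-derivative terms (exactly as in Lemma~\ref{lem: upper bound for speed}, picking up the extra $t$-factor), one is left with an inequality of the form $0\le \partial_t\Omega\le \frac{1}{t}\Omega - (\text{positive multiple of }\rho)\,\Omega^2/t + (\text{bounded})\cdot\Omega/t$ at that point, which forces $\Omega\le C$ there for a constant $C=C(R_-,R_+)$. Tracking the bounds, one gets $t\,\mathcal{G}_{K_t}/\,(h^2(h-\rho))\le C$ at the maximum, hence on all of $\mathbb{S}^1\times[0,\delta]$, so $\mathcal{G}_{K_t}\le C\,R_+^2\,(R_+-\rho)/t$; combining this with the crude bound $\mathcal{G}_{K_t}\le 4/\rho$ from Lemma~\ref{lem: upper bound for speed} (valid on the same interval since $h\ge R_-$) and taking the minimum of the two estimates yields a bound of the claimed shape $C_0+C_1t^{-1/2}$. (In fact one directly obtains $C_1/t$; the weaker-looking $t^{-1/2}$ is harmless and is all that is needed later, so I would just record that.)

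The one point that needs a little care, and which I expect to be the main technical obstacle, is the bookkeeping of the lower-order terms produced by $\partial_t(h^{-2})$ and by the $2h^{-3}\mathcal{S}^{-1}(h^{-2}\mathcal{S}^{-1})$ term from (\ref{e: laplacian}) when they are divided by $h-\rho$ and multiplied by $t$: one must check that the genuinely bad term is the negative quadratic $-c\rho\,\Omega^2/t$ and that every other term is either $\le \Omega/t$ up to a constant or is absorbed, so that the inequality indeed closes to give a bound on $\Omega$ independent of $\delta$. Since all the relevant quantities $h$, $h^{-1}$ are pinched between $R_-$ and $R_+$ on $[0,\delta]$, this is routine, and the resulting constants $C_0,C_1$ depend only on $R_\pm$. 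As in the earlier lemma I would only sketch this computation, referring to \cite{Tso} and to \cite{Ivaki4,Ivaki5} for the analogous estimates for the $p$-flows, and then state the bound.
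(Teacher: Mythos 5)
Your strategy (Tso's trick with the auxiliary function built from the speed divided by $h-\rho$) is the right one, and the variant with the extra factor of $t$ in the numerator is a perfectly standard way to arrange $\tilde\Omega(\cdot,0)=0$. The paper does not insert a $t$-weight: it keeps $\Omega=\tfrac{h^{-2}\mathcal{S}^{-1}}{h-\rho}$, derives at a spatial maximum the Riccati-type inequality
\[
\partial_t\Omega\ \le\ -\tfrac{R_-^4}{4}\,\Omega^3+4\,\Omega^2,
\]
and integrates this ODE directly to get $\Omega\le C t^{-1/2}+C'$. Both routes work, but yours as written has a concrete error in the bookkeeping.

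The error is in the powers. Because the speed $h^{-2}\mathcal{S}^{-1}$ is \emph{cubic} in $h$ and its derivatives (equivalently, $\mathcal{S}^{-1}=\mathcal{G}$ is itself comparable to $\Omega$ once $R_-\le h\le R_+$), the favourable term at the maximum is \emph{cubic} in $\Omega$, not quadratic: the negative contribution $-\tfrac{R_-}{2}\Omega^2\mathcal{S}^{-1}\sim -c\,\Omega^3$. If you set $\tilde\Omega=t\,\Omega$, then at a space--time maximum (away from $t=0$) one has
\[
0\ \le\ \partial_t\tilde\Omega\ =\ \Omega+t\,\partial_t\Omega\ \le\ \frac{\tilde\Omega}{t}+\frac{4\,\tilde\Omega^2}{t}-\frac{c\,\tilde\Omega^3}{t^2},
\]
i.e.\ the bad term is $-c\,\tilde\Omega^3/t^2$, not $-c\rho\,\tilde\Omega^2/t$ as you claim. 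Rearranging gives $c\,\tilde\Omega^2\le t\bigl(1+4\tilde\Omega\bigr)$, hence $\tilde\Omega\lesssim\sqrt{t}+t$, not $\tilde\Omega\lesssim1$. Consequently $\Omega=\tilde\Omega/t\lesssim t^{-1/2}+1$, and your parenthetical assertion that one ``directly obtains $C_1/t$'' is false; only the $t^{-1/2}$ rate is available, and it is sharp for this cubic flow (compare the toy comparison $y'=-ay^3$, which gives $y\le(2at)^{-1/2}$, versus the quadratic case $y'=-ay^2$, which would give $y\le(at)^{-1}$). Fortunately the lemma only asks for $C_0+C_1t^{-1/2}$, so after you correct the power your argument does prove the statement; but the intermediate inequality you wrote down, and the stronger $1/t$ conclusion you drew from it, should be discarded.
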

\begin{proof} We apply Tso's trick \cite{Tso}. Consider the function
\[\Omega=\frac{h^{-2}\mathcal{S}^{-1}}{h-R_{-}/2}.\]
Using the maximum principle, we will show that $\Omega$ is bounded from above by a function of $R_{-}, R_{+},$ and time.
At the point where the maximum of $\Omega$ occurs, we have
\[0=\Omega_{\theta}= \left(\frac{h^{-2}\mathcal{S}^{-1}}{h-R_{-}/2}\right)_{\theta}\ \ \ {\hbox{and}}\ \ \  \Omega_{\theta\theta}\leq 0.\]
Hence, we obtain
\[\frac{(h^{-2}\mathcal{S}^{-1})_{\theta}}{h-R_-/2}=\frac{(h^{-2}\mathcal{S}^{-1}) h_{\theta}}{(h-R_-/2)^2}\]
 and consequently
\begin{equation}\label{e: tso}
\left(h^{-2}\mathcal{S}^{-1}\right)_{\theta\theta}+\left(h^{-2}\mathcal{S}^{-1}\right)\leq
\frac{h^{-2}-(R_{-}/2)h^{-2}\mathcal{S}^{-1}}{h-R_{-}/2}.
\end{equation}
We calculate
\begin{align*}
\partial_t\Omega&=\frac{h^{-2}\mathcal{S}^{-2}}{h-R_{-}/2}
\left[\left(h^{-2}\mathcal{S}^{-1}\right)_{\theta\theta}+\left(h^{-2}\mathcal{S}^{-1}\right)\right]\\
&+\frac{\mathcal{S}^{-1}}{h-R_{-}/2}\partial_t h^{-2}+\frac{h^{-4}\mathcal{S}^{-2}}{(h-R_{-}/2)^2}.
\end{align*}
 Notice that
\begin{equation}\label{e: one to last}
\frac{\mathcal{S}^{-1}}{h-R_{-}/2}\partial_t h^{-2}=2\Omega^2-2\frac{R_{-}}{2}\frac{h^{-5}\mathcal{S}^{-2}}{(h-R_-/2)^2}\leq 2\Omega^2.
\end{equation}
Thus, using inequalities (\ref{e: tso}) and (\ref{e: one to last}), at the point where the maximum of $\Omega$ is reached, we have
\begin{equation}\label{e: last step tso}
\partial_t\Omega\leq\Omega^2\left(4-\frac{ R_{-}}{2}\mathcal{S}^{-1}\right).
\end{equation}
We can control $\mathcal{G}$ from below by a positive power of $\Omega:$
\begin{equation*}
\mathcal{S}^{-1}= \left(\frac{h-R_{-}/2}{h^{-2}\mathcal{S}^{-1}}\right)^{-1}\left(\frac{h^{-2}}{h-R_{-}/2}\right)^{-1}
\geq \Omega \left(\frac{R_{-}^{-2}}{R_{-}-R_{-}/2}\right)^{-1}.
\end{equation*}
Therefore, we can rewrite the inequality (\ref{e: last step tso}) as follows
\begin{align*}
\partial_t\Omega&\leq -\Omega^2\left(\frac{R_-^4}{4}\Omega-4\right).
\end{align*}
Hence
\[\Omega\leq C(R_{-},R_+)t^{-\frac{1}{2}}+C'(R_{-},R_+)\]
for some positive constants $C,C'$ depending on $R_-,R_+$. The corresponding claim for $\mathcal{G}$ follows.
\end{proof}
\begin{corollary}\label{cor: displacement bound}
For any solution $\{K_t\}\subset\mathcal{F}_e^2$ to (\ref{e: asli}) with $0<R_{-}\leq h_{K_t}\leq R_{+}< \infty$, for $t\in[0,\delta]$, and for some positive numbers $R_{\pm}$, we have
\[h_{K_t}(u)\leq h_{K_{0}}(u)\leq h_{K_t}(u)\left(1+\frac{2C_0}{R_-^3}t+\frac{2C_1}{R_-^3}t^{1/2}\right),\]
where $C_0, C_1$ are constants depending on $R_{-}$ and $R_{+}.$ In particular, we have
\[d_{\mathcal{BM}}(K_0,K_t)\leq \left(1+\frac{2C_0}{R_-^3}t+\frac{2C_1}{R_-^3}t^{1/2}\right).\]
\end{corollary}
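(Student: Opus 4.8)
The plan is to obtain Corollary~\ref{cor: displacement bound} as a direct consequence of the Harnack-type displacement estimate in Lemma~\ref{lem: app} and the curvature bound in Lemma~\ref{lem: upper G}; no new ingredient is required, so I do not expect a genuine obstacle — the only point to watch is the dependence of the constants on the data.

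First I would handle the left-hand inequality, which in fact requires no bounds on $h$: along~(\ref{e: asli}) the support function satisfies $\partial_t h=-1/(h^2\mathcal{S})$, and since the origin stays in the interior and strict convexity is preserved (Corollary~\ref{cor: Convexity is preserved}) we have $h>0$ and $\mathcal{S}>0$, so $t\mapsto h_{K_t}(u)$ is strictly decreasing for each fixed $u$; in particular $h_{K_t}(u)\le h_{K_0}(u)$ on $[0,\delta]$. (This is also part of Lemma~\ref{lem: app}.)

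For the right-hand inequality I would invoke Lemma~\ref{lem: app}, which gives $h_{K_0}(u)\le h_{K_t}(u)\bigl(1+2t(\mathcal{G}/h^3)(u,t)\bigr)$, and then estimate the factor $2t(\mathcal{G}/h^3)(u,t)$ from above. By hypothesis $h_{K_t}\ge R_-$ on $[0,\delta]$, so Lemma~\ref{lem: upper G} applies and yields $\mathcal{G}_{K_t}\le C_0+C_1 t^{-1/2}$ with $C_0,C_1$ depending only on $R_\pm$. Combining these,
\[2t\,\frac{\mathcal{G}_{K_t}(u)}{h_{K_t}^3(u)}\ \le\ \frac{2t\,(C_0+C_1 t^{-1/2})}{R_-^3}\ =\ \frac{2C_0}{R_-^3}\,t+\frac{2C_1}{R_-^3}\,t^{1/2},\]
which is exactly the quantity appearing in the statement; note that the $t^{-1/2}$ singularity of the curvature bound is absorbed harmlessly by the factor $2t$.

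Finally, for the Banach--Mazur estimate, set $\lambda:=1+\tfrac{2C_0}{R_-^3}t+\tfrac{2C_1}{R_-^3}t^{1/2}$. The two pointwise inequalities $h_{K_t}\le h_{K_0}\le \lambda h_{K_t}$ are equivalent to the inclusions $K_t\subseteq K_0\subseteq \lambda K_t$, and choosing $\Phi=\mathrm{id}$ and $x=y=0$ in the definition of $d_{\mathcal{BM}}$ gives $d_{\mathcal{BM}}(K_0,K_t)\le\lambda$. The one thing to make sure of — and it is already built into Lemma~\ref{lem: upper G} — is that $C_0,C_1$ depend only on the two-sided bound $R_\pm$ for $h$, since this is precisely what will let the corollary be applied, with $R_\pm$ controlled uniformly in $t$, during the proof of Theorem~B.
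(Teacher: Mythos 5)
Your proposal is correct and follows exactly the paper's approach: the paper's proof is the one-line remark that the claim ``immediately follows from Lemmas \ref{lem: app} and \ref{lem: upper G},'' and you have simply spelled out that combination, substituting $\mathcal{G}\le C_0+C_1t^{-1/2}$ and $h\ge R_-$ into the Harnack-type bound from Lemma \ref{lem: app} and reading off the Banach--Mazur estimate from the resulting sandwich $h_{K_t}\le h_{K_0}\le\lambda h_{K_t}$.
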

\begin{proof}
The claim immediately follows from Lemmas \ref{lem: app} and \ref{lem: upper G}.
\end{proof}
\begin{lemma}\label{lem: lower and upper bound for affine associated}
For any solution $\{K_t\}\subset\mathcal{F}_e^2$ to (\ref{e: asli}) with $0<R_{-}\leq h_{K_t}\leq R_{+}< \infty$, for $t\in[0,\delta]$, and for some positive numbers $R_{\pm}$, we have
\[C_2\leq h_{\Lambda K^{\ast}_t}~\& ~h_{K^{\ast}_t}\leq C_3,\]
where $C_2$ and $C_3$ are constants depending on $R_{-}$ and $R_{+}.$
\end{lemma}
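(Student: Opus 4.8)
The plan is to reduce everything to the support and radial functions of $K_t$ itself, where the hypothesis $R_-\le h_{K_t}\le R_+$ on $[0,\delta]$ gives immediate two‑sided bounds, and then to transport these bounds to $\Lambda K_t^\ast$ through the identity (\ref{e: centroid and projection of curvature image}).

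First I would collect the elementary polar estimates. Since $K_t\in\mathcal F_e^2$ is origin‑symmetric, its inradius and circumradius are realized by concentric balls at the origin and equal $\min_{\mathbb S^1}h_{K_t}$ and $\max_{\mathbb S^1}h_{K_t}$; hence $R_-B\subseteq K_t\subseteq R_+B$, so $R_-\le\rho_{K_t}\le R_+$ and, using $h_{K_t^\ast}=1/\rho_{K_t}$, also $1/R_+\le h_{K_t^\ast}\le 1/R_-$. Integrating $\rho_{K_t}^2$ and $h_{K_t}^{-2}$ over $\mathbb S^1$ yields $\pi R_-^2\le V(K_t)\le\pi R_+^2$ and $\pi/R_+^2\le V(K_t^\ast)\le\pi/R_-^2$. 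In particular the asserted bounds on $h_{K_t^\ast}$ are already in hand.

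Next I would treat $\Lambda K_t^\ast$. Applying (\ref{e: centroid and projection of curvature image}) to $K_t$, whose centroid is the origin, gives $\Gamma K_t=\tfrac{2}{3V(K_t^\ast)}\Pi\Lambda K_t^\ast$; combining this with Remark \ref{rem: projection dimension two}, which says $\Pi\Lambda K_t^\ast=2(\Lambda K_t^\ast)^{\pi/2}$, it becomes $h_{\Lambda K_t^\ast}(\theta)=\tfrac{3V(K_t^\ast)}{4}\,h_{\Gamma K_t}(\theta-\pi/2)$. So it remains to bound $h_{\Gamma K_t}$ from above and below by positive constants depending on $R_\pm$. From $K_t\subseteq R_+B$ and $|\langle u,x\rangle|\le|x|$ one gets $h_{\Gamma K_t}(u)=\tfrac1{V(K_t)}\int_{K_t}|\langle u,x\rangle|\,dx\le R_+$; from $R_-B\subseteq K_t$, positivity of the integrand, the rotation invariance of $\int_B|\langle u,y\rangle|\,dy=\tfrac43$, and $V(K_t)\le\pi R_+^2$, one gets $h_{\Gamma K_t}(u)\ge\tfrac{4R_-^3}{3\pi R_+^2}$. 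Feeding these together with the bounds on $V(K_t^\ast)$ into the displayed formula produces $\tfrac{R_-^3}{R_+^4}\le h_{\Lambda K_t^\ast}\le\tfrac{3\pi R_+}{4R_-^2}$. Taking $C_2:=\min\{1/R_+,\,R_-^3/R_+^4\}$ and $C_3:=\max\{1/R_-,\,3\pi R_+/(4R_-^2)\}$ then proves the lemma, with all constants depending only on $R_-$ and $R_+$.

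I do not expect a real obstacle; the only points needing care are getting the correct normalization (and the harmless $90^\circ$ rotation) in the identity for $\Lambda K_t^\ast$ in terms of $\Gamma K_t$, and keeping straight which of $V(K_t)$ and $V(K_t^\ast)$ is to be estimated from above and which from below in each step. As an alternative that bypasses (\ref{e: centroid and projection of curvature image}): by definition $dS_{\Lambda K_t^\ast}/d\sigma=\tfrac{V(K_t^\ast)}{V(K_t)}h_{K_t^\ast}^{-3}$ lies between two positive constants depending on $R_\pm$; the upper one bounds the perimeter of $\Lambda K_t^\ast$, hence $h_{\Lambda K_t^\ast}\le r_+(\Lambda K_t^\ast)\le\tfrac12$ (perimeter of $\Lambda K_t^\ast$), while the lower one $m>0$ forces the radius of curvature of $\Lambda K_t^\ast$ to be at least $m$, so $\Lambda K_t^\ast=L'+mB$ for some $L'\in\mathcal K^2$ and therefore $h_{\Lambda K_t^\ast}\ge m$.
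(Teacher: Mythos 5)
Your proof is correct and takes essentially the same route as the paper: the bounds on $h_{K_t^\ast}$ are immediate from $R_-B\subseteq K_t\subseteq R_+B$, and for $\Lambda K_t^\ast$ you use the identity $(\Lambda K_t^\ast)^{\pi/2}=\tfrac12\Pi\Lambda K_t^\ast=\tfrac34 V(K_t^\ast)\Gamma K_t$ coming from (\ref{e: centroid and projection of curvature image}) together with Remark \ref{rem: projection dimension two}, reducing matters to two‑sided bounds on $h_{\Gamma K_t}$. The paper merely says these bounds ``follow from the definition of $\Gamma(\cdot)$''; you carry out that step with explicit constants, which is consistent with (and fills in) the paper's sketch. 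The alternative argument you offer via the two‑sided bound on $dS_{\Lambda K_t^\ast}/d\sigma$ is also valid and genuinely different in spirit, but it is not the route the paper uses.
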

\begin{proof}
The claim for $K^{\ast}_t$ is trivial. To prove the claim for $\Lambda K^{\ast}_t$, first notice that $(\Lambda K^{\ast}_t)^{\pi/2}=\frac{1}{2}\Pi\Lambda K^{\ast}_t=
\frac{3}{4}V(K^{\ast}_t)\Gamma K_t.$ Moreover, recall that $V(K^{\ast})=\frac{1}{2}\int_{\mathbb{S}^1}h_K^{-2}d\theta.$ Thus, uniform lower and upper bounds on the support functions of $\Lambda K^{\ast}_t$ follow from the definition of $\Gamma( \cdot).$
\end{proof}
\begin{lemma}[\cite{Ivaki}]\label{lem: ellipsoid app} Suppose that $K\in\mathcal{F}_{e}^2$. If $m\le\frac{h}{\mathcal{G}^{1/3}}\le M$, for some positive numbers $m$ and $M$, then there exist two origin-symmetric ellipses $E_{in}$ and $E_{out}$ such that $E_{in}\subseteq K\subseteq E_{out}$ and
\[\left(\frac{V(E_{in})}{\pi}\right)^{2/3}=m,~~ \left(\frac{V(E_{out})}{\pi}\right)^{2/3}=M.\]
Moreover, we have
\[d_{\mathcal{BM}}(K,B)\leq \left(\frac{M}{m}\right)^{\frac{3}{2}}.\]
\end{lemma}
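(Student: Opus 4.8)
The plan is to sandwich $K$ between two origin-symmetric ellipses and then read off the distance estimate. The starting observation is the elementary identity that for an origin-symmetric ellipse $E=\Phi B$, $\Phi\in GL(2)$, one has $h_E\mathcal{G}_E^{-1/3}=h_E(h_{E,\theta\theta}+h_E)^{1/3}\equiv (V(E)/\pi)^{2/3}$: parametrizing $\partial E$ by $\phi\mapsto\Phi(\cos\phi,\sin\phi)=:x(\phi)$ gives $h_E=|\det\Phi|/|x'|$ and $\mathcal{S}_E=|x'|^3/|\det\Phi|$, so $h_E\mathcal{S}_E^{1/3}\equiv|\det\Phi|^{2/3}=(V(E)/\pi)^{2/3}$. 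Hence a $\pi$-periodic positive function is the support function of an origin-symmetric ellipse of ``parameter'' $c:=(V/\pi)^{2/3}$ exactly when it solves $g_{\theta\theta}+g=c^3g^{-3}$, and the hypothesis $m\le h\mathcal{G}^{-1/3}\le M$ says precisely that $h=h_K$ is a super-/sub-solution: $m^3h^{-3}\le h_{\theta\theta}+h=\mathcal{S}\le M^3h^{-3}$. The goal is to produce origin-symmetric ellipses $E_{in}\subseteq K\subseteq E_{out}$ of parameters $m$ and $M$; the estimate then follows by applying $\Phi\in GL(2)$ with $\Phi E_{in}=B$, so that $B\subseteq\Phi K\subseteq\Phi E_{out}=:E'$, where $E'$ is an origin-symmetric ellipse containing $B$ with $V(E')=\pi V(E_{out})/V(E_{in})=\pi(M/m)^{3/2}$, hence with semi-axes $a\ge b\ge1$ and $a=(M/m)^{3/2}b^{-1}\le(M/m)^{3/2}$; thus $B\subseteq\Phi K\subseteq(M/m)^{3/2}B$, and by $GL(2)$-invariance $d_{\mathcal{BM}}(K,B)=d_{\mathcal{BM}}(\Phi K,B)\le(M/m)^{3/2}$.

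To build $E_{out}$ (the construction of $E_{in}$ being symmetric), introduce the energy $\mathcal{E}(\theta):=h_\theta^2+h^2+M^3h^{-2}$. A direct computation gives $\mathcal{E}_\theta=2h_\theta(h_{\theta\theta}+h-M^3h^{-3})=2h_\theta(\mathcal{S}-M^3h^{-3})$, which, since $\mathcal{S}-M^3h^{-3}\le0$, has the sign of $-h_\theta$; thus $\mathcal{E}$ decreases along the arcs where $h$ increases, so $\max_{\mathbb{S}^1}\mathcal{E}$ is attained at a minimum point $\tau$ of $h$, with value $\beta^2+M^3\beta^{-2}$, $\beta:=\min h=h(\tau)$. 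Here one uses that every local minimum value $\beta_i$ of $h$ satisfies $\beta_i\le\mathcal{S}(\theta_i)\le M^3\beta_i^{-3}$, hence $\beta_i\le M^{3/4}$, and $t\mapsto t^2+M^3t^{-2}$ is decreasing on $(0,M^{3/4}]$, so the smallest value $\beta$ gives the largest energy. Let $E_{out}$ be the origin-symmetric ellipse of parameter $M$ with minor semi-axis $\beta$ in the direction $\tau$; its major semi-axis is $M^{3/2}\beta^{-1}\ge M^{3/4}\ge\beta$, its constant energy is $\beta^2+M^3\beta^{-2}=\max_{\mathbb{S}^1}\mathcal{E}$, and $g(\tau)=\beta=h(\tau)$ where $g:=h_{E_{out}}$. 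The energy inequality $\mathcal{E}(\theta)\le\mathcal{E}(\tau)=\mathcal{E}_{E_{out}}$, after dropping $h_\theta^2\ge0$ and noting that $\{t+M^3t^{-1}\le a^2+b^2\}=[b^2,a^2]$ for the semi-axes $b=\beta$, $a=M^{3/2}\beta^{-1}$, yields $h(\theta)\in[\min g,\max g]$ for every $\theta$.

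Now I claim $h\le g$, i.e. $K\subseteq E_{out}$. If not, pick a connected arc $(\theta_1,\theta_2)$ of $\mathbb{S}^1$ (viewed mod $\pi$) on which $h>g$, with $h(\theta_i)=g(\theta_i)$; since $h(\tau)=g(\tau)$ we have $\tau\notin(\theta_1,\theta_2)$, and since $h\le\max g$ everywhere while $h>g$ on the arc, the arc contains no maximum point of $g$ either. As the support function of an origin-symmetric ellipse has, mod $\pi$, exactly one local maximum (the major axis) and one local minimum (the minor axis, $\tau$), $g$ is strictly monotone on $(\theta_1,\theta_2)$, so $g_\theta(\theta_1)$ and $g_\theta(\theta_2)$ have the same sign. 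From $h(\theta_i)=g(\theta_i)$ together with $\mathcal{E}(\theta_i)\le\mathcal{E}_{E_{out}}$ one gets $h_\theta(\theta_i)^2\le g_\theta(\theta_i)^2$, while crossing from below at $\theta_1$ and from above at $\theta_2$ gives $h_\theta(\theta_1)\ge g_\theta(\theta_1)$ and $h_\theta(\theta_2)\le g_\theta(\theta_2)$; if $g_\theta\ge0$ on the arc these force $h_\theta(\theta_1)=g_\theta(\theta_1)$, and if $g_\theta\le0$ they force $h_\theta(\theta_2)=g_\theta(\theta_2)$. So at some endpoint $\theta_*$ the function $w:=g-h$ satisfies $w(\theta_*)=w_\theta(\theta_*)=0$. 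On the arc $w<0$, and there $w_{\theta\theta}=g_{\theta\theta}-h_{\theta\theta}\ge(M^3g^{-3}-g)-(M^3h^{-3}-h)=M^3(h^3-g^3)(g^3h^3)^{-1}-w>-w\ge0$, so $w$ is convex on $(\theta_1,\theta_2)$; a convex function vanishing together with its derivative at an endpoint is $\ge0$ on the interval, contradicting $w<0$. Hence $K\subseteq E_{out}$. Running the same argument with $m$ in place of $M$, with the energy $h_\theta^2+h^2+m^3h^{-2}$ (which now increases along increasing arcs of $h$, hence is maximal at a maximum point $\alpha$ of $h$, with every local maximum value $\ge m^{3/4}$), and taking $E_{in}$ to be the parameter-$m$ ellipse with major semi-axis $\max h$ in the direction $\alpha$, yields $E_{in}\subseteq K$.

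The delicate point is precisely the passage from the energy bound — which by itself only gives $\max h_K\le\max h_{E_{out}}$ and $\min h_K=\min h_{E_{out}}$ — to the \emph{pointwise} containment $h_K(\theta)\le h_{E_{out}}(\theta)$; this is what the crossing-arc analysis achieves, its crux being that at a crossing point the matched Cauchy data force $g-h$ to be convex (because $h$ is a subsolution of the parameter-$M$ ellipse equation), which prevents $h$ from overtaking $g$. I expect the remaining verifications — the elementary ellipse identity, the sign of $\mathcal{E}_\theta$ and the location of its extrema, and the final linear-algebra reduction for the Banach--Mazur distance — to be routine.
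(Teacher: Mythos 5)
Your argument is correct, and it goes further than what the paper actually writes: for this lemma the paper proves only the Banach--Mazur estimate and cites \cite{Ivaki} for the existence of the sandwiching ellipses, whereas you supply a self-contained derivation of both parts. Your reduction of the distance bound is the same as the paper's except that you normalize by $\Phi E_{in}=B$ while the paper normalizes so that $\Phi E_{out}$ is a disk; either choice yields $d_{\mathcal{BM}}(K,B)\le V(E_{out})/V(E_{in})=(M/m)^{3/2}$ by the same two-line semi-axis estimate. The real content you add is the energy-comparison proof of $E_{in}\subseteq K\subseteq E_{out}$: the monotonicity of $\mathcal{E}=h_\theta^2+h^2+M^3h^{-2}$ along arcs where $h$ increases, pinning the maximum of $\mathcal{E}$ at a global minimum of $h$ (using $\beta_i\le M^{3/4}$), and then the crossing-arc / matched-Cauchy-data argument showing $w=g-h$ is strictly convex where $h>g$ so no overshoot can occur. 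This is in the spirit of Andrews' comparison arguments in \cite{BA4} and of what \cite{Ivaki} does, and all the delicate checks (signs of $\mathcal{E}_\theta$, identification of the level set $\{t+M^3/t\le a^2+b^2\}=[b^2,a^2]$, the degenerate endpoint cases where $g_\theta$ vanishes at $\theta_1$ or $\theta_2$) hold as you claim. In short: correct, and for the first assertion it fills in a proof the paper leaves to a reference.
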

\begin{proof}
A proof of the first part of the claim is given in \cite{Ivaki}. To prove the second part of the claim, we may first apply a special linear transformation $\Phi\in SL(2)$ such that $\Phi E_{out}$ is a disk. Then it is easy to see that $\Phi E_{out}\subseteq \frac{V(E_{out})}{V(E_{in})}\Phi E_{in}.$ Therefore
\[\Phi E_{in} \subseteq  \Phi K\subseteq \frac{V(E_{out})}{V(E_{in})} \Phi E_{out},\]
and
\[d_{\mathcal{BM}}(K,B)\leq \frac{V(E_{out})}{V(E_{in})}.\]
\end{proof}
A simple consequence of Lemma \ref{lem: ellipsoid app} is contained in the following corollary.
\begin{corollary}[\cite{BA4,Ivaki2}]\label{cor: minmax of affine}
Let $K\in\mathcal{F}_{e}^2$ be of area $\pi$. Then
$$\min_{\mathbb{S}^1} \frac{h}{\mathcal{G}^{1/3}}\leq 1\leq\max_{\mathbb{S}^1}\frac{h}{\mathcal{G}^{1/3}}.$$
\end{corollary}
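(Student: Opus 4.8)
The plan is to read the statement off as an immediate corollary of Lemma~\ref{lem: ellipsoid app}. First I would set $m := \min_{\mathbb{S}^1} h/\mathcal{G}^{1/3}$ and $M := \max_{\mathbb{S}^1} h/\mathcal{G}^{1/3}$; since $K \in \mathcal{F}_e^2$ these are finite positive numbers and trivially $m \le h/\mathcal{G}^{1/3} \le M$ on all of $\mathbb{S}^1$. Applying Lemma~\ref{lem: ellipsoid app} with this choice of $m$ and $M$ yields origin-symmetric ellipses $E_{in}$ and $E_{out}$ with $E_{in} \subseteq K \subseteq E_{out}$ and $(V(E_{in})/\pi)^{2/3} = m$, $(V(E_{out})/\pi)^{2/3} = M$.

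Next I would invoke monotonicity of Lebesgue measure under inclusion together with the normalization $V(K) = \pi$. From $E_{in} \subseteq K$ we get $V(E_{in}) \le \pi$, hence $m = (V(E_{in})/\pi)^{2/3} \le 1$; from $K \subseteq E_{out}$ we get $V(E_{out}) \ge \pi$, hence $M = (V(E_{out})/\pi)^{2/3} \ge 1$. These two inequalities are precisely the asserted pinching $\min_{\mathbb{S}^1} h/\mathcal{G}^{1/3} \le 1 \le \max_{\mathbb{S}^1} h/\mathcal{G}^{1/3}$.

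I do not expect any genuine obstacle here: essentially all the work is hidden in Lemma~\ref{lem: ellipsoid app} (taken from \cite{Ivaki}), which is what produces the sandwiching ellipses whose areas are pinned to the extreme values of the equicentro-affine quantity $h/\mathcal{G}^{1/3}$. The only point worth flagging is the geometric meaning of the constant $1$: it is the common value of $h/\mathcal{G}^{1/3}$ on the unit disk, and more generally on any origin-centered ellipse of area $\pi$, so the corollary says that for a body normalized to area $\pi$ the quantity $h/\mathcal{G}^{1/3}$ must straddle its value on the extremal body. This is exactly the form in which the estimate will be applied later to locate a favourable time along flow~(\ref{e: asli}) in the proof of Theorem~B.
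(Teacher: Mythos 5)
Your proof is correct and is precisely the route the paper has in mind: the paper explicitly calls this corollary ``a simple consequence of Lemma~\ref{lem: ellipsoid app}'', and your two-step argument (apply the lemma with the extreme values $m,M$ of $h/\mathcal{G}^{1/3}$, then compare areas using $E_{in}\subseteq K\subseteq E_{out}$ and $V(K)=\pi$) is the intended deduction. Nothing to add.
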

For another proof of Corollary (\ref{cor: minmax of affine}), see Andrews \cite[Lemma 10]{BA4} where he does not assume that $K$ is origin-symmetric.
\subsection{Proof of Theorem B}
In this section, $C_4,C_5,\cdots$ are absolute positive constants. Moreover, we will repeatedly use Lemma \ref{lem: lower and upper bound for affine associated} without further mention.

Let $K\in\mathcal{F}_e^2$ be a body such that
\[\frac{V(\Gamma K)}{V(K)}\leq \left(\frac{4}{3\pi}\right)^2(1+\varepsilon),~~0<\varepsilon<\varepsilon_0.\]
The value of $\varepsilon_0$ will be determined later.

Let $\Phi\in GL(2)$ such that $1\leq h_{\Phi K}\leq \sqrt{2}$ and let $\{K_t\}$ be the solution to flow (\ref{e: asli}) with $K_0=\Phi K.$
It follows from the comparison principle that there exists $\delta>0$, independent of $K_0$, such that $1/2\leq h_{K_t}\leq \sqrt{2}$, for all $t\in[0,\delta].$
From Corollary \ref{lem: time dev of the ratio} we have
\[\int_{0}^{f(\varepsilon)}\frac{d}{dt}\frac{V(\Gamma K_t)}{V(K_t)}dt=\int_{0}^{f(\varepsilon)}\frac{32V(\Lambda K_t^{\ast})}{3V^2(K_t)V(K_t^{\ast})}\left(1-\frac{V^2(\Lambda K_t^{\ast},K_{t}^{\ast})}{V(\Lambda K_t^{\ast})V(K_t^{\ast})}\right)dt,\]
where $f:[0,1]\to [0,\delta]$ is an increasing, continuous function such that $\lim\limits_{\varepsilon \to 0}\varepsilon/f(\varepsilon)=0$, and $\lim\limits_{\varepsilon \to 0}f(\varepsilon)=0.$
Let $s\in [0,f(\varepsilon)]$ be the time that the integrand of the right hand side is maximized. Therefore, we get
\begin{align}\label{ie: support diff0}
\int_{0}^{f(\varepsilon)}\frac{d}{dt}\frac{V(\Gamma K_t)}{V(K_t)}dt\leq f(\varepsilon)\left[\frac{32V(\Lambda K_s^{\ast})}{3V^2(K_s)V(K_s^{\ast})}\left(1-\frac{V^2(\Lambda K_s^{\ast},K_{s}^{\ast})}{V(\Lambda K_s^{\ast})V(K_s^{\ast})}\right)\right].
\end{align}
By the Busemman-Petty centroid inequality and the stability of Minkowski's mixed volume inequality, we get
\begin{align}\label{ie: support diff}
-\left(\frac{4}{3\pi}\right)^2\varepsilon &\leq -\frac{8}{3}\frac{f(\varepsilon)V(\Lambda K_s^{\ast})}{V^2(K_s)D^2(K_s^{\ast})}\max_{u\in\mathbb{S}^1}\left|\frac{h_{\Lambda K_s^{\ast}}(u)}{V(\Lambda K_s^{\ast})^{1/2}}-
\frac{h_{K_s^{\ast}}(u)}{V(K_s^{\ast})^{1/2}}\right|^2\\
&\leq -C_4 f(\varepsilon)\max_{u\in\mathbb{S}^1}\left|\frac{h_{\Lambda K_s^{\ast}}(u)}{h_{K_s^{\ast}}(u)}-\left(\frac{V(\Lambda K_s^{\ast})}{V(K_s^{\ast})}\right)^{1/2}\right|^2.\nonumber
\end{align}
By the definition of the operator $\Lambda$:
\[\frac{1}{\sqrt{2}}\leq\left(\mathcal{G}_{\Lambda K_s^{\ast}}\frac{V(K_s^{\ast})}{V(K_s)}\right)^{\frac{1}{3}}=h_{K_s^{\ast}}\leq 2.\]
Combining this with inequality (\ref{ie: support diff}) we conclude that
\[C_5\left(\frac{\varepsilon}{f(\varepsilon)}\right)^{1/2}\geq \max_{\mathbb{S}^1}\frac{h_{\Lambda K_s^{\ast}}}{\mathcal{G}^{\frac{1}{3}}_{\Lambda K_s^{\ast}}}-\min_{\mathbb{S}^1}\frac{h_{\Lambda K_s^{\ast}}}{\mathcal{G}^{\frac{1}{3}}_{\Lambda K_s^{\ast}}}.\]
As $V\left(\sqrt{\frac{\pi}{V(\Lambda K_s^{\ast})}}\Lambda K_s^{\ast}\right)=\pi$, by Corollary \ref{cor: minmax of affine}, we get
\[C_5\left(\frac{\varepsilon}{f(\varepsilon)}\right)^{1/2}\left(\frac{\pi}{V(\Lambda K_s^{\ast})}\right)^{2/3}+1\geq \left(\frac{\pi}{V(\Lambda K_s^{\ast})}\right)^{2/3}\max_{\mathbb{S}^1}\frac{h_{\Lambda K_s^{\ast}}}{\mathcal{G}^{\frac{1}{3}}_{\Lambda K_s^{\ast}}}\]
\[1-C_5\left(\frac{\varepsilon}{f(\varepsilon)}\right)^{1/2}\left(\frac{\pi}{V(\Lambda K_s^{\ast})}\right)^{2/3}\leq \left(\frac{\pi}{V(\Lambda K_s^{\ast})}\right)^{2/3}\min_{\mathbb{S}^1}\frac{h_{\Lambda K_s^{\ast}}}{\mathcal{G}^{\frac{1}{3}}_{\Lambda K_s^{\ast}}}.\]
We take $\varepsilon_0$ small enough such that
\[1-C_5\left(\frac{\varepsilon}{f(\varepsilon)}\right)^{1/2}\left(\frac{\pi}{V(\Lambda K_s^{\ast})}\right)^{2/3}\geq 1-C_{6}\left(\frac{\varepsilon}{f(\varepsilon)}\right)^{1/2}>0.\]
So we have proved: If $\varepsilon_0>0$ is small enough, then
 \[\max_{\mathbb{S}^1}\frac{h_{\Lambda K_s^{\ast}}}{\mathcal{G}^{\frac{1}{3}}_{\Lambda K_s^{\ast}}}\leq \left(1+C_{6}\left(\frac{\varepsilon}{f(\varepsilon)}\right)^{1/2}\right)\left(\frac{\pi}{V(\Lambda K_s^{\ast})}\right)^{-2/3}\]
and
\[\min_{\mathbb{S}^1}\frac{h_{\Lambda K_s^{\ast}}}{\mathcal{G}^{\frac{1}{3}}_{\Lambda K_s^{\ast}}}\geq \left(1-C_{6}\left(\frac{\varepsilon}{f(\varepsilon)}\right)^{1/2}\right)\left(\frac{\pi}{V(\Lambda K_s^{\ast})}\right)^{-2/3}.\]
From these last inequalities and Lemma \ref{lem: ellipsoid app} we deduce that
\begin{equation}\label{e: bm1}
d_{\mathcal{BM}}(\Lambda K_s^{\ast},B)\leq \left(\frac{1+C_{6}\left(\frac{\varepsilon}{f(\varepsilon)}\right)^{1/2}}{1-C_{6}\left(\frac{\varepsilon}{f(\varepsilon)}\right)^{1/2}}\right)^{3/2}.
\end{equation}
On the other hand, from (\ref{ie: support diff0}) and the Busemann-Petty centroid inequality we get
\begin{align*}
-\left(\frac{4}{3\pi}\right)^2\varepsilon &\leq f(\varepsilon)\left[\frac{32V(\Lambda K_s^{\ast})}{3V^2(K_s)V(K_s^{\ast})}\left(1-\frac{V^2(\Lambda K_s^{\ast},K_{s}^{\ast})}{V(\Lambda K_s^{\ast})V(K_s^{\ast})}\right)\right]\nonumber\\
&= f(\varepsilon)\left[\frac{32V(\Lambda K_s^{\ast})}{3V^2(K_s)V(K_s^{\ast})}\left(1-\frac{V(K_s^{\ast})}{V(\Lambda K_s^{\ast})}\right)\right].
\end{align*}
Thus
\begin{equation}\label{ie: volume ratio stability}
1\leq \frac{V(K_s^{\ast})}{V(\Lambda K_s^{\ast})}\leq 1+C_7\frac{\varepsilon}{f(\varepsilon)}.
\end{equation}
By the first line of inequality (\ref{ie: support diff}):
\begin{align*}
1-C_{8}\left(\frac{\varepsilon}{f(\varepsilon)}\right)^{1/2}&\leq
\left(\frac{V(K_s^{\ast})}{V(\Lambda K_s^{\ast})}\right)^{1/2}-C_{8}\left(\frac{\varepsilon}{f(\varepsilon)}\right)^{1/2}\\
&\leq \frac{h_{K_s^{\ast}}}{h_{\Lambda K_s^{\ast}}}\leq \left(\frac{V(K_s^{\ast})}{V(\Lambda K_s^{\ast})}\right)^{1/2}+C_{8}\left(\frac{\varepsilon}{f(\varepsilon)}\right)^{1/2}.
\end{align*}
We take $\varepsilon_0$ small enough such that $1-C_{8}\left(\frac{\varepsilon}{f(\varepsilon)}\right)^{1/2}>0.$ So by inequalities (\ref{ie: volume ratio stability}) we obtain
\begin{equation}\label{e: bm2}
d_{\mathcal{BM}}(K_s^{\ast},\Lambda K_s^{\ast})\leq \left(\frac{\left(1+C_7\frac{\varepsilon}{f(\varepsilon)}\right)^{1/2}+C_{8}\left(\frac{\varepsilon}{f(\varepsilon)}\right)^{1/2}}
{1-C_{8}\left(\frac{\varepsilon}{f(\varepsilon)}\right)^{1/2}}\right).
\end{equation}
Combining (\ref{e: bm1}) and (\ref{e: bm2}) we get
\[d_{\mathcal{BM}}(K_s^{\ast},B)\leq g(\varepsilon),\]
where
\[g(\varepsilon):=\left(\frac{1+C_{6}\left(\frac{\varepsilon}{f(\varepsilon)}\right)^{1/2}}{1-C_{6}\left(\frac{\varepsilon}{f(\varepsilon)}\right)^{1/2}}\right)^{3/2}
\left(\frac{\left(1+C_7\frac{\varepsilon}{f(\varepsilon)}\right)^{1/2}+C_{8}\left(\frac{\varepsilon}{f(\varepsilon)}\right)^{1/2}}
{1-C_{8}\left(\frac{\varepsilon}{f(\varepsilon)}\right)^{1/2}}\right).\]
This in turn implies that
\begin{equation}\label{e: bm3}
d_{\mathcal{BM}}(K_s,B)\leq g(\varepsilon).
\end{equation}
By Corollary \ref{cor: displacement bound}, we have
\begin{equation}\label{e: bm4}
d_{\mathcal{BM}}(K_0,K_s)\leq 1+C_{9}f(\varepsilon)^{\frac{1}{2}}+C_{10}f(\varepsilon).
\end{equation}
Consequently, putting (\ref{e: bm3}) and (\ref{e: bm4}) together, we obtain
\[d_{\mathcal{BM}}(K,B)=d_{\mathcal{BM}}(K_0,B)\leq \left(1+C_{9}f(\varepsilon)^{\frac{1}{2}}+C_{10}f(\varepsilon)\right)g(\varepsilon).\]
In particular, with the choice $f(\varepsilon)=\varepsilon^{1/2}$, we get
\[d_{\mathcal{BM}}(K,B)\leq 1+\gamma\varepsilon^{1/4},\]
for $\varepsilon_0$ small enough and $\gamma>0$. Therefore, we have proved the claim for bodies in $\mathcal{F}_e^2.$
An approximation argument will then upgrade the result for bodies in $\mathcal{F}_e^2$ to bodies in the larger class $\mathcal{K}_e^2.$ To get the more general result, we will first need to recall Theorem 1.4 of B\"{o}r\"{o}czky in \cite{B} and a theorem of Campi and Gronchi in \cite{CG1}:
\begin{thm}[B\"{o}r\"{o}czky, \cite{B}]
For any convex body $K$ in $\mathbb{R}^n$ with $d_{\mathcal{BM}}(K,B)\geq 1+\varepsilon$ for some $\varepsilon >0$, there
exists an origin-symmetric convex body $C$ and a constant $\gamma'>0$ depending on $n$ such that $d_{\mathcal{BM}}(C,B)\geq 1+\gamma'\varepsilon^2$ and $C$ results from $K$ as a limit of subsequent Steiner symmetrization and linear transformations.
\end{thm}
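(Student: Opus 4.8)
The plan is to produce $C$ by first moving $K$ into John position with a linear map, reading off from $d_{\mathcal{BM}}(K,B)\geq1+\varepsilon$ a quantitative ``protrusion'' defect, and then running a short explicit sequence of Steiner symmetrizations that makes $K$ origin-symmetric while keeping that defect visible. \emph{Normalization.} After applying a suitable $\Phi\in GL(n)$ we may assume that $B$ is the John (largest inscribed) ellipsoid of $K$, so $B\subseteq K\subseteq nB$ by John's theorem. If $K$ were contained in $(1+\varepsilon)B$, the concentric inclusions $B\subseteq K\subseteq(1+\varepsilon)B$ would give $d_{\mathcal{BM}}(K,B)\leq1+\varepsilon$; hence there is a unit vector $v$ with $h_K(v)\geq1+\varepsilon$, and a rotation lets us take $v=e_1$. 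It will also be important to retain that $K$ touches $\partial B$ along its John contact points, which positively span $\mathbb{R}^n$.

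\emph{Symmetrization.} Set $C:=S_{e_1}S_{e_n}\cdots S_{e_2}K$ and use three elementary facts about Steiner symmetrization: it is inclusion-monotone and fixes $B$, so $B\subseteq C$; $S_u$ does not change the orthogonal projection onto $u^{\perp}$, so for $j\geq2$ the map $S_{e_j}$ leaves the $e_1$-extent of the body untouched, while $S_{e_1}$ merely centers that extent, whence $h_C(e_1)=\tfrac12 w(K,e_1)\geq\tfrac12\bigl(h_K(e_1)+h_K(-e_1)\bigr)\geq1+\varepsilon/2$ (using $-e_1\in B\subseteq K$); and $S_{e_j}$ preserves reflection symmetry in $e_i^{\perp}$ whenever $i\neq j$, so $C$ is symmetric in every coordinate hyperplane and therefore origin-symmetric. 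The auxiliary inclusion $S_uK\subseteq\tfrac12(K+\sigma_uK)$, with $\sigma_u$ the reflection in $u^{\perp}$, is what lets one control how much of the contact structure of $K$ with $\partial B$ is carried along, provided the John position has been chosen so that the reflections $\sigma_{e_j}$ do not increase $h_K$ at the contact directions.

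\emph{From the defect to a Banach--Mazur gap.} Suppose $d_{\mathcal{BM}}(C,B)=1+\mu$; as $C$ is origin-symmetric there is a centered ellipsoid $E$ with $\tfrac1{1+\mu}E\subseteq C\subseteq E$. From $B\subseteq C$ every radius of $E$ is $\geq1$; from $[-e_1,e_1]\subseteq C$ and $h_C(e_1)\geq1+\varepsilon/2$ one gets $h_E(e_1)\geq1+\varepsilon/2$; and from $E\subseteq(1+\mu)C$, together with the retained information that $C$ remains close to $B$ in directions transversal to $e_1$ (this is where the contact points and the specific choice of symmetrizations do the work), all radii of $E$ transversal to $e_1$ are $\leq1+O(\mu)$. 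Then $\tfrac1{1+\mu}E$ is forced to be a comparatively elongated ellipsoid that must nonetheless fit inside the thin part of $C$ near the tips $\pm(1+\varepsilon/2)e_1$, and one checks that this is impossible unless $\mu\geq c'\varepsilon^2$ for a dimensional constant $c'$. Taking $\gamma'$ to absorb the constants gives $d_{\mathcal{BM}}(C,B)\geq1+\gamma'\varepsilon^2$; and since $C$ results from $K$ by finitely many linear maps and Steiner symmetrizations, it certainly lies in the asserted closure.

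\emph{The main obstacle} is the coupling of the last two steps. A single Steiner symmetrization can collapse $d_{\mathcal{BM}}$ from $1+\varepsilon$ all the way to $1$ (symmetrize an elongated ellipsoid along its long axis), so the normalization, the protrusion direction $v$, and the order and directions of the symmetrizations cannot be chosen independently: they must be coordinated so that the final origin-symmetric body $C$ still carries a genuinely non-ellipsoidal feature, namely a protrusion of size $\sim\varepsilon$ sitting on top of a body that is pinched to $B$ in enough directions to pin the optimal ellipsoid near $B$. Making this coordination work for a completely general $K$, and distilling the sharp exponent from it, is the heart of the matter; the appearance of $\varepsilon^2$ rather than $\varepsilon$ is precisely the price of converting a one-dimensional protrusion into a lower bound for the distance to the entire family of ellipsoids.
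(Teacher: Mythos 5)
This statement is quoted from B\"or\"oczky's paper \cite{B} (Theorem~1.4 there, see also the remark following it about Lemma~18 of \cite{Bor}); the present paper does not prove it, it only cites it. So I am comparing your attempt against B\"or\"oczky's theorem itself, not against an argument in this paper.

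Your normalization and symmetrization set-up is reasonable, but two points are glossed in a way that matters. First, putting $B$ as the John ellipsoid of a non-symmetric $K$ generically requires a translation, whereas the statement (and the remark in the paper, citing \cite[Lemma~18]{Bor}) insists on \emph{linear} maps only; this needs an explicit fix. Second, $S_{e_1}$ does not ``center the $e_1$-extent'': for an asymmetric body the support of $S_{e_1}L$ in direction $e_1$ equals half the \emph{maximal chord length} in direction $e_1$, which can be strictly smaller than half the width $h_L(e_1)+h_L(-e_1)$. Your bound $h_C(e_1)\geq 1+\varepsilon/2$ does hold, but only because the prior symmetrizations $S_{e_2},\dots,S_{e_n}$ make the intermediate body symmetric about the $e_1$-axis, which forces the longest $e_1$-chord to pass through the origin; that mechanism should be stated, since without it the estimate is simply false.

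The genuine gap is the third step. You assert that the radii of the circumscribed ellipsoid $E$ transversal to $e_1$ are $1+O(\mu)$ and that ``one checks'' the tip forces $\mu\geq c'\varepsilon^2$, but neither assertion is proved, and the first one is precisely where the difficulty lies: Steiner symmetrization can destroy all distance to ellipsoids (your own elongated-ellipsoid example), and nothing in your construction rules out that $C$ itself is, or is within $o(\varepsilon^2)$ of, an ellipsoid with one semiaxis $1+\varepsilon/2$. The contact structure you appeal to is never used quantitatively. Your concluding paragraph explicitly concedes that coordinating the normalization, the protrusion direction, and the symmetrization order ``is the heart of the matter''---which is an accurate description of exactly what has been left out. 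As written, the proposal is a plausible outline of the strategy behind B\"or\"oczky's theorem, but the lower bound $d_{\mathcal{BM}}(C,B)\geq 1+\gamma'\varepsilon^2$ is not established.
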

\begin{remark}
In the statement of \cite[Theorem 1.4]{B}, it is mentioned that for an arbitrary convex body $K$, $C$ results from $K$ as a limit of subsequent Steiner symmetrization and affine transformations. However, no translation is needed. See Lemma 18 of \cite{Bor}.
\end{remark}
By means of shadow system, Campi and Gronchi proved the following theorem:
\begin{thm}[Campi and Gronchi, \cite{CG1}]
Let $K\in\mathcal{K}^n.$ The ratio $\frac{V(\Gamma K)}{V(K)}$ is non-increasing after a Steiner symmetrization applied to $K.$
\end{thm}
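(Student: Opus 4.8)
The plan is to realise the Steiner symmetral as the midpoint of a one‑parameter shadow system and to exploit the convexity that shadow systems produce. Since Steiner symmetrization preserves volume, the assertion is equivalent to $V(\Gamma S_uK)\le V(\Gamma K)$, where $S_u$ denotes symmetrization with respect to the hyperplane $u^{\perp}$ through the origin. Writing $x=(y,z)$ with $y\in u^{\perp}$ and $z\in\mathbb{R}u$, and letting $[f(y),g(y)]$ be the chord of $K$ over a point $y$ of the projection $P=K\,|\,u^{\perp}$ (so $f$ is convex and $g$ concave on $P$), I would introduce the family $\{K_t\}_{t\in[0,1]}$ whose chord over $y$ is $[(1-t)f(y)-tg(y),\,(1-t)g(y)-tf(y)]$; equivalently, $K_t$ is obtained from $K$ by translating the chord over each $y$ by $-t\,(f(y)+g(y))\,u$. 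One checks at once that each $K_t$ is a convex body: the lower boundary $(1-t)f-tg$ is a sum of two convex functions, the upper boundary $(1-t)g-tf$ a sum of two concave functions, and the two differ by $g-f\ge 0$. Moreover $K_0=K$, $K_{1/2}=S_uK$, and $K_1=R_uK$ is the reflection of $K$ in $u^{\perp}$, which belongs to $O(n)$.

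Because $K_t$ arises from $K$ by translating slices perpendicular to $u^{\perp}$, Cavalieri's principle gives $V(K_t)\equiv V(K)$, so the normalising constant of $\Gamma K_t$ stays frozen. Writing $h_{\Gamma K_t}(w)=\frac{1}{V(K)}\int_{K_t}|\langle w,x\rangle|\,dx$, slicing over $P$ and undoing the fiberwise translation, I obtain for every $w\in\mathbb{S}^{n-1}$
\[
h_{\Gamma K_t}(w)=\frac{1}{V(K)}\int_{K}\bigl|\,\langle w,(y,z)\rangle-t\,(f(y)+g(y))\,\langle w,u\rangle\,\bigr|\,dy\,dz ,
\]
where $\langle w,(y,z)\rangle$ is the value of $w$ at the point $y+zu$. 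For fixed $(y,z)$ the integrand is the modulus of an affine function of $t$, so $t\mapsto h_{\Gamma K_t}(w)$ is convex for every $w$.

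The crux — and the step I expect to be the main obstacle — is to promote this pointwise convexity of support functions to convexity of $t\mapsto V(\Gamma K_t)$ on $[0,1]$. This does not follow formally: pointwise convexity of support functions only gives $\Gamma K_t\subseteq(1-t)\Gamma K_0+t\,\Gamma K_1$, and the volume of a Minkowski average can strictly exceed the common value $V(\Gamma K_0)=V(\Gamma K_1)$, so one genuinely has to use the extra structure the family $\{\Gamma K_t\}$ inherits from the shadow system; establishing this convexity is the technical heart of the shadow‑system treatment of centroid bodies. I would pursue it either by writing $V(\Gamma K_t)$ as an explicit integral — in the plane, using Petty's regularity $\Gamma K_t\in\mathcal{C}^2_+$, one has $V(\Gamma K_t)=\frac{1}{2}\int_0^{2\pi}\bigl(h_{\Gamma K_t}^2-(\partial_\theta h_{\Gamma K_t})^2\bigr)\,d\theta$ — and differentiating twice in $t$, using the representations of $\partial_t h_{\Gamma K_t}$ and $\partial_t\partial_\theta h_{\Gamma K_t}$ as signed integrals over $K$ to control the sign of $\frac{d^2}{dt^2}V(\Gamma K_t)$; or through the shadow‑system calculus of Shephard and Rogers--Shephard (convexity of the volume of a shadow system and of the volumes of its projections) combined with the class reduction relating $\Gamma$ to $\Pi$ and $\Lambda$ recalled above. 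Granting this convexity, the proof closes quickly: $V(\Gamma K_0)=V(\Gamma K)$, and since $K_1=R_uK$ with $R_u\in O(n)$, Petty's identity $\Gamma(\Phi K)=\Phi\,\Gamma K$ gives $\Gamma K_1=R_u\Gamma K$ and hence $V(\Gamma K_1)=V(\Gamma K)$ as well; convexity then forces $V(\Gamma S_uK)=V(\Gamma K_{1/2})\le\frac{1}{2}\bigl(V(\Gamma K_0)+V(\Gamma K_1)\bigr)=V(\Gamma K)$. Dividing by $V(S_uK)=V(K)$ gives $V(\Gamma S_uK)/V(S_uK)\le V(\Gamma K)/V(K)$, as claimed.
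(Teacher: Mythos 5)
Your construction is exactly the parallel-chord shadow system underlying Campi and Gronchi's argument; the paper itself offers no proof of this statement, merely citing \cite{CG1} and remarking that it is proved ``by means of shadow system.'' The parts you carry out are correct: $V(K_t)\equiv V(K)$ by Cavalieri, the identifications $K_0=K$, $K_{1/2}=S_uK$, $K_1=R_uK$, the convexity of each $K_t$ for $t\in[0,1]$, the change-of-variables formula for $h_{\Gamma K_t}(w)$, the pointwise $t$-convexity of that support function, and the endpoint identity $V(\Gamma K_1)=V(\Gamma K)$ via $\Gamma(R_uK)=R_u\Gamma K$. You are also right that pointwise convexity of support functions yields only $\Gamma K_t\subseteq(1-t)\Gamma K_0+t\,\Gamma K_1$, which is insufficient.

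What remains is a genuine gap, not a detail: the convexity of $t\mapsto V(\Gamma K_t)$ is essentially the whole theorem, and you leave it as a proposal. In \cite{CG1} it is supplied by a structural lemma: if $\{K_t\}$ is a shadow system (parallel chord movement) of constant volume along $u$, then $\{\Gamma K_t\}$ is again a shadow system along $u$, and the Rogers--Shephard convexity of volume along shadow systems then gives what is needed. Neither of your two proposed substitutes reaches that lemma. The planar second-derivative route has to contend with $t\mapsto|\langle w,x\rangle - t(f(y)+g(y))\langle w,u\rangle|$ being only piecewise affine, so $h_{\Gamma K_t}$ is not twice differentiable in $t$ and the sign of a distributional $\partial_t^2 V(\Gamma K_t)$ cannot be read off term by term; and the class-reduction route via $\Pi$ and $\Lambda$ stalls because polarity and the curvature-image operator do not respect shadow systems. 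To complete the argument you must prove directly that $\Gamma$ carries constant-volume shadow systems to shadow systems; with that lemma in hand, everything else you wrote closes the proof.
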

Now we give the proof in the general case. We prove by contraposition. Let $K\in \mathcal{K}^2$ be a convex body such that \[d_{\mathcal{BM}}(K,B)>1+\left(\frac{\gamma}{\gamma'}\right)^{\frac{1}{2}}\varepsilon^{\frac{1}{8}}.\] Then for an origin-symmetric convex body $C$, by the last two theorems, we have $\frac{V(\Gamma C)}{V(C)}\leq \frac{V(\Gamma K)}{V(K)}$ and $d_{\mathcal{BM}}(C,B)> 1+\gamma\varepsilon^{\frac{1}{4}}.$ Therefore,
\[\frac{V(\Gamma K)}{V(K)}\geq \frac{V(\Gamma C)}{V(C)}> \left(\frac{4}{3\pi}\right)^2(1+\varepsilon).\]
The proof of Theorem B is complete.
\section{higher order regularity}
In Section \ref{sec: Convergence in the Hausdorff metric} we proved for a sequence of times $\{t_k\}_k$ we have $\lim\limits_{t_k\to T}\frac{V(\Gamma K_{t_k})}{V(K_{t_k})}=\left(\frac{4}{3\pi}\right)^2.$ So by the monotonicity of $\frac{V(\Gamma K_t)}{V(K_t)}$, Corollary \ref{cor: monotonicity of petty-centroid}, we obtain
$\lim\limits_{t\to T}\frac{V(\Gamma K_t)}{V(K_t)}=\left(\frac{4}{3\pi}\right)^2.$ Hence, Theorem B implies that for each time $t\in[0,T)$ there exists a linear transformation $\Phi_t\in SL(2)$ such that
\begin{equation}\label{e: limit}
\lim_{t\to T}\frac{r_+(\Phi_tK_t)}{r_-(\Phi_tK_t)}=1.
\end{equation}
To obtain higher order regularity, we closely follow \cite{Ivaki4} with minor modifications. To this aim, (\ref{e: limit}) plays a basic role.

Proof of the next lemma is omitted, because of its similarity to the proof of Lemma 4.2 in \cite{Ivaki4} (see also \cite[Section 2]{Ivaki5}); the proof employs the method of Andrews introduced in \cite{BA1}.
\begin{lemma}[Harnack estimate]\label{lem: Harnack est} Under the evolution equation (\ref{e: asli}) we have
\[\partial_t\left(\frac{\mathcal{G}}{h^2}t^{\frac{1}{2}}\right)\geq 0,\mbox{~e.q.,~}\partial_t\left(\frac{\mathcal{G}}{h^2}\right)\geq -\frac{1}{2t}\left(\frac{\mathcal{G}}{h^2}\right).\]
\end{lemma}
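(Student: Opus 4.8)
The plan is to run the standard Li--Yau--Hamilton scheme for differential Harnack estimates, adapted to the centro-affine flow exactly as Andrews does in \cite{BA1} and as the author does for the $p$-flows in \cite{Ivaki4,Ivaki5}. Write $F:=\mathcal{G}/h^{2}=h^{-2}\mathcal{S}^{-1}$ for the speed, so that along \eqref{e: asli} we have $\partial_t h=-F$ and $\partial_t\mathcal{S}=-(F_{\theta\theta}+F)$. The computation already performed in the proof of Proposition \ref{prop: ev speed} says precisely
\[\partial_t F=a\,(F_{\theta\theta}+F)+2h^{-1}F^{2},\qquad a:=h^{-2}\mathcal{S}^{-2}=h^{2}F^{2}>0.\]
Passing to $L:=\log F$ and using $F_{\theta\theta}/F=L_{\theta\theta}+L_{\theta}^{2}$, this becomes $L_t=aL_{\theta\theta}+aL_{\theta}^{2}+a+b$ with $b:=2h^{-1}F>0$.

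The \emph{key step} is the evolution equation of the Harnack quantity $Z:=\partial_t\log F=L_t$. Differentiating the equation for $L_t$ in $t$, commuting $\partial_t$ with $\partial_\theta$, and substituting $\partial_t\log a=2Z-b$ and $\partial_t\log b=Z+\tfrac{b}{2}$ (both read off from $a=h^{2}F^{2}$, $b=2h^{-1}F$, and $\partial_t\log h=-F/h$), together with $a(L_{\theta\theta}+L_{\theta}^{2}+1)=Z-b$, one obtains
\[Z_t=aZ_{\theta\theta}+2aL_{\theta}Z_\theta+2Z^{2}-2bZ+\tfrac32 b^{2}.\]
I expect this to be essentially the only genuine obstacle: it is a purely mechanical calculation, but one must keep careful track of signs, and the precise coefficients $2$ (in front of $Z^{2}$) and $\tfrac32$ (in front of $b^{2}$) are what make the argument close.

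Granting this, fix $T'<T$; on $\mathbb{S}^{1}\times[0,T']$ the functions $F$ and $\partial_t F$ are smooth with $F$ bounded below by a positive constant, so $Z$ is bounded there, whence $W:=Z+\tfrac1{2t}\to+\infty$ as $t\to0^{+}$. Since $\partial_t(Ft^{1/2})=t^{1/2}F\,W$ and $t^{1/2}F>0$, it suffices to show $W\ge0$ on $\mathbb{S}^{1}\times(0,T']$. Suppose not, and pick $t_1$ with $\psi(t_1)<0$, where $\psi(t):=\min_{\theta}W(\theta,t)$. At a spatial minimizer at such a time we have $Z_\theta=W_\theta=0$, $Z_{\theta\theta}=W_{\theta\theta}\ge0$, and $Z=W-\tfrac1{2t}<-\tfrac1{2t}<0$; hence $-2bZ+\tfrac32 b^{2}>0$, and the evolution equation for $Z$ gives $Z_t\ge 2Z^{2}$ at that point. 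By Hamilton's trick, wherever $\psi<0$,
\[\psi'(t)\ \ge\ Z_t-\frac{1}{2t^{2}}\ \ge\ 2Z^{2}-\frac{1}{2t^{2}}\ =\ 2\Big(\psi-\frac{1}{2t}\Big)^{2}-\frac{1}{2t^{2}}\ =\ 2\psi^{2}-\frac{2\psi}{t}\ >\ 0.\]
Thus on any interval where $\psi<0$ it is strictly increasing; combined with $\psi(0^{+})=+\infty$ this is impossible (taking $t_*:=\sup\{t<t_1:\psi(t)\ge0\}$ gives $\psi(t_*)=0$ with $\psi$ strictly increasing on $(t_*,t_1)$, contradicting $\psi(t_1)<0$). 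Hence $W\ge0$ on $(0,T']$; letting $T'\uparrow T$ gives $\partial_t\big(\tfrac{\mathcal{G}}{h^{2}}t^{1/2}\big)\ge0$ on $(0,T)$, and dividing by $t^{1/2}$ yields the equivalent form $\partial_t\big(\tfrac{\mathcal{G}}{h^{2}}\big)\ge-\tfrac1{2t}\tfrac{\mathcal{G}}{h^{2}}$.
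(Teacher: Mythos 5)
Your proof is correct and follows precisely the approach the paper intends: the paper omits the argument with a pointer to Lemma 4.2 of \cite{Ivaki4}, \cite[Section 2]{Ivaki5}, and Andrews's differential Harnack method \cite{BA1}, and you have carried out exactly that Li--Yau--Hamilton scheme for the speed $F=\mathcal{G}/h^{2}$ of the flow \eqref{e: asli}. I verified the evolution equation $Z_t=aZ_{\theta\theta}+2aL_{\theta}Z_{\theta}+2Z^{2}-2bZ+\tfrac32 b^{2}$ and the maximum-principle/ODE-comparison argument (including the sign of $-2bZ+\tfrac32 b^{2}$ when $Z<0$, the blow-up of $W$ as $t\to0^{+}$, and the use of Hamilton's trick); everything checks out.
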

\begin{lemma}\label{lem: app1} For each fixed $u\in\mathbb{S}^1$ and define
\[Q(t):=\frac{1}{2}(h_{K_t}(u)-h_{K_{t_0}}(u))+(t-t_0)\left(\frac{\mathcal{G}}{h^2}\right)(u,K_t),\]
on the time interval $\in[t_0,T)$.
Then $Q(t)\ge0$ on $[t_0,T).$
\end{lemma}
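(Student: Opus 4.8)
The plan is to differentiate $Q$ in time at a fixed $u$ and substitute the Harnack estimate of Lemma~\ref{lem: Harnack est}. First I would record the elementary facts: since $\mathcal{S}=1/\mathcal{G}$, the flow (\ref{e: asli}) reads $\partial_t h(u,t)=-F(u,t)$, where $F:=\mathcal{G}/h^{2}=1/(h^{2}\mathcal{S})$, and $F>0$ on the whole interval of existence by Corollary~\ref{cor: Convexity is preserved} together with $0\in\operatorname{int}K_t$. Fixing $u$ and abbreviating $F(t):=F(u,K_t)$, one has $Q(t_0)=0$, and, differentiating in $t$ (which is legitimate by the short-time smoothness of the solution),
\[
Q'(t)=\tfrac12\,\partial_t h(u,t)+F(t)+(t-t_0)\,\partial_t F(t)=\tfrac12\,F(t)+(t-t_0)\,\partial_t F(t).
\]

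Next I would invoke Lemma~\ref{lem: Harnack est} in its differential form $\partial_t F\ge-\frac{1}{2t}F$, valid for $t\in(0,T)$. Substituting this into the expression for $Q'$ gives
\[
Q'(t)\ge\tfrac12\,F(t)-\frac{t-t_0}{2t}\,F(t)=\frac{t_0}{2t}\,F(t)\ge 0
\]
for all $t\in[t_0,T)$ (the case $t_0=0$ being trivially covered). Hence $Q$ is non-decreasing on $[t_0,T)$, and since $Q(t_0)=0$ we conclude $Q(t)\ge 0$ there, which is the assertion.

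The argument presents essentially no obstacle once Lemma~\ref{lem: Harnack est} is in hand: all the analytic work lies in that Andrews-type Harnack inequality, which is quoted. Equivalently, one could argue by integrating the flow: $h_{K_{t_0}}(u)-h_{K_t}(u)=\int_{t_0}^{t}F(s)\,ds$, and monotonicity of $s\mapsto F(s)s^{1/2}$ gives $F(s)\le F(t)(t/s)^{1/2}$ for $t_0\le s\le t$, whence $h_{K_{t_0}}(u)-h_{K_t}(u)\le 2F(t)\,t^{1/2}\bigl(t^{1/2}-t_0^{1/2}\bigr)=2F(t)\bigl(t-(tt_0)^{1/2}\bigr)\le 2(t-t_0)F(t)$, using $(tt_0)^{1/2}\ge t_0$; rearranging yields $Q(t)\ge 0$. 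The only points to be careful about are the positivity of $F$ and the differentiability of $F$ in $t$, both of which follow from the regularity results already established in Section~\ref{sec: basic prop}.
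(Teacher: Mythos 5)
Your proof is correct and takes essentially the same route as the paper: compute $\partial_t Q=\tfrac12 F+(t-t_0)\partial_t F$ with $F=\mathcal{G}/h^2$ and feed in the Harnack estimate of Lemma~\ref{lem: Harnack est}. The one (minor) difference is that the paper first time-translates the flow so that the Harnack estimate takes the form $\partial_t F\ge-\frac{1}{2(t-t_0)}F$, yielding $\partial_tQ\ge0$ exactly, whereas you apply the untranslated estimate $\partial_t F\ge-\frac{1}{2t}F$ and obtain the marginally stronger bound $\partial_tQ\ge\frac{t_0}{2t}F$; both are valid, since the untranslated Harnack already holds at every $t\in(0,T)$. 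Your alternative integral argument, using $\int_{t_0}^t F(s)\,ds\le 2F(t)t^{1/2}\bigl(t^{1/2}-t_0^{1/2}\bigr)\le 2(t-t_0)F(t)$, is also correct and is simply the integrated version of the same estimate.
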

\begin{proof}
In light of Lemma \ref{lem: Harnack est}, after a time translation, we get
\[\partial_t\left(\frac{\mathcal{G}}{h^2}\right)\geq -\frac{1}{2(t-t_0)}\left(\frac{\mathcal{G}}{h^2}\right),~\mbox{for~all~}t>t_0.\]
So the time derivative of $Q$, given by $\partial_tQ=\frac{1}{2}\frac{\mathcal{G}}{h^2}+(t-t_0)\frac{\partial}{\partial t}\left(\frac{\mathcal{G}}{h^2}\right),$
is non-negative and moreover $Q(t_0)=0$.
\end{proof}
Next is an adjustment of the argument of Andrews and McCoy presented in Section 12 of \cite{BM}. We will obtain a lower bound on $\displaystyle\mathcal{G}/h^3$, the centro-affine curvature. The lower bound of the next lemma then conveniently provides a uniform lower bound for the Gauss curvature of the normalized solution. In what follows, a key property of $\displaystyle\mathcal{G}/h^3$ will repeatedly be used: For every $\Phi\in SL(2)$ and $K\in\mathcal{F}^2_0$, we have
\[\min_{u\in\mathbb{S}^1}\frac{\mathcal{G}}{h^3}(u,K)=\min_{u\in\mathbb{S}^1}\frac{\mathcal{G}}{h^3}(u,\Phi K)\mbox{~and~}
\max_{u\in\mathbb{S}^1}\frac{\mathcal{G}}{h^3}(u,K)=\max_{u\in\mathbb{S}^1}\frac{\mathcal{G}}{h^3}(u,\Phi K).\]
\begin{lemma}\label{lem: lower bound on the speed}
There exist an absolute constant $C>0$ and a time $t_{\ast}<T$, such that for each $t\geq t_{\ast}$ we have
\[\frac{\mathcal{G}}{h^3}(\cdot,t):=\frac{\mathcal{G}}{h^3}(\cdot,K_t)\geq \frac{C}{T-t}.\]
\end{lemma}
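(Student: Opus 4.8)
The plan is to use the comparison principle together with the self-similarity of flow (\ref{e: asli}) on ellipses, reducing the lemma to a single application of Lemma \ref{lem: app1} in which the reference time is chosen as a suitable function of $t$. No rescaling is needed: the argument is carried out directly for $K_t$.

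\textbf{The flow on ellipses.} Since $\mathcal{G}/h^3$ is $SL(2)$-invariant and scales by $\lambda^{-4}$ under a homothety of ratio $\lambda$, every origin-centred ellipse $E$ has \emph{constant} centro-affine curvature $\mathcal{G}_E/h_E^3\equiv \pi^2/V(E)^2$. Consequently $h(\cdot,\sigma):=\mu(\sigma)h_E(\cdot)$ with $\mu(\sigma)^4=1-4\pi^2(\sigma-t_0)/V(E)^2$ solves (\ref{e: asli}) with $h(\cdot,t_0)=h_E$, and by uniqueness (Theorem A) it is the solution issuing from $E$ at time $t_0$; it becomes extinct at $t_0+V(E)^2/(4\pi^2)$. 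Next, by (\ref{e: limit}) we have $d_{\mathcal{BM}}(K_{t_0},B)\le r_+(\Phi_{t_0}K_{t_0})/r_-(\Phi_{t_0}K_{t_0})\to1$ as $t_0\to T$; since $K_{t_0}$ is origin-symmetric this yields origin-centred homothetic ellipses $E^-\subseteq K_{t_0}\subseteq E^+=(1+\delta)E^-$ with $\delta=\delta(t_0)\to0$. Flowing $E^\pm$ from time $t_0$ and invoking the comparison principle (as in the proof of Proposition \ref{prop: area goes to zero}) gives, for $\sigma\ge t_0$, the sandwich $\mu^-(\sigma)E^-\subseteq K_\sigma\subseteq \mu^+(\sigma)E^+$; comparing the extinction times $T^\pm=t_0+V(E^\pm)^2/(4\pi^2)$ forces $T^-\le T\le T^+$ and $T^+-t_0\le(1+\delta)^4(T^--t_0)\le(1+\delta)^4(T-t_0)$, hence $T^+-t_0=(1+O(\delta))(T-t_0)$ and $T^+-T=O(\delta)(T-t_0)$.

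\textbf{The estimate.} Given $t$ close to $T$, set $t_0:=2t-T$, so that $T-t_0=2(T-t)$ and $t-t_0=T-t$. Applying Lemma \ref{lem: app1} at time $t$ with reference time $t_0$, and using $\mathcal{G}/h^3=h^{-1}\cdot\mathcal{G}/h^2$,
\[
\frac{\mathcal{G}}{h^3}(u,K_t)\ \ge\ \frac{1}{2(T-t)}\Big(\frac{h_{K_{t_0}}(u)}{h_{K_t}(u)}-1\Big)\qquad(u\in\mathbb{S}^1).
\]
Since $h_{K_{t_0}}\ge h_{E^-}=(1+\delta)^{-1}h_{E^+}$ and $h_{K_t}\le\mu^+(t)\,h_{E^+}$, the quotient is at least $\big((1+\delta)\mu^+(t)\big)^{-1}$, while the previous paragraph gives
\[
\mu^+(t)^4=\frac{T^+-t}{T^+-t_0}=\frac{(1+O(\delta))(T-t)}{(1+O(\delta))\,2(T-t)}=\tfrac12\big(1+O(\delta)\big),
\]
so $\big((1+\delta)\mu^+(t)\big)^{-1}=2^{1/4}\big(1+O(\delta)\big)$. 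Therefore, for every $u\in\mathbb{S}^1$,
\[
\frac{\mathcal{G}}{h^3}(u,K_t)\ \ge\ \frac{2^{1/4}-1+O(\delta)}{2(T-t)}.
\]
As $2^{1/4}-1>0$ and $\delta(t_0)\to0$ when $t\to T$, there is $t_\ast<T$ such that for $t\ge t_\ast$ the right-hand side is $\ge\frac{2^{1/4}-1}{4(T-t)}$, which proves the lemma with $C=\frac{2^{1/4}-1}{4}$.

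\textbf{Where the difficulty lies.} The short final computation conceals the real point: the trapping ellipses must be genuine homothets $E^+=(1+\delta)E^-$, which is exactly what $d_{\mathcal{BM}}(K_{t_0},B)=1+\delta$ gives for origin-symmetric bodies, and this is what allows the comparison principle to pin $T$ between the explicit extinction times $T^\pm$ with $T^+-T=O(\delta)(T-t_0)$, so that the loss factor $\mu^+(t)$ stays strictly below $2^{1/4}$. The remaining ingredients—the self-similarity of the ellipse solutions, the avoidance/comparison principle, and Lemma \ref{lem: app1}—are all already available.
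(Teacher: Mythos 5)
Your proof is correct and rests on the same core mechanism as the paper's: sandwich $K_{t_0}$ between two concentric self-similar solutions of (\ref{e: asli}), feed the resulting drop in the support function into the Harnack-type Lemma \ref{lem: app1}, and use (\ref{e: limit}) to make the sandwich tight enough that the lower bound on $\mathcal{G}/h^3$ is positive. Two bookkeeping choices differ, one of them a genuine simplification. You compare directly with origin-centred homothetic ellipses and keep $K_t$ fixed, whereas the paper applies $\Phi_\tau\in SL(2)$ and compares with disks; since the flow is $SL(2)$-equivariant and $\mathcal{G}/h^3$ is $SL(2)$-invariant, these are the same estimate in different coordinates. More usefully, you choose the reference time $t_0=2t-T$ as a function of the target time $t$, which yields the bound at every $t\geq t_*$ in one stroke; the paper instead parameterizes by the start time $\tau$, proves the bound only at $\tau^{\ast}=\tau+r_-(\Phi_\tau K_\tau)^4/8$, and then needs a separate (and somewhat glossed-over) surjectivity/continuity argument to see that $\tau\mapsto\tau^{\ast}$ covers a final time interval — a step your parameterization avoids entirely. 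Finally, the paper fixes a pinching ratio $1\le\eta<1.5^{1/4}$ once and obtains a constant depending on $\eta$, while you let $\delta(t_0)\to0$ and land on the explicit constant $(2^{1/4}-1)/4$; both are legitimate ways to exploit (\ref{e: limit}).
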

\begin{proof}
After a time shift, we may assume without loss generality that by (\ref{e: limit}) we have for a fixed $1\leq \eta< 1.5^{\frac{1}{4}}:$
\[\frac{r_{+}(\Phi_{\tau}K_\tau)}{r_{-}(\Phi_{\tau}K_\tau)}\leq \eta\]
for all $\tau\geq 0.$ Fix a $\tau\geq 0.$ Let $B_{r(t)}$ and $B_{R(t)}$ be solutions to flow (\ref{e: asli}) starting with $B_{r_{-}(\Phi_{\tau}K_{\tau})}$ and $B_{\eta r_{-}(\Phi_{\tau}K_{\tau})}$ respectively. Radii $R(t)$ and $r(t)$ are explicitly given by
\begin{equation}\label{eq: exp R}
R(t)=\left[\eta r_{-}(\Phi_{\tau}K_{\tau})^{4}-4(t-\tau)\right]^{\frac{1}{4}},
\end{equation}
\[r(t)=\left[r_{-}(\Phi_{\tau}K_{\tau})^{4}-4(t-\tau)\right]^{\frac{1}{4}},\]
and by the comparison principle
\[B_{r(t)} \subseteq \Phi_{\tau}K_t\subseteq B_{R(t)},\]
 for all $\tau \leq t\leq \tau+\frac{(r_{-}(\Phi_{\tau}K_{\tau}))^{4}}{4}.$ So we must have $T\geq\tau+\frac{(r_{-}(\Phi_{\tau}K_{\tau}))^{4}}{4}.$ Take $\tau^{\ast}:=\tau+\frac{(r_{-}(\Phi_{\tau}K_{\tau}))^{4}}{8}$ and an arbitrary $u\in\mathbb{S}^1$. By Lemma \ref{lem: app1} and equation (\ref{eq: exp R}) we obtain:
\begin{align*}
2\left[\eta^{4}-0.5\right]^{\frac{1}{4}}&r_{-}(\Phi_{\tau}K_{\tau})\frac{\mathcal{G}}{h^3}(u,\Phi_{\tau}K_{\tau^{\ast}})\\
&=2 R(\tau^{\ast})\frac{\mathcal{G}}{h^3}(u,\Phi_{\tau}K_{\tau^{\ast}})\\
&\geq\frac{2\mathcal{G}}{h^2}(u,\Phi_{\tau}K_{\tau^{\ast}})\\
&\geq \frac{h_{\Phi_{\tau}K_{\tau}}(u)-h_{\Phi_{\tau}K_{\tau^{\ast}}}(u)}{\tau^{\ast}-\tau}\\
&\geq \frac{8(r_{-}(\Phi_{\tau}K_{\tau})-R(\tau^{\ast}))}{(r_{-}(\Phi_{\tau}K_{\tau}))^{4}}\\
&=\frac{8\left(1-\left[\eta^{4}-0.5\right]^{\frac{1}{4}}\right)}{(r_{-}(\Phi_{\tau}K_{\tau}))^{3}}.
\end{align*}
Thus, we get
\[\frac{\mathcal{G}}{h^3}(u,\Phi_{\tau}K_{\tau^{\ast}})\geq \frac{8C}{(r_{-}(\Phi_{\tau}K_{\tau}))^{4}},~\mbox{for~all~}u\in\mathbb{S}^1\]
and some positive absolute constant $C$.
Recall that
\[T\geq\tau+\frac{(r_{-}(\Phi_{\tau}K_{\tau}))^{4}}{4}=\tau^{\ast}+\frac{(r_{-}(\Phi_{\tau}K_{\tau}))^{4}}{8}.\]
and $\mathcal{G}/h^3$ is invariant under $SL(2)$. Therefore, we conclude that
\[\frac{\mathcal{G}}{h^3}(u,\tau^{\ast})=\frac{\mathcal{G}}{h^3}(u,K_{\tau^{\ast}})\geq\frac{8C}{(r_{-}(\Phi_{\tau}K_{\tau}))^{4}}
\geq\frac{C}{T-\tau^{\ast}}.\]
Defining function $f$ on the time interval $[t_{\ast},T)$ by $f(\tau)=\tau+\frac{(r_{-}(\Phi_{\tau}K_{\tau}))^{4}}{8}-t,$
and using Proposition \ref{prop: area goes to zero} (which says $\lim\limits_{t\to T}V(\Phi_tK_t)=\lim\limits_{t\to T}V(K_t)=0$), it is not difficult to see that each $t\geq t_{\ast}:=\frac{(r_{-}(\Phi_{0}(K_{0})))^{4}}{8}$ can be written as $t=\tau+\frac{(r_{-}(\Phi_{\tau}K_{\tau}))^{4}}{8}$ for a $\tau\geq 0.$
\end{proof}
\begin{remark}
The comparison principle implies that for each $t\in[0,T):$
\begin{equation}\label{e: controling the radii}
\frac{(r_{-}(\Phi_tK_t))^{4}}{4}\leq T-t\leq \frac{(r_{+}(\Phi_tK_t))^{4}}{4}\leq \frac{(\eta r_{-}(\Phi_tK_t))^{4}}{4}.
\end{equation}
\end{remark}
\begin{lemma}\label{lem: upper bound on the speed}
There exists an absolute constant $C'>0$ such that on the time interval $T/2\leq t<T$ we have
\[\left(\frac{\mathcal{G}}{h^{3}}\right)(\cdot,K_t)\leq \frac{C'}{T-t}.\]
\end{lemma}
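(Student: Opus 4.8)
The plan is to run Tso's trick \cite{Tso} on the quantity $h^{-2}\mathcal{S}^{-1}=\mathcal{G}/h^{2}$, exactly as in Lemmas~\ref{lem: upper bound for speed} and~\ref{lem: upper G}, but now comparing the evolving body from inside and from outside with the \emph{shrinking} disks furnished by the pinching estimate (\ref{e: controling the radii}). The extra difficulty compared with Lemma~\ref{lem: upper G} is that, since $\mathcal{G}$ blows up as $t\to T$, Tso's trick has to be started at a time at which the auxiliary quantity is already large; the decisive ingredient will be a cubic reaction term in the resulting differential inequality that makes this harmless.

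First I would pass to the normalized picture. Fix $\tau<T$ close to $T$, let $\Phi_{\tau}\in SL(2)$ be the map from (\ref{e: limit}), and set $\tilde K_{t}:=\Phi_{\tau}K_{t}$ and $r:=r_{-}(\Phi_{\tau}K_{\tau})$. Since $\mathcal{G}/h^{3}$ is $SL(2)$-invariant and the flow (\ref{e: asli}) is $SL(2)$-equivariant, it suffices to bound $\mathcal{G}/h^{3}$ along $\tilde K_{t}$. By (\ref{e: controling the radii}) we have $\tfrac{r^{4}}{4}\le T-\tau\le\tfrac{\eta^{4}r^{4}}{4}$ and $B_{r}\subseteq\Phi_{\tau}K_{\tau}\subseteq B_{\eta r}$, so by the comparison principle $\tilde K_{t}$ stays squeezed between the shrinking disks $B_{a(t)}$ and $B_{b(t)}$, where $a(t)^{4}=r^{4}-4(t-\tau)$ and $b(t)^{4}=\eta^{4}r^{4}-4(t-\tau)$, for all $t\in[\tau,\tau+\tfrac18 r^{4}]\subseteq[\tau,T)$; on this interval $2^{-1/4}r\le a(t)\le h_{\tilde K_{t}}\le b(t)\le\eta r$.

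Next I would apply Tso's trick on $[\tau,\tau+\tfrac18 r^{4}]$ to $\Omega:=\dfrac{h^{-2}\mathcal{S}^{-1}}{h-\rho}$ with the \emph{fixed} constant $\rho:=2^{-5/4}r$ (so that $h\ge 2\rho$ and hence $h-\rho\ge\rho>0$ throughout). The computation of Lemma~\ref{lem: upper G} gives, at an interior spatial maximum of $\Omega$, the inequality $\partial_{t}\Omega\le\Omega^{2}\bigl(4-\rho\,\mathcal{G}\bigr)$; but now, writing $\mathcal{G}=\Omega\,h^{2}(h-\rho)\ge\Omega(2\rho)^{2}\rho=4\rho^{3}\Omega$, we get $\rho\mathcal{G}\ge 4\rho^{4}\Omega=\tfrac18 r^{4}\Omega$, so at the maximum $\partial_{t}\Omega\le-\tfrac18 r^{4}\Omega^{3}+4\Omega^{2}$. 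The cubic term, with its fixed negative coefficient, forces a bound on $\Omega$ at the right endpoint that does \emph{not} depend on its (possibly huge) value at $\tau$: an elementary ODE comparison gives $\max_{\mathbb{S}^{1}}\Omega(\cdot,\tau+\tfrac18 r^{4})\le C_{11}/r^{4}$ for an absolute constant $C_{11}$. Unwinding the definition of $\Omega$ using $2^{-1/4}r\le h\le\eta r$ turns this into $(\mathcal{G}/h^{3})(\cdot,\tilde K_{\tau+r^{4}/8})\le C_{12}/r^{4}$, and then (\ref{e: controling the radii}) at time $\tau$ — which yields $r^{4}\ge 4(T-\tau)/\eta^{4}\ge 4\bigl(T-\tau-\tfrac18 r^{4}\bigr)/\eta^{4}$, hence $1/r^{4}\le C_{13}/(T-\tau-\tfrac18 r^{4})$ — upgrades it to $(\mathcal{G}/h^{3})(\cdot,K_{\tau+r^{4}/8})\le C'/(T-\tau-\tfrac18 r^{4})$ with $C'$ absolute.

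Finally, just as in the last paragraph of the proof of Lemma~\ref{lem: lower bound on the speed}, the map $\tau\mapsto\tau+\tfrac18\bigl(r_{-}(\Phi_{\tau}K_{\tau})\bigr)^{4}$ is continuous, is positive at $\tau=0$, and tends to $T$ as $\tau\to T$ by Proposition~\ref{prop: area goes to zero}; hence every $t$ close enough to $T$, in particular every $t\in[T/2,T)$, is of the form $\tau+\tfrac18 r^{4}$, which gives the claimed estimate. I expect the only genuinely delicate point to be the third step: one must check that Tso's trick still produces an estimate when started at a time where $\Omega$ is already very large — which is unavoidable here since $\mathcal{G}\to\infty$ — and this is exactly what the cubic term $-\tfrac18 r^{4}\Omega^{3}$, obtained from the lower bound $\mathcal{G}\ge 4\rho^{3}\Omega$, provides. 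Everything else is bookkeeping, and all constants stay absolute because $\eta$ is the fixed constant coming from (\ref{e: limit}).
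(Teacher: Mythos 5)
Your argument is correct and arrives at the same estimate through the same essential ingredients — Tso's trick, the pinching estimate (\ref{e: controling the radii}), and the $SL(2)$-invariance of $\mathcal{G}/h^{3}$ — but it is packaged differently from the paper. The paper performs a parabolic rescaling, setting $\tilde{K}^{t^{\ast}}_t=(T-t^{\ast})^{-1/4}\Phi_{2t^{\ast}-T}K_{t^{\ast}+(T-t^{\ast})t}$ on the \emph{fixed} interval $[-1,0]$, checks via (\ref{e: controling the radii}) that the inner and outer radii of $\tilde{K}^{t^{\ast}}_t$ are pinched between absolute constants, and then simply quotes Lemma~\ref{lem: upper G} at the normalized time $t=0$ (i.e.\ after unit elapsed time), so the ``forgetting of the initial value'' is delegated to the $t^{-1/2}$ decay already proved there; the scaling $\mathcal{G}/h^{3}\mapsto(T-t^{\ast})\,\mathcal{G}/h^{3}$ under $\lambda=(T-t^{\ast})^{-1/4}$ then produces the $C'/(T-t^{\ast})$ bound. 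You instead avoid the rescaling and rerun the Tso computation directly on the raw flow, starting at a general $\tau$ on the time window $[\tau,\tau+r^{4}/8]$, carefully tracking how the threshold $\rho$, the radii bounds, the coefficient of the cubic term, and the length of the time interval all scale like powers of $r$; the decisive cubic term $-\tfrac18 r^{4}\Omega^{3}$ that you isolate is exactly the mechanism behind the $t^{-1/2}$ term inside Lemma~\ref{lem: upper G}, so the two proofs are mathematically equivalent. The paper's version is arguably cleaner because it reuses Lemma~\ref{lem: upper G} verbatim after a change of variables; yours is more hands-on and makes the scaling structure explicit. Both rely, in the final step, on the surjectivity of $\tau\mapsto\tau+\tfrac18 r_{-}(\Phi_{\tau}K_{\tau})^{4}$ onto a terminal time interval, which is the same device used in Lemma~\ref{lem: lower bound on the speed}.
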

\begin{proof}
For each fixed $t^{\ast}\in [T/2,T),$ the family of convex bodies defined by $\tilde{K}^{t^{\ast}}_t:=\frac{1}{(T-t^{\ast})^{\frac{1}{4}}}\Phi_{2t^{\ast}-T}K_{t^{\ast}+(T-t^{\ast})t}$
is a solution of (\ref{e: asli}) on the time interval $[-1,0]$, with the initial data $\tilde{K}^{t^{\ast}}_{-1}=\frac{1}{(T-t^{\ast})^{\frac{1}{4}}}\Phi_{2t^{\ast}-T}K_{2t^{\ast}-T}$. By inequalities (\ref{e: controling the radii}), at the time $t=-1$, we have
\[r_{-}(\tilde{K}^{t^{\ast}}_{-1})=\frac{r_{-}(\Phi_{2t^{\ast}-T}K_{2t^{\ast}-T})}{(T-t^{\ast})^{\frac{1}{4}}}\geq \frac{8^{\frac{1}{4}}}{\eta}\] and
\[r_{+}(\tilde{K}^{t^{\ast}}_{-1})=\frac{r_{+}(\Phi_{2t^{\ast}-T}K_{2t^{\ast}-T})}{(T-t^{\ast})^{\frac{1}{4}}}\leq \eta8^{\frac{1}{4}}.\]
From the assumption $1\leq\eta<1.5^{\frac{1}{4}}$, and again by the comparison principle, for each time $t\in[-1,0]$, we get
\[r_{-}(\tilde{K}^{t^{\ast}}_{t})\geq \left(4\left(\frac{2}{\eta^{4}}-1\right)\right)^{\frac{1}{4}}\geq
\left(\frac{4}{3}\right)^{\frac{1}{4}},\]
and
\[r_{+}(\tilde{K}^{t^{\ast}}_{t})\leq \eta8^{\frac{1}{4}}< 12^{\frac{1}{4}}.\]
These two last inequalities combined with Lemma \ref{lem: upper G} imply that the centro-affine curvature of  $\frac{1}{(T-t^{\ast})^{1/4}}\Phi_{2t^{\ast}-T}K_{t^{\ast}}=\tilde{K}^{t^{\ast}}_{0}$ is bounded by a positive constant $C'$ independent of $t^{\ast}$. So
\[\frac{\mathcal{G}}{h^{3}}(\cdot,\Phi_{2t^{\ast}-T}K_{t^{\ast}})\leq \frac{C'}{T-t^{\ast}}\Rightarrow \frac{\mathcal{G}}{h^{3}}(\cdot,K_{t^{\ast}})\leq \frac{C'}{T-t^{\ast}}.\]
Taking into account that $t^{\ast}\in [T/2,T)$ is arbitrary and $C'$ is independent of $t^{\ast}$ completes the proof.
\end{proof}
\subsection{Proof of Theorem A: $\mathcal{C}^{\infty}$ convergence of the normalized solution}\label{sec: main theorem proof}
For a fixed $t^{\ast}\in \left[\max\{3T/4,\frac{T+t_{\ast}}{2}\},T\right),$ the family of convex bodies given by $\tilde{K}^{t^{\ast}}_t=\frac{1}{(T-t^{\ast})^{\frac{1}{4}}}\Phi_{2t^{\ast}-T}K_{t^{\ast}+(T-t^{\ast})t}$
is a solution of (\ref{e: asli}) on the time interval $[-1,0]$ with the initial data $\tilde{K}^{t^{\ast}}_{-1}=\frac{1}{(T-t^{\ast})^{\frac{1}{4}}}\Phi_{2t^{\ast}-T}K_{2t^{\ast}-T}$ and with the properties
\[r_{-}(\tilde{K}^{t^{\ast}}_{t})\geq
\left(\frac{4}{3}\right)^{\frac{1}{4}},\]
and
\[r_{+}(\tilde{K}^{t^{\ast}}_{t})< (12)^{\frac{1}{4}}.\]
Since $2t^{\ast}-T\geq \max\{T/2,t_{\ast}\}$, by Lemmas \ref{lem: lower bound on the speed} and \ref{lem: upper bound on the speed} we get
\[\frac{C}{2(T-t^{\ast})}\leq \frac{\mathcal{G}}{h^3}(\cdot,2t^{\ast}-T)\leq \frac{C'}{2(T-t^{\ast})}.\]
Thus, the centro-affine curvature of $\tilde{K}^{t^{\ast}}_{-1}$ satisfies
\[\frac{C}{2}\leq \frac{\mathcal{G}}{h^3}(\cdot,\tilde{K}^{t^{\ast}}_{-1})\leq \frac{C'}{2}.\]
To prove Theorem A, we mention two basic observations contained in the proofs of Corollary \ref{cor: Convexity is preserved} and Lemma \ref{lem: upper bound for speed}.
\begin{enumerate}
  \item $\min\limits_{\mathbb{S}^{1}} \mathcal{G}/h^2(\cdot,t)$ is non-decreasing along the flow.
  \item $\max\limits_{\mathbb{S}^{1}} \mathcal{G}/h^2(\cdot,t)$ remain bounded from above provide that the inner radius has a lower bound. This follows from Tso's trick which is applying the maximum principle to the auxiliary function $\displaystyle\Omega(u,t):=\frac{\frac{1}{h^2\mathcal{S}}(u,\tilde{K}^{t^{\ast}}_t)}{h(u,\tilde{K}^{t^{\ast}}_t)-\frac{1}{2}\left(\frac{4}{3}\right)^{\frac{1}{4}}}$. Furthermore, the upper bound on the speed depends only on $\max\limits_{\mathbb{S}^1}\Phi(\cdot,0)$, outer radius of the initial convex body, and the lower bound on the inner radii of the evolving convex bodies.
\end{enumerate}
Using observations (1) and (2), we conclude, for $t\in[-1,0]$, that
\[C_1\leq \frac{\mathcal{G}}{h^2}(\cdot,\tilde{K}^{t^{\ast}}_t)\leq C_2.\]
For constants $C_1$ and $C_2$ independent of $t^{\ast}.$ These constants are independent of $t^{\ast}$ as they only depend only on $C$, and $C'$.
Hence, each body $\tilde{K}^{t^{\ast}}_t$, for $t\in[-1,0]$, satisfies
\[C_3\leq \mathcal{G}(\cdot,\tilde{K}^{t^{\ast}}_t)\leq C_4\]
for some constants $C_3$ and $C_4$ independent of $t^{\ast}.$ Therefore, by \cite{K} we conclude that there are uniform bounds on higher derivatives of the curvature of $\tilde{K}^{t^{\ast}}_t$, for $t\in[-1,0].$ In particular, the body
\[\tilde{K}^{t^{\ast}}_0=\frac{1}{(T-t^{\ast})^{\frac{1}{4}}}\Phi_{2t^{\ast}-T}K_{t^{\ast}}\]
has uniform $\mathcal{C}^{k}$ bounds independent of $t^{\ast}.$

Consequently, for every given sequence of times $\{t_k\}_k$ we can find a subsequence, denoted again by $\{t_k\}_k$, such that as $t_k\to T$ the family $\left\{\frac{1}{(T-t_k)^{\frac{1}{4}}}\Phi_{2t_k-T}K_{t_k}\right\}$ approaches in $\mathcal{C}^{\infty}$ to a smooth convex body $K_{\infty}\in\mathcal{K}^2_e$.
Furthermore, we may use the monotonicity of $V(\Gamma K_{t})/V(K_{t})$ and the discussion in Section \ref{sec: Convergence in the Hausdorff metric} to conclude that
$V(\Gamma K_{\infty})/V(K_{\infty})=(4/3\pi)^2$. That is, $K_{\infty}$ is a smooth minimizer of $V(\Gamma K)/V(K)$. So by Corollary \ref{cor: monotonicity of petty-centroid} or Theorem B, $K_{\infty}$ is an origin-centered ellipse. Finally, notice that $V(K_t)$ is comparable with $\frac{1}{(T-t)^{1/2}}$ which results in the convergence of $\sqrt{\frac{V(B)}{V(K_{t_k})}}\Phi_{2t_k-T}K_{t_k}$ in $\mathcal{C}^{\infty}$ to an origin-centered ellipse.
\begin{remark}
Let $K_0\in \mathcal{F}_{0}^n$ be a convex body with its centroid at the origin and let the family of convex bodies $\{K_t\}\in \mathcal{F}_{0}^n$ be the solution to
\begin{equation}\label{e: badali}
\left\{
  \begin{array}{ll}
    \partial_t h(u,t)=-\frac{1}{h^n\mathcal{S}}(u,t),\\
    h(\cdot,0)=h_{K_0}(\cdot).
  \end{array}
\right.
\end{equation}
Evolution equation (\ref{e: badali}) is a fully nonlinear degenerate second-order parabolic differential equation with a high degree of homogeneity. In general, dealing with such flows is technically difficult, \cite{RS,AS,BA3,BM,BX,BX1,Ch,Ch1,G1,PL,QLI,Shu1,Shu,FS,WTL}. It would be interesting to study the asymptotic behavior of (\ref{e: badali}) via the evolution equations of affinely associated bodies such as $K^{\ast}$, $\Gamma K$, and $\Lambda K$.
\end{remark}
\bibliographystyle{amsplain}

\end{document}